\newcommand{\p}{\mathbb{P}}
\DeclareMathOperator{\tr}{tr}
\DeclareMathOperator{\Bir}{Bir}
\DeclareMathOperator{\Aut}{Aut}
\DeclareMathOperator{\Sym}{Sym}
\DeclareMathOperator{\GL}{GL}
\DeclareMathOperator{\Bound}{B}
\DeclareMathOperator{\Ort}{O}
\DeclareMathOperator{\Mer}{\mathcal M}
\DeclareMathOperator{\Bim}{Bim}
\DeclareMathOperator{\Jord}{J}
\DeclareMathOperator{\Exc}{Exc}
\DeclareMathOperator{\SG}{\mathfrak{S}}
\DeclareMathOperator{\Z}{\mathcal{Z}}
\DeclareMathOperator{\E}{\mathcal{E}}
\renewcommand{\U}{\mathcal{U}}
\DeclareMathOperator{\V}{\mathcal{V}}
\renewcommand{\H}{\mathcal{H}}
\DeclareMathOperator{\N}{\mathcal{N}}
\DeclareMathOperator{\D}{\mathcal{D}}
\DeclareMathOperator{\OP}{\mathcal{O}}
\renewcommand{\S}{\mathcal{S}}
\renewcommand{\L}{\mathcal{L}}
\DeclareMathOperator{\Pcal}{\mathcal{P}}
\newcommand{\xleftrightarrow}[2][]{\ext@arrow 3359\leftrightarrowfill@{#1}{#2}}
\newcommand{\xdashrightarrow}[2][]{\ext@arrow 0359\rightarrowfill@@{#1}{#2}}
\newcommand{\xdashleftarrow}[2][]{\ext@arrow 3095\leftarrowfill@@{#1}{#2}}
\newcommand{\xdashleftrightarrow}[2][]{\ext@arrow 3359\leftrightarrowfill@@{#1}{#2}}
\def\rightarrowfill@@{\arrowfill@@\relax\relbar\rightarrow}
\def\leftarrowfill@@{\arrowfill@@\leftarrow\relbar\relax}
\def\leftrightarrowfill@@{\arrowfill@@\leftarrow\relbar\rightarrow}
\def\arrowfill@@#1#2#3#4{%
  $\m@th\thickmuskip0mu\medmuskip\thickmuskip\thinmuskip\thickmuskip
   \relax#4#1
   \xleaders\hbox{$#4#2$}\hfill
   #3$%
}
\theoremstyle{plain}
\newtheorem*{theoremA}{Theorem A}
\newtheorem*{theoremB}{Theorem B}
\newtheorem*{theoremC}{Theorem C}
\renewcommand{\a}{\mathrm{a}}
\theoremstyle{plain}
\newtheorem{theorem}{Theorem}[section]
\newtheorem{lemma}[theorem]{Lemma} 
\newtheorem{corollary}[theorem]{Corollary} 
\newtheorem{proposition}[theorem]{Proposition}
\theoremstyle{definition}
\newtheorem{definition}[theorem]{Definition} 
\newtheorem{remark}[theorem]{Remark}
\newtheorem*{question}{Question}
\newtheorem{example}[theorem]{Example}
\g@addto@macro{\endabstract}{\@setabstract}
\newcommand{\authorfootnotes}{\renewcommand\thefootnote{\@fnsymbol\c@footnote}}%
\newcommand\blfootnote[1]{%
	\begingroup
	\renewcommand\thefootnote{}\footnote{#1}%
	\addtocounter{footnote}{-1}%
	\endgroup
}
\author{Fedor Bogomolov}
\address{Courant Institute, New York University, 251 Mercer Str.
	New York, USA 10012. Also: Laboratory of Algebraic Geometry,
	National Research University HSE,
	Department of Mathematics, 6 Usacheva Str. Moscow, Russia}
\email{bogomolo@cims.nyu.edu}
\author{Nikon Kurnosov}
\address{Department of Mathematics,	University of Georgia,
	Athens, GA, USA, 30602.	Also: Laboratory of Algebraic Geometry,
	National Research University HSE,
	Department of Mathematics, 6 Usacheva Str. Moscow, Russia}
\email{nikon.kurnosov@gmail.com}
\author{Alexandra Kuznetsova}
\address{Ecole Polytechnique, CMLS,
	France, Route de Saclay, 91128 Palaiseau.
	Also: Laboratory of Algebraic Geometry,
	National Research University HSE,
	Department of Mathematics, 6 Usacheva Str. Moscow, Russia}
\email{sasha.kuznetsova.57@gmail.com}
\author{Egor Yasinsky}
\address{Universit{\"a}t Basel,
	Departement Mathematik und Informatik,
	Spiegelgasse 1,
	CH-4051, Basel,
	Switzerland}
\email{yasinskyegor@gmail.com}
\begin{document}
	
\newcommand{\ZZ}{\mathbb Z}
\newcommand{\CC}{\mathbb C}
\newcommand{\PP}{\mathbb P}
\newcommand{\Torus}{\mathbb T}
\newcommand{\Sph}{\mathbb S}
\newcommand{\Line}{\mathcal L}
\newcommand{\EEE}{\mathcal E}
\newcommand{\OOO}{\mathcal O}
\newcommand{\GGG}{\mathcal G}

\newcommand{\Base}{W_F}
\newcommand{\Poincare}{\mathcal P}
\newcommand{\id}{\mathrm{id}}
	
\begin{center}
	\LARGE 
	Geometry and automorphisms of non-K\"{a}hler holomorphic symplectic manifolds \par \bigskip\bigskip
	
	\normalsize
	\authorfootnotes
	\large Fedor Bogomolov, Nikon Kurnosov, Alexandra Kuznetsova and Egor Yasinsky \par \bigskip

\end{center}

\begin{abstract}
We consider the only one known class of non-K\"ahler irreducible holomorphic symplectic manifolds, described in the works of D.~Guan and the first author. 
Any such manifold $Q$ of dimension \mbox{$2n-2$} is obtained as a finite degree $n^2$ cover of some non-K\"ahler manifold $\Base$ which we call the {\it base} of~$Q$. 
We show that the algebraic reduction of~$Q$ and its base is the projective space of dimension~\mbox{$n-1$}. Besides, we give a partial classification of submanifolds in $Q$, 
describe the degeneracy locus of its algebraic reduction and prove that the automorphism group of $Q$ satisfies the Jordan property.
\end{abstract}

\section{Introduction}
\label{sec: introduction}
\subsection{Holomorphic symplectic manifolds}
Let\blfootnote{2020 {\it Mathematics Subject Classification}. 14J42, 53D05, 53C26, 32C15, 32C25, 32H04, 32Q99, 32M18, 14K99.} $M$ be a complex manifold of dimension $2n$. A {\it holomorphic symplectic structure} is a closed holomorphic 2-form $\omega_M$ on $M$ of maximal rank. 
One can ask when a compact holomorphic symplectic manifold is K\"{a}hler, i.e. admits a positive closed $(1,1)$-form. By the Enriques-Kodaira classification, 
in dimension two there are 3 different types of holomorphic symplectic surfaces: K3 surfaces, complex tori and Kodaira-Thurston surfaces (see below). 
Every simply connected holomorphic symplectic manifold of dimension 2 is a K3 surface, and those are known to be K\"{a}hler by the result of Siu \cite{Siu}. 
It then was conjectured by A.~Todorov that this should generalize to higher dimensions. In fact, in his MPIM preprint \cite{Todorov} Todorov claimed that every 
holomorphic symplectic manifold admits a K\"{a}hler metric. However, ten years later several counterexamples to this statement were found by D. Guan \cite{Guan1,Guan3,Guan2}. 
A more geometrically transparent construction of these manifolds was then given by the first author in \cite{Bogomolov}. 
In the present paper we call\footnote{In \cite{Kur-Ver} this class of manifolds was called Bogomolov-Guan manifolds.} them {\it BG-manifolds}. 
Their construction is sketched in Section \ref{subsec: BG construction}.

Holomorphic symplectic manifolds are closely related to {\it hyperk\"{a}hler} manifolds. Recall that a hyperk\"{a}hler manifold is a Riemannian manifold $(M,g)$ 
equipped with three K\"{a}hler complex structures $I,J,K: TM\to TM$, satisfying the quaternionic relation
\[
I^2=J^2=K^2=IJK=-\id.
\]
Any hyperk\"{a}hler manifold is holomorphically symplectic. Indeed, a simple computation shows that the form 
\[
\omega=\omega_J+\sqrt{-1}\omega_K
\]
is of type $(2,0)$ (here $\omega_*=g(*X,Y)$). Moreover, it is closed and hence holomorphic. 
Conversely, a compact holomorphically symplectic manifold is hyperk\"{a}hler, provided that it is K\"{a}hler. This follows from the Calabi-Yau theorem \cite{Yau}. 

One of the most important properties of hyperk\"ahler manifolds is the existence of the Beauville-Bogomolov-Fujiki-form (BBF-form, for short), 
that makes the second cohomology group into lattice. This fact is the consequence of the Bogomolov-Tian-Todorov theorem, 
which says that the deformation theory of K\"ahler manifolds with trivial canonical class is unobstructed. 
In the recent work \cite{Kur-Ver} it was proven that the deformation theory of BG-manifolds is similar to that of hyperk\"ahler manifolds. 
More precisely, there is a version of local Torelli theorem which allows to show that holomorphic symplectic deformations of BG-manifolds are unobstructed, 
and the corresponding period map is locally an isomorphism. Moreover, a BG-manifold $M$ of dimension $\dim_\CC M=2n$ possesses a non-K\"ahler version of the BBF-form, 
i.e. there exists a symmetric form $q$ on $H^2(M)$ such that for any $w\in H^2(M)$ one has
\[
\int_M w^{2n}=q(w,w)^n.
\]
Conjecturally, this form is also non-degenerate.

\subsection{Algebraic reduction and submanifolds of BG-manifolds} The goal of this article is to shed some light on the geometry of non-K\"{a}hler manifolds, described in the works of Guan and the first author, and also some related manifolds. As described in Section \ref{subsec: BG construction}, any BG-manifold $Q$ of dimension $2n-2$ is obtained as a finite degree $n^2$ cover of some complex non-K\"{a}hler manifold $\Base$ which we call the {\it base} of $Q$. The first main result of this article describes the algebraic reduction of $Q$ and $\Base$.

\begin{theoremA}\label{thm: main A}
	Let $n\geqslant 2$ be an integer, $Q$ be a BG-manifold of dimension $2n-2$, and $\Base$ be its base. Then $Q$ and $\Base$ have a structure of a fiber space
	\[
	\Phi: Q\to\PP^{n-1},\ \ \ \Pi: \Base\to\PP^{n-1}
	\]
	with typical fiber being an abelian variety; these maps are the algebraic reductions of $Q$ and $\Base$, respectively. In particular, the algebraic dimension of $Q$ equals $n-1$.
\end{theoremA}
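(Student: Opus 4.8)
The plan is to produce the fibration explicitly, identify its fibres, and then pin down the algebraic dimension by a two-sided estimate, the lower bound being formal and the upper bound carrying all of the content. Note first that $\dim_\CC Q = 2n-2 = 2(n-1)$, so a fibration over $\PP^{n-1}$ with $(n-1)$-dimensional torus fibres is forced to be Lagrangian for the symplectic form $\omega_Q$; this is a useful consistency check and explains why half-dimensional abelian fibres are the ones that appear. Since $Q\to\Base$ is finite of degree $n^2$, the function field $\CC(Q)$ is a finite extension of $\CC(\Base)$, hence $a(Q)=a(\Base)$; the two algebraic-reduction statements are therefore equivalent, and I would carry out the analysis on whichever of the two total spaces is more transparent, transporting the conclusion along the cover.

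First I would construct $\Phi$ and $\Pi$ from the explicit description recalled in Section~\ref{subsec: BG construction}. The building block of that construction is the tautological $\CC^\ast$-bundle $\CC^n\setminus\{0\}\to\PP^{n-1}$, on which a contracting $\ZZ$-action ($z\mapsto\lambda z$ with $|\lambda|\neq 1$) and a fibrewise lattice act; projecting to $\PP^{n-1}$ is invariant under all of these identifications and so descends to holomorphic maps $\Phi$ and $\Pi$. The generic fibre is a compact complex torus of dimension $n-1$, and I would exhibit it as an abelian variety by producing a polarization: the lattice carries a natural integral Riemann form coming from the construction, realizing the generic fibre as a (principally) polarized abelian variety. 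This already gives the lower bound $a(Q)\geqslant n-1$, since pulling back the coordinate functions of $\PP^{n-1}$ along the dominant map $\Phi$ yields $n-1$ algebraically independent meromorphic functions.

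The heart of the proof is the reverse inequality $a(Q)\leqslant n-1$, equivalently the statement that every meromorphic function on $Q$ is pulled back from $\PP^{n-1}$, i.e. is constant along the fibres. This is the delicate point, because the fibres are abelian varieties and therefore individually carry many rational functions; one must show these do \emph{not} glue into global meromorphic functions. I would lift a meromorphic function to the relevant covering space, which fibres over $\CC^n\setminus\{0\}$, use a Hartogs/Levi extension across the removed origin -- legitimate since $\{0\}$ has codimension $n\geqslant 2$ -- and then exploit invariance under the deck group. Invariance under the contracting $\ZZ$-action forces the function, after clearing the torus directions, to be a ratio of homogeneous polynomials of equal degree in the $\CC^n$-coordinates, hence to descend to $\PP^{n-1}$; invariance under the fibrewise lattice of translations kills any genuine dependence on the torus coordinates, since a translation-invariant meromorphic function on an abelian variety is constant. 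Combining the two invariances shows $\CC(Q)=\Phi^\ast\CC(\PP^{n-1})$.

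Granting this, $a(Q)=n-1=\dim\PP^{n-1}$, the fibres of $\Phi$ are connected (being tori), and $\PP^{n-1}$ is projective, so by the universal property of the algebraic reduction the map $\Phi$ \emph{is} the algebraic reduction of $Q$; the same argument on $\Base$, or transport along the finite cover, identifies $\Pi$ as the algebraic reduction of $\Base$. I expect the main obstacle to be the upper bound: making the Hartogs extension and the two invariance arguments fully rigorous in the presence of the indeterminacy locus of a meromorphic $f$ and of the degenerate fibres of $\Phi$, and in particular verifying that the translation-invariance argument is uniform enough over the base to rule out a fibrewise-nonconstant $f$. Controlling the degeneracy locus of $\Phi$ (where the fibres fail to be abelian) is presumably treated separately in the paper and would be invoked here to restrict attention to the generic, well-behaved fibres.
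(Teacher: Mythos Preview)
Your outline correctly isolates the content of the theorem in the upper bound $a(Q)\leqslant n-1$, and your observation that finiteness of $p\colon Q\to\Base$ forces $a(Q)=a(\Base)$ is exactly how the paper transports the result between the two spaces (together with connectedness of the fibres of $\Phi$, which the paper establishes separately in Proposition~\ref{lemma_fiber_of_Phi_abelian}). However, the core of your proposed argument rests on a model of the total space that is not the one in play, and this is a genuine gap rather than a matter of rigor.

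You describe $Q$ (or $\Base$) as arising from ``the tautological $\CC^*$-bundle $\CC^n\setminus\{0\}\to\PP^{n-1}$ on which a contracting $\ZZ$-action and a fibrewise lattice act'', and then propose a Hartogs extension across the origin plus invariance under these group actions. That picture describes a Hopf-type construction, not a BG-manifold. In the actual construction (Section~\ref{subsec: BG construction}) one starts from the Kodaira surface $S\to E$, forms the Hilbert scheme $S^{[n]}$, restricts to the fibre $W$ over $0\in E$ of the composite $S^{[n]}\to\Sym^n E\to E$, quotients by the diagonal $F$-action, and then takes a finite cover. There is no ambient $\CC^n\setminus\{0\}$ to extend across, no global contracting $\ZZ$-action on a covering space of $\Base$, and the abelian fibres are \emph{not} all quotients of a single $\CC^{n-1}$ by a varying lattice inside a fixed trivialization; they are (covers of) $F^{n-1}$ sitting inside a genuinely twisted family. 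Your two invariance arguments therefore have nothing to act on.

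The paper's route to the upper bound is quite different and worth knowing. It works on the auxiliary manifold $X=(\pi^n)^{-1}(A)/F$ (where $A=\ker(\epsilon\colon E^n\to E)$), which carries both the fibrewise action of $B=F^{n-1}$ and the action of $\SG_n$. One lets $B_0\subset B$ be the maximal subgroup acting trivially on $\Mer(X)$; since $B_0$ is $\SG_n$-invariant and the only $\SG_n$-invariant abelian subvarieties of $B$ are $0$ and $B$ itself (a representation-theoretic observation on $T_0F^n$), there are only two cases. If $B_0=B$ one is done. If $B_0=0$, a derivation argument shows $a(X)=\dim X$, so $X$ is Moishezon; having no rational curves it is then projective, hence K\"ahler. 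But $X$ contains the submanifold $(\pi^n)^{-1}(\{(x,-x,0,\ldots,0)\})/F$, which surjects onto a Kodaira surface and is therefore non-K\"ahler, a contradiction. Passing to $\SG_n$-invariants yields $\Mer(\Base)\cong\Mer(A)^{\SG_n}\cong\Mer(\PP^{n-1})$. The non-K\"ahlerness of $S$ is thus used in an essential and indirect way, not through any explicit uniformization.
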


The construction of BG-manifolds actually reminds the construction of (hyperk\"ahler) generalized Kummer varieties, but starts with the Kodaira-Thurston surface, rather than a complex torus. The idea to start with Kodaira surface to produce new examples of irreducible holomorphic manifolds is based on the fact that Kodaira surface is one of five possible complex surfaces with Kodaira dimension zero together with K3 surfaces, abelian surfaces, Enriques surfaces and hyperelliptic surfaces \cite[VI]{BPV}, and it is the only non-K\"ahler one among those. If $S$ is a Kodaira-Thurston surface, then its algebraic reduction
\[
\pi: S\to E
\]
is a principal elliptic fibration over an elliptic curve $E$. It is well known that all algebraic submanifolds of $S$ are contained in the fibers of $\pi$. The second main result of this paper gives some information about the properties of submanifolds in BG-manifolds (again using the algebraic reduction $\Phi$ of $Q$). A somewhat more precise description of their geometry can be extracted from Proposition \ref{prop: submanifolds of Q} and Theorem \ref{thm_classification_for_Sn/F}.

\begin{theoremB}\label{thm: submanifolds of BG}
Let $Q$ be a BG-manifold and $X\subset Q$ be its submanifold. Set $Z = \Phi(X)$.
\begin{enumerate}
			\item If $Z$ is a point, then $X$ is a subvariety of the fiber $\Phi^{-1}(Z)$ which is algebraic for a general point $Z$ and Moishezon for others.
			\item If $Z$ is a smooth curve, then $X$ can be Moishezon or non-Moishezon, depending on $Z$ (see Proposition~\ref{prop: submanifolds of Q} for a more precise statement).
			\item If $\dim(Z) \geqslant 2$, then $X$ is not Moishezon.
		\end{enumerate} 
\end{theoremB}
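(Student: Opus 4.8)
The plan is to combine Theorem~A with the holomorphic symplectic structure of $Q$, and to reduce the whole trichotomy to a single question: whether the relative polarization of an abelian-variety family extends over its base inside $\PP^{n-1}$. First I would record two preliminaries. Since $X$ is compact and $\Phi$ is meromorphic, $Z=\Phi(X)$ is a closed analytic subset of $\PP^{n-1}$, hence by Chow's theorem a projective variety; write $d=\dim Z$. Next, because $\dim\Phi^{-1}(z)=(2n-2)-(n-1)=\tfrac12\dim Q$ and the general fibre is an abelian variety (Theorem~A), a rank computation in the spirit of Matsushita's theorem — using that functions pulled back along $\Phi$ are constant on fibres — shows the general fibre is isotropic for $\omega_Q$, so that $\Phi$ is a Lagrangian fibration. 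Restricting meromorphic functions gives $(\Phi|_X)^*\Mer(Z)\subseteq\Mer(X)$, so $\a(X)\geqslant d$; and since $\Mer(Q)$ is entirely pulled back from $\PP^{n-1}$, the only functions coming from $Q$ are constant on the fibres of $\Phi|_X$. Thus $X$ is Moishezon, i.e. $\a(X)=\dim X$, precisely when the fibrewise meromorphic functions (which exist wherever the fibres are abelian) glue to global ones, equivalently when the abelian-variety family $\Phi|_X\colon X\to Z$ is (bimeromorphic to) a projective family.

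With this reformulation the three cases become three behaviours of relative polarizations. For~(1), $d=0$, so $X$ lies in a single fibre. For a general point $Z$ that fibre is an abelian variety, hence projective, and every submanifold of it is algebraic. For a special $Z$ the fibre is a non-algebraic complex torus; here I would use the explicit description of the fibres from Proposition~\ref{prop: submanifolds of Q} together with Ueno's structure theory of subvarieties of complex tori to see that such $X$ still carry enough meromorphic functions to be Moishezon, though not algebraic. For~(2), $d=1$, the family $X\to Z$ lives over a curve, where a fibrewise polarization may or may not extend to the total space; correspondingly $X$ is Moishezon or not. I would exhibit both behaviours directly from the construction — this is the content of Proposition~\ref{prop: submanifolds of Q} and Theorem~\ref{thm_classification_for_Sn/F} — the non-Moishezon curves being those that inherit the irrational period already making the Kodaira--Thurston surface $S$ non-algebraic, the Moishezon ones occurring over special curves where this period becomes algebraic.

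The main obstacle is case~(3), the uniform non-Moishezon statement for $d\geqslant 2$, and this is where I expect to work hardest. Suppose for contradiction that $X$ is Moishezon; then by the reformulation the family $\Phi|_X\colon X\to Z$ is projective, so after resolving $Z$ it admits a relative ample class extending a fibrewise polarization to a global class on $X$ (one should first check that $Z$ is not contained in the degeneracy locus, reducing the remaining case to the torus structure of the fibres). The plan is to show that such an extension forces the restriction of the torus fibration $\Phi$ to $Z$ to be an algebraic family, contradicting the non-K\"ahler period of $Q$. Concretely I would read off the period of $\Phi|_X$ from the symplectic form: since the fibres are Lagrangian, $\omega_Q$ identifies the vertical tangent bundle with $(\Phi|_X)^*\Omega^1_Z$, so the period of the family is encoded by a class built from $\omega_Q$ over $Z$, and a global polarization would make its transcendental part vanish. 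The crux is that the obstruction to this vanishing is a nonzero cohomology class on every $Z$ of dimension $\geqslant 2$ — the very class that obstructs algebraicity of $Q$ itself — whereas over a curve the relevant group is too small to detect it, which is exactly why case~(2) behaves differently. Proving that this period obstruction is nonzero for all subvarieties of dimension $\geqslant 2$, using the explicit description of $Q$ and its base, is the technical heart; once it is in place, cases~(1)--(3) follow from the single dichotomy above.
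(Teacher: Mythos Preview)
Your overall framework --- reducing to whether the abelian fibration $\Phi|_X\colon X\to Z$ is ``projective enough'' --- is thematically appealing, but it diverges sharply from the paper's argument and, more importantly, leaves the decisive step in case~(3) unproved.

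\medskip
\textbf{Case (3): the gap.} You reduce non-Moishezon-ness for $\dim Z\geqslant 2$ to a ``period obstruction'' on $Z$ coming from the symplectic form, and then assert that this obstruction is nonzero for every such $Z$. But you never identify the cohomology group in which this class lives, nor do you give any mechanism for proving its nonvanishing on an \emph{arbitrary} subvariety $Z\subset\PP^{n-1}$ of dimension $\geqslant 2$. The Matsushita-style identification of the vertical tangent bundle with $\Phi^*\Omega^1$ is a statement about hyperk\"ahler (hence K\"ahler) manifolds; invoking it here for a non-K\"ahler $Q$ requires a separate justification you do not supply. More seriously, even granting a Lagrangian structure, your claimed dichotomy ``$X$ Moishezon $\Leftrightarrow$ the family $X\to Z$ is bimeromorphic to a projective family'' is not a theorem: a Moishezon total space over a projective base with Moishezon fibres need not arise this way, and conversely the passage from a fibrewise polarization to a global meromorphic function is not automatic. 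So the ``single dichotomy'' you want to lean on is itself unestablished, and the period computation that would resolve it is left as a black box.

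\medskip
\textbf{Case (1): a factual slip.} The special fibres of $\Phi$ are \emph{not} non-algebraic complex tori. Over a point of the discriminant $D$ the fibre of $\Pi$ is (bimeromorphic to) a product $F^{[k_1]}\times\cdots\times F^{[k_l]}$ with some $k_i\geqslant 2$; these are projective varieties carrying projective-space factors, not tori. So Ueno's structure theory for subvarieties of tori is not the right tool here.

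\medskip
\textbf{What the paper actually does.} The paper's proof never touches the symplectic form. It introduces the classes $\N_k$ of complex spaces containing a subspace $M$ with $\dim M-\a(M)\geqslant k$, and then transports $X$ along the diagram
\[
Q\xrightarrow{p}W_F\hookrightarrow S^{[n]}/F\xrightarrow{\delta_F}(\Sym^n S)/F\xleftarrow{\alpha_F}S^n/F
\]
to a submanifold $\widetilde X\subset S^n/F$. The key step for $\dim Z\geqslant 2$ is entirely elementary: choose two indices $i,j$ so that $(p_i\times p_j)(Z)=E^2$, project $\widetilde X$ to $S^2/F$, and observe that since $\pi^2_F\colon S^2/F\to E^2$ is the algebraic reduction, every divisor in $S^2/F$ is the preimage of a divisor in $E^2$; hence a subspace surjecting onto $E^2$ must be all of $S^2/F$, which lies in $\N_1$. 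Pulling back along a surjection keeps one in $\N_1$, so $X\in\N_1$ and is therefore not Moishezon. For curves (case~(2)) the paper rewrites the fibre over $Z$ as a fibred product of Kodaira-type manifolds $M_{\L_i}$ and shows explicitly, via $c_1$ of the line bundles $(p_i|_Z)^*\L$, when the quotient by $F$ is K\"ahler, algebraic, or in $\N_1$. This is concrete and avoids any appeal to periods or Lagrangian geometry.

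\medskip
In short: your proposal is a genuinely different strategy, but the crucial nonvanishing in case~(3) is asserted rather than proved, and the symplectic machinery you invoke is not known to apply in the non-K\"ahler setting. The paper's route through $\N_k$ and explicit projections to $S^2/F$ is both simpler and complete.
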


\subsection{Automorphisms}
Given a complex manifold $X$, one is often interested in the properties of its group of biholomorphic or bimeromorphic transformations, denoted in this article by $\Aut(X)$ and $\Bim(X)$ respectively (or $\Bir(X)$ for $X$ algebraic). In general, these groups may have a  very complicated structure and hence one can try to study their finite subgroups instead. In fact, it often happens that one can observe some kind of boundedness on this level. Following \cite{Pop2}, we say that a group $\Gamma$ is {\it Jordan} 
if there exists a constant $\Jord=\Jord(\Gamma)\in\ZZ_{>0}$ such that every finite subgroup $G\subset\Gamma$ has a normal abelian subgroup $A\subset G$ with $[G:A]\leqslant \Jord$. The minimal such $\Jord$ is called the {\it Jordan constant} of $\Gamma$ and is denoted $\Jord(\Gamma)$. 
A classical theorem due to Camille Jordan says that $\GL_n(\CC)$ enjoys this property, hence the name. 

In recent years, the property of being Jordan has been investigated in several different contexts, and many groups of geometric nature were observed to satisfy it, for example: diffeomorphism groups of some smooth manifolds \cite{Mun-Diff, Mun-4folds,Mun-Ham,Mun-Diff,Zim}; groups of birational selfmaps of rationally connected \cite{PS-Cremona,Birkar} and of non-uniruled \cite{PS-Jordan} algebraic varieties; in fact, all {\it biregular} automorphism groups of complex {\it projective} varieties \cite{Meng} and even just K\"{a}hler ones \cite{Kim}. A somewhat sporadic example is the one of three-dimensional Moishezon compact complex spaces \cite{PS-Moish}. 

\begin{remark}\label{rem: Zarhin-Popov}
	There is a complete classification of complex projective surfaces and threefolds whose groups of birational transformations are not Jordan. It was proved by V.~Popov that $\Bir(X)$ is Jordan for all algebraic surfaces which are not birational to $\PP^1\times E$, where $E$ is an elliptic curve \cite{Pop2}. Then Yu. Zarhin showed that $\Bir(A\times Z)$ is not a Jordan for any abelian variety $A$ and rational variety $Z$ \cite{Zarh}. For threefold case see \cite{PS-3folds}.
\end{remark}

The aforementioned result of Popov and Zarhin can be actually generalized to all complex compact surfaces, including non-K\"ahler ones (e.g. Kodaira-Thurston surface), see \cite{PS-Compact}. However, essentially nothing is known about the Jordan property of higher dimensional non-K\"ahler manifolds. In this paper we take an attempt to get some improvements in that direction and establish the following (for definitions used in the statement see Section \ref{sec: auto}):

\begin{theoremC}\label{main thm: Jordan}
	Let $Q$ be a BG-manifold of dimension $2n-2$. Then
	\begin{enumerate}
		\item The group $\Aut(Q)$ of its biholomorphic automorphisms is (strongly) Jordan.
		\item Denoting by $\Bim(Q)_{\Phi}\subset\Bim(Q)$ the subgroup of those bimeromorphic transformations which are fiberwise with respect to the algebraic reduction $\Phi$ of $Q$, we have a short exact sequence
		\[
		1\to\Bim(Q)_{\Phi}\to\Bim(Q)\to\Delta\to 1,
		\] 
		where $\Delta$ is a subgroup of the Cremona group $\Bir(\PP^{n-1})$, and the group $\Bim(Q)_{\Phi}$ is (strongly) Jordan. The group $\Bim(Q)$ is quasi-bounded.
	\end{enumerate}
\end{theoremC}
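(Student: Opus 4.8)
The plan is to let the algebraic reduction $\Phi$ of Theorem~\ref{thm: main A} organize every assertion, reducing each to the abelian-variety geometry of the fibers together with the action on the base $\PP^{n-1}$. First I would record that, being \emph{the} algebraic reduction, $\Phi$ is canonical: any $\phi\in\Aut(Q)$ carries general fibers to general fibers, so $\Phi\circ\phi$ is a morphism contracting the general fibers of the proper morphism $\Phi$, and the rigidity lemma produces a biregular descent $\rho(\phi)\in\Aut(\PP^{n-1})=\PGL_n(\CC)$. For a merely bimeromorphic $f$ one gets only $\bar f\in\Bir(\PP^{n-1})$. This yields homomorphisms $\rho\colon\Aut(Q)\to\PGL_n(\CC)$ and $\Bim(Q)\to\Bir(\PP^{n-1})$, the latter with image the asserted $\Delta$ and kernel exactly the fiberwise group $\Bim(Q)_{\Phi}$; this already furnishes the short exact sequence of part~(2).

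Next I would analyze the fiberwise groups. The subgroup $\Bim(Q)_{\Phi}$ is the group of birational self-maps of the generic fiber $A_{\eta}$ of $\Phi$ over $K=\CC(\PP^{n-1})$, a torsor under an abelian variety of dimension $n-1$. Since rational maps to abelian varieties are morphisms, every such self-map is biregular; recording its action on $H_1(A_{\eta})\cong\ZZ^{2n-2}$ gives a homomorphism to $\GL_{2n-2}(\ZZ)$ whose kernel consists of translations and is abelian. By Minkowski's bound on finite subgroups of $\GL_{2n-2}(\ZZ)$, any finite subgroup has abelian normal intersection with this kernel of index at most a constant $M=M(2n-2)$, so $\Jord(\Bim(Q)_{\Phi})\leqslant M$; as the abelian part is torsion of rank $\leqslant 2(n-1)$ it is strongly Jordan. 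The identical mechanism, with biholomorphic affine maps of the smooth fibers in place of birational ones, shows that $\ker\rho$ is strongly Jordan as well.

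For part~(1) I would \emph{not} use the extension $1\to\ker\rho\to\Aut(Q)\to\rho(\Aut(Q))\to 1$, since $\rho(\Aut(Q))\subseteq\PGL_n(\CC)$ is only Jordan and not bounded, and a Jordan-by-Jordan extension need not be Jordan (this is precisely the Zarhin obstruction making $\Bir(A\times\PP^1)$ non-Jordan). Instead I use that $\Aut(Q)$ preserves the non-K\"ahler BBF-form $q$, giving $\eta\colon\Aut(Q)\to\Ort(H^2(Q,\ZZ),q)$, whose image is bounded by Minkowski. Because $Q$ is irreducible holomorphic symplectic, $H^0(Q,T_Q)\cong H^0(Q,\Omega^1_Q)$ and a short Cartan--Poisson computation (using $h^{2,0}=1$ and $H^0(Q,\OOO_Q)=\CC$) gives $[v,w]=c_w v-c_v w$, where $c_{\bullet}$ records the scaling of $\omega$; hence $H^0(Q,T_Q)$ is solvable and $\Aut^0(Q)$ is a connected solvable complex Lie group, whose finite subgroups are abelian of bounded rank. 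Thus $\Aut(Q)$ would be strongly Jordan (with $\Jord$ the Minkowski bound) as soon as $\Aut(Q)/\Aut^0(Q)$ is bounded, i.e. $\ker\eta/\Aut^0(Q)$ is finite. This finiteness is the main obstacle: it is a Torelli-type statement, and I would attack it by showing that an automorphism acting trivially on $H^2$ preserves $\Phi^{*}\OOO_{\PP^{n-1}}(1)$ and acts trivially on the fibers' cohomology, hence is a fiberwise translation, and that the fiberwise translations defined over all of $\PP^{n-1}$ form a finite extension of $\Aut^0(Q)$.

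Finally, for part~(2) it remains to combine the exact sequence with its flanks: $\Bim(Q)_{\Phi}$ is strongly Jordan (paragraph two) and $\Delta\subseteq\Bir(\PP^{n-1})$ is Jordan by the theorem of Prokhorov--Shramov and Birkar that the Cremona group is Jordan in every dimension. One cannot upgrade this to Jordan-ness of $\Bim(Q)$: the extension realizes Heisenberg-type finite subgroups assembled from torsion translations in the fibers and transformations of the base, exactly the Zarhin phenomenon. What survives is a bound on ranks rather than on indices, namely that every finite subgroup of $\Bim(Q)$ has a normal subgroup of bounded index that is nilpotent of class at most two with commutator and abelianization of rank $\leqslant 2(n-1)$. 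Checking that this matches the definition of quasi-boundedness of Section~\ref{sec: auto} finishes the argument; I expect the only delicate bookkeeping here to be tracking these ranks through the extension, the genuine conceptual obstacle being confined to the Torelli-type finiteness of paragraph three.
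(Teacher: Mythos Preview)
Your treatment of part~(2) is essentially the paper's argument: the exact sequence comes from $\Phi$ being the algebraic reduction, and the strong Jordan property of $\Bim(Q)_{\Phi}$ follows from embedding finite subgroups into $\Aut(F)$ for a general abelian fiber $F$ (the paper packages this via Lemma~\ref{lem: PS} rather than the generic-fiber language, but the content is identical). For quasi-boundedness the paper is more direct than your nilpotent-of-class-two analysis: since a finite abelian $G\subset\Bim(Q)$ sits in an extension with abelian kernel in $\Aut(F)$ (rank $\leqslant 2(n-1)$) and abelian image in $\Bir(\PP^{n-1})$ (bounded rank by \cite[Remark 6.9]{PS-Jordan}), one gets the rank bound immediately.

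Your approach to part~(1), however, has a genuine gap. You correctly observe that the extension $1\to\ker\rho\to\Aut(Q)\to\rho(\Aut(Q))\to 1$ is a priori only Jordan-by-Jordan, and you try to repair this via the BBF form and a Torelli-type finiteness of $\ker\eta/\Aut^0(Q)$. But (a) the non-K\"ahler BBF form on $H^2(Q)$ is only \emph{conjecturally} non-degenerate, so the orthogonal group you map to need not be a lattice group subject to Minkowski; and (b) the Torelli-type finiteness you need is, as you yourself note, unproven---and there is no available substitute for it in the non-K\"ahler setting. So the argument does not close.

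The paper sidesteps all of this by showing that in fact $\rho(\Aut(Q))$ is \emph{finite}, not merely Jordan. The point is that a biregular automorphism must send fibers birational to abelian varieties to fibers of the same type; by Lemma~\ref{lemma_fibers_of_WF} this happens exactly over the complement of the discriminant divisor $D\subset\PP^{n-1}$, so $\rho(\Aut(Q))\subset\Aut(\PP^{n-1},D)$. The crucial geometric input (Section~\ref{sec: degenerate fibers}) is that $D$, viewed as the dual variety of $E\subset\PP^{n-1}$, has exactly $n^2$ points of maximal multiplicity (corresponding to $n$-torsion of $E$) and these span $\PP^{n-1}$; hence $\Aut(\PP^{n-1},D)$ embeds in a permutation group and is finite. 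An extension of a finite group by a strongly Jordan group is strongly Jordan, and part~(1) follows. This is the idea you are missing: rather than controlling $\Aut(Q)$ cohomologically, one controls the induced action on the base by the geometry of the degenerate fibers.
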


However the following question remains open (see also Remark \ref{rem: boundednes of Aut}):

\begin{question}
	Does $\Bim(Q)$ satisfy the Jordan property? 
\end{question}

This paper is organized as follow. In Section \ref{sec: preliminaries} we list some well-known definitions and properties related to holomorphic symplectic and complex analytic manifolds. In Section \ref{sec: BG} we recall the construction of BG-manifolds (mostly following the approach of the first author). Every such manifold $Q$ is a finite cover of some manifold $\Base$ called the {\it base} of $Q$. In Section \ref{sec: W/F} we prove Theorem A, computing the algebraic reduction of $\Base$, and then of $Q$ itself. Section \ref{sec: class N} is auxiliary and introduces some useful filtration on the set of all complex spaces, that we shall use in the sequel. Section \ref{sec: submanifolds of BG} is devoted to description of submanifolds of $Q$ and the proof of Theorem B. Finally, in Section \ref{sec: degenerate fibers} we describe the discriminant locus of the algebraic reduction of $Q$ and its degenerate fibers. This allows us to establish the Jordan property of $\Aut(Q)$ in Section \ref{sec: auto}, proving Theorem C.

\hfill

{\bf Acknowledgement.} F. B. is partially supported by EPSRC programme grant EP/M024830. 
F. B. and N. K.  are partially supported by the HSE University Basic Research Program, Russian Academic Excellence Project '5-100' .
N. K. and A. K. are supported by the "BASIS" foundation,  by the Simons Travel grant. E. Y. acknowledges support by the Swiss 
National Science Foundation Grant ``Birational transformations of threefolds'' 200020\_178807. We are
grateful to Dmitry Kaledin and Vasya Rogov for important observations and helpful remarks; A. K. is also thankful to the
organizers and attendees of the Workshop on Complex and Algebraic Geometry held in Moscow (March 18, 2020) for useful comments and remarks.

\section{Preliminaries}\label{sec: preliminaries}

In this section we briefly recall some main definitions and basic facts about complex manifolds and their bimeromorphic geometry. 

\subsection{Holomorphic symplectic manifolds}

\begin{definition}
	A {\it holomorphic symplectic manifold} is a complex manifold $M$ equipped with a non-degenerate holomorphic $(2,0)$-form $\omega_M$.
\end{definition}

In dimension 2, we have 3 types of holomorphic symplectic surfaces: complex tori, K3 surfaces and Kodaira-Thurston surfaces. 
The latter is of central importance for the present paper. We recall its main properties in Section \ref{subsec: Kodaira}.

\begin{definition}
	A {\it hyperk\"{a}hler manifold} is a Riemannian manifold $(M,g)$ equipped with three K\"{a}hler complex structures $I,J,K: TM\to TM$, satisfying the quaternionic relation
	\[
	I^2=J^2=K^2=IJK=-\id.
	\]
\end{definition}

Since we do not assume our manifolds to be K\"{a}hler, in this paper we have to distinguish between the terms ``hyperk\"{a}hler'' and ``holomorphic symplectic'', which are often used as synonyms in the literature. Any hyperk\"{a}hler manifold is automatically holomorphic symplectic, because
\[
\omega=\omega_J+\sqrt{-1}\omega_K
\]
is a holomorphic symplectic form on $(M,I)$. Now recall the fundamental
\begin{theorem}[Calabi-Yau Theorem {\cite{Yau}}]
	Let $M$ be a compact holomorphic symplectic K\"{a}hler manifold. Then $M$ admits a hyperk\"{a}hler metric, which is uniquely determined by the cohomology class of its K\"{a}hler form. 
\end{theorem}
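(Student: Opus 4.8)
The plan is to deduce the assertion from Yau's resolution of the Calabi conjecture, after first checking that the hypotheses place us in the Calabi--Yau situation, and then to recover the full quaternionic structure from the symplectic form.

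First I would note that $\omega_M$ trivialises the canonical bundle. Since $M$ has complex dimension $2n$ and $\omega_M$ is of maximal rank, the $(2n,0)$-form $\omega_M^{\wedge n}$ is nowhere vanishing, whence $K_M\cong\OOO_M$ and in particular $c_1(M)=0$ in $H^2(M,\mathbb R)$. Thus $M$ is a compact K\"ahler manifold with vanishing first Chern class, which is precisely the setting of the Calabi conjecture.

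The analytic core is Yau's theorem: for any K\"ahler form $\omega$ on $M$ and any closed real $(1,1)$-form $\rho$ representing $2\pi c_1(M)$, there is a unique K\"ahler form $\tilde\omega$ cohomologous to $\omega$ whose Ricci form equals $\rho$. Taking $\rho=0$ yields a Ricci-flat K\"ahler metric in the prescribed class $[\omega]$. Concretely one writes $\tilde\omega=\omega+\sqrt{-1}\,\partial\bar\partial\varphi$ and reduces Ricci-flatness to the complex Monge--Amp\`ere equation $(\omega+\sqrt{-1}\,\partial\bar\partial\varphi)^{2n}=e^{f}\omega^{2n}$, where $f$ is fixed from the Ricci potential via the $\partial\bar\partial$-lemma. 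I would solve this by the continuity method; the crux is the a priori estimates, namely the zeroth-order bound and, most delicately, the second-order estimate controlling $\Delta\varphi$. Once these are available, Evans--Krylov together with Schauder theory upgrade the solution to $C^\infty$, and uniqueness follows from a maximum-principle argument. This PDE step is by far the main obstacle: the estimates are global and nonlinear, and there is no shortcut around them.

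Finally I would promote this Ricci-flat K\"ahler metric to a hyperk\"ahler one. A compact Ricci-flat K\"ahler manifold of complex dimension $2n$ has restricted holonomy contained in $\mathrm{SU}(2n)$; moreover a Bochner argument shows that every holomorphic form on it is parallel, so $\omega_M$ is $\nabla$-parallel. Its non-degeneracy then produces a parallel endomorphism $J$ of $TM$ with $J^2=-\id$, anticommuting with the given complex structure $I$ and compatible with the metric $g$; setting $K=IJ$ gives a parallel triple $(I,J,K)$ satisfying the quaternionic relations, which is exactly a hyperk\"ahler structure and forces the holonomy into $\mathrm{Sp}(n)$. The uniqueness of the metric within its K\"ahler class is inherited from the uniqueness in Yau's theorem, which completes the proof.
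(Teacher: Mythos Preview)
Your sketch is correct and follows the standard route: trivialise $K_M$ via $\omega_M^{\wedge n}$, invoke Yau's solution of the Calabi conjecture to obtain a Ricci-flat K\"ahler metric in the given class, then use the Bochner principle to make $\omega_M$ parallel and extract the quaternionic triple $(I,J,K)$, with uniqueness inherited from the Monge--Amp\`ere uniqueness. There is nothing to fault in the logic.

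That said, there is no comparison to make with the paper: the paper does not prove this statement at all. It is quoted as a classical result with a citation to \cite{Yau} and no argument is given; the theorem serves only as background motivation for distinguishing ``hyperk\"ahler'' from ``holomorphic symplectic'' in the non-K\"ahler setting. So your proposal is not a different approach from the paper's --- it is simply \emph{more} than the paper offers, since the authors treat the result as a black box.
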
 

Therefore, every {\it K\"{a}hler} holomorphic symplectic manifold is actually hyperk\"{a}hler.

\begin{remark}
	A hyperk\"ahler manifold $M$ is called {\it of maximal holonomy}, or {\it simple}, or {\it irreducible holomorphic symplectic (IHS)}, if~\mbox{$\pi_1(M)=0$}, $H^{2,0}(M)=\CC \left\langle \omega \right\rangle $. Simple hyperk\"ahler manifolds are building blocks of all compact hyperk\"ahler manifolds: according to Bogomolov's decomposition theorem, any hyperk\"ahler manifold admits a finite covering
	which is a product of a torus and several maximal holonomy hyperk\"ahler manifolds. The maximal holonomy hyperk\"ahler components of this decomposition are defined uniquely.
\end{remark}

At the moment, there are several known examples of simple hyperk\"{a}hler manifolds: two sporadic O'Grady examples in dimensions six and ten, Hilbert schemes $Z^{[n]}$ of 0-dimensional closed subschemes of length $n$ of a K3 surface $Z$, and generalized {\it Kummer varieties}, i.e. the kernels of the composition 
\[
T^{[n]}\to\Sym^n{T}\overset{\Sigma}{\to} T,
\]
where $T$ is a complex torus and $\Sigma$ is the sum morphism. As we will see in Section \ref{subsec: BG construction}, the construction of BG-manifolds actually reminds that of generalized Kummer varieties 
(but starts with the Kodaira-Thurston surface, rather than a complex torus).

\subsection{Complex spaces and meromorphic maps}

Our main references for this paragraph are \cite{GPR} and \cite{Ueno}. In this paper, all complex spaces are assumed to be reduced and irreducible and compact, if not stated otherwise. A {\it complex manifold} is a smooth complex space. The term ``variety'' is reserved for algebraic ones (which differs from Ueno's book). 

\begin{definition}\label{def: fiber spaces} Let $X$ and $Y$ be complex spaces.
	\begin{itemize}
		\item A {\it fiber space} is a proper surjective morphism $f:X\to Y$ with irreducible general fiber.
		\item A morphism $f:X\to Y$ is called a {\it (proper) modification}, if $f$ is proper, surjective, and there exist closed analytic subsets $Z_X\subsetneq X$ and $Z_Y\subsetneq Y$ such that $f$ induces an isomorphism $X\setminus Z_X\cong Y\setminus Z_Y$. 
		\item A {\it meromorphic map} $f:X\dashrightarrow Y$ is a map $X\to 2^Y$ such that its graph $\Gamma_f$ is an irreducible closed analytic subset of $X\times Y$, and the projection $p_X: \Gamma_f\to X$ is a proper modification. A meromorphic map is called {\it bimeromorphic} if the projection $p_Y: \Gamma_f\to Y$ is a modification as well. 
		\item A {\it meromorphic fiber space} is a meromorphic map $f: X\dashrightarrow Y$ such that $p_Y$ is a fiber space.
	\end{itemize}
	
\end{definition}

The group of all bimeromorphic maps $X\dashrightarrow X$ will be denoted $\Bim(X)$. 

\begin{remark}
	By GAGA principle, for a smooth complex projective variety $X$ one has $\Bim(X)=\Bir(X)$, the group of birational automorphisms of $X$.
\end{remark}

Given a complex space $X$, its field of meromorphic functions will be denoted $\Mer(X)$. This is a finitely generated extension over $\CC$, satisfying
\begin{equation}\label{eq: alg dimension bound}
0\leqslant\tr\deg_\CC\Mer(X)\leqslant\dim(X),
\end{equation}
see \cite[Theorem 3.1]{Ueno}. The integer number $\a(X)=\tr\deg_{\CC}\Mer(X)$ is called the {\it algebraic dimension} of $X$. Further, $X$ is said to be a {\it Moishezon space} if $\a(X)=\dim(X)$ \cite{M}.

\begin{remark}
	Any bimeromorphic map $f: X\dashrightarrow Y$ induces an isomorphism of the fields of meromorphic functions
	\[
	f^*: \Mer(Y)\overset{\raisebox{0.25ex}{$\sim\hspace{0.2ex}$}}{\smash{\longrightarrow}}\Mer(X),
	\]
	so $\a(X)$ is a bimeromorphic invariant. The converse does not hold in general, but holds for Moishezon spaces \cite[VII, Corollary 6.8]{GPR}.
\end{remark}

\begin{remark}\label{rem: Artin}
	By Artin's theorem, every Moishezon space carries an algebraic space structure.
\end{remark}

Let us also mention the behavior of the algebraic dimension under some maps.
	\begin{lemma}[{\cite[Theorem 3.8]{Ueno}}]\label{lem: alg dimension properties}
		Let $f:X\dashrightarrow Y$ be a surjective meromorphic map of irreducible compact complex spaces. Set
		\[
		\a(f)=\inf_{y\in Y}\a(f^{-1}(y)),\ \ \dim f=\dim X-\dim Y.
		\]
		Then one has
		\[
		\a(Y)\leqslant \a(X)\leqslant \a(Y)+\a(f)\leqslant \a(Y)+\dim f.
		\]
	\end{lemma}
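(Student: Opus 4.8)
The plan is to establish the chain from the outside inward, disposing of the two outer inequalities quickly and concentrating the genuine work on the middle one. Since $f$ is surjective, pullback of meromorphic functions defines an injective homomorphism of fields $f^*\colon\Mer(Y)\hookrightarrow\Mer(X)$; monotonicity of transcendence degree under field extensions then gives $\a(Y)\leqslant\a(X)$ immediately. For the rightmost inequality it suffices to show $\a(f)\leqslant\dim f$. Applying the elementary bound (\ref{eq: alg dimension bound}) to a single general fiber $F_{y_0}=f^{-1}(y_0)$, whose dimension equals $\dim X-\dim Y=\dim f$ because $f$ is a fiber space, gives $\a(f)\leqslant\a(F_{y_0})\leqslant\dim F_{y_0}=\dim f$.

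The heart of the matter is the inequality $\a(X)\leqslant\a(Y)+\a(f)$. First I would reduce to the case of a genuine morphism: replacing $X$ by the graph $\Gamma_f\subset X\times Y$ leaves $\a$ unchanged, since $p_X\colon\Gamma_f\to X$ is a proper modification and $\a$ is a bimeromorphic invariant, while the second projection $\Gamma_f\to Y$ is a surjective holomorphic fiber space whose general fibers are bimeromorphic to those of $f$. Assuming now $f\colon X\to Y$ is holomorphic, I identify $\Mer(Y)$ with $f^*\Mer(Y)\subset\Mer(X)$ and use the tower law $\a(X)=\a(Y)+\tr\deg_{\Mer(Y)}\Mer(X)$. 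It then remains to bound the relative transcendence degree $m:=\tr\deg_{\Mer(Y)}\Mer(X)$ by $\a(f)$. To this end I pick $\phi_1,\dots,\phi_m\in\Mer(X)$ algebraically independent over $f^*\Mer(Y)$ and restrict them to fibers $F_y$, aiming to prove that for a general $y$ the restrictions $\phi_1|_{F_y},\dots,\phi_m|_{F_y}$ stay algebraically independent over $\CC$, which yields $m\leqslant\a(F_y)$ for general $y$.

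The hard part is precisely this preservation of algebraic independence under restriction. One must rule out that the restricted functions satisfy a nontrivial polynomial relation over $\CC$ for all $y$ in a set not contained in a proper analytic subset. The natural mechanism is a spreading-out/specialization argument: if such relations persisted over a generic $y$, their coefficients could be assembled into meromorphic functions on $Y$, producing an algebraic relation among $\phi_1,\dots,\phi_m$ over $f^*\Mer(Y)$ and contradicting their choice. A second, more delicate point is the passage from the bound ``$m\leqslant\a(F_y)$ for general $y$'' to the bound with the infimum ``$m\leqslant\inf_{y}\a(F_y)=\a(f)$''; here I would invoke the lower semicontinuity of the algebraic dimension of the fibers — the algebraic dimension can only jump \emph{up} on the thin special locus, where fibers acquire extra meromorphic functions — so that the general fiber in fact realizes the infimum. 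I expect the entire difficulty to concentrate in these two steps; the tower law, the bimeromorphic invariance of $\a$, and the two outer inequalities are formal.
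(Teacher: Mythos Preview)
The paper does not prove this lemma; it is quoted from \cite[Theorem 3.8]{Ueno} and stated without proof, so there is no argument in the paper to compare your proposal against. Your outline is essentially the standard route taken in Ueno's book: reduce to a holomorphic map via the graph, use the tower formula for transcendence degree, and restrict a relative transcendence basis to fibers.

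One genuine gap deserves comment. You pass from ``$m\leqslant\a(F_y)$ for general $y$'' to ``$m\leqslant\inf_y\a(F_y)$'' by invoking lower semicontinuity of the algebraic dimension of the fibers, asserting that $\a(F_y)$ can only jump \emph{up} on a thin locus. This is not a freely available fact in the generality of the lemma (arbitrary irreducible compact complex spaces, meromorphic $f$, possibly singular or reducible fibers), and it is not how the standard proof proceeds. The argument in Ueno instead shows \emph{directly} that the restrictions $\phi_1|_{F_y},\dots,\phi_m|_{F_y}$ remain algebraically independent for \emph{every} $y$ (where they make sense): an algebraic dependence among the restrictions over one fixed fiber is promoted, via the structure of meromorphic functions on the total space, to a relation with coefficients in $f^*\Mer(Y)$, contradicting the choice of the $\phi_i$. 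This yields $m\leqslant\a(F_y)$ for all $y$ at once and makes the infimum automatic, without any appeal to semicontinuity. Your spreading-out idea in step (a) is exactly the right mechanism; you should apply it fiber-by-fiber rather than only generically, and drop the semicontinuity step (b) altogether.
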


\begin{definition}\label{def: algebraic reduction}
	Given a compact complex space $X$, its {\it algebraic reduction} is a meromorphic fiber space $f:X\dashrightarrow X_0$ to a projective variety $X_0$, such that $f$ induces an isomorphism $\Mer(X_0)\cong\Mer(X)$. 
\end{definition} 

An algebraic reduction is unique up to bimeromorphic equivalence and clearly $\a(X)=\dim(X_0)$. Further, as explained in Definition \ref{def: fiber spaces}, we may assume the algebraic reduction to be a (holomorphic) fiber space. 

\begin{example}
	If $X$ is a surface, then its algebraic reduction is a holomorphic map, whose typical fiber is an elliptic curve \cite[VI.5.1]{BPV}.
\end{example}

We shall need the following result in the future.

\begin{lemma}\label{lemma_alg_red_of_product}
 If $\xi_X \colon X\to X_0$ and $\xi_Y\colon Y\to Y_0$ are the algebraic reductions of compact complex manifolds $X$ and $Y$, then
 \begin{equation*}
  (\xi_X\times \xi_Y)\colon X\times Y \to X_0 \times Y_0
 \end{equation*}
 is the algebraic reduction of $X\times Y$. In particular, $\a(X\times Y) = \a(X)+\a(Y)$. 
\end{lemma}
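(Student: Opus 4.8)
The plan is to verify directly that the morphism $\Xi := \xi_X\times\xi_Y$ satisfies the defining properties of an algebraic reduction (Definition~\ref{def: algebraic reduction}): it is a holomorphic fiber space onto a projective variety, and the pullback $\Xi^*\colon\Mer(X_0\times Y_0)\to\Mer(X\times Y)$ is an isomorphism. Arranging $\xi_X$ and $\xi_Y$ to be holomorphic fiber spaces (as allowed after Definition~\ref{def: algebraic reduction}), the map $\Xi$ is proper and surjective, its target $X_0\times Y_0$ is projective, and its general fiber is $F_X\times F_Y$, where $F_X$ and $F_Y$ are general fibers of $\xi_X$ and $\xi_Y$. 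Since the product of two irreducible reduced complex spaces over $\CC$ is again irreducible, $F_X\times F_Y$ is irreducible, so $\Xi$ is a fiber space in the sense of Definition~\ref{def: fiber spaces}.

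Since $\Xi$ is dominant, $\Xi^*$ is injective, so the whole issue is surjectivity, equivalently the equality $\Mer(X\times Y)=\Xi^*\Mer(X_0\times Y_0)$. I would first pin down the transcendence degrees. For the upper bound I apply Lemma~\ref{lem: alg dimension properties} to the projection $\pr_X\colon X\times Y\to X$, whose fibers are all isomorphic to $Y$; this gives $\a(\pr_X)=\a(Y)$ and hence $\a(X\times Y)\le\a(X)+\a(Y)$. For the matching lower bound I choose transcendence bases $u_1,\dots,u_p\in\Mer(X)$ and $v_1,\dots,v_q\in\Mer(Y)$ (with $p=\a(X)$, $q=\a(Y)$) and check that their pullbacks $\pr_X^*u_i$ and $\pr_Y^*v_j$ are algebraically independent on $X\times Y$: restricting a hypothetical polynomial relation to a general vertical slice $\{x_0\}\times Y\cong Y$ turns it into a relation among the $v_j$ with coefficients polynomial in the values $u_i(x_0)$, so independence of the $v_j$ kills these coefficients, and then independence of the $u_i$ (as $x_0$ varies over a Zariski-dense set of values) kills the relation. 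Thus $\a(X\times Y)=\a(X)+\a(Y)=\dim(X_0\times Y_0)$, so $\Mer(X\times Y)$ is algebraic over the subfield $\Xi^*\Mer(X_0\times Y_0)$.

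To upgrade this to genuine equality I would show that every $h\in\Mer(X\times Y)$ is constant along the general fiber of $\Xi$. Restricting $h$ to a general horizontal slice $X\times\{y_0\}\cong X$ produces a meromorphic function on $X$, which lies in $\xi_X^*\Mer(X_0)$ because $\xi_X$ is the algebraic reduction of $X$; hence $h$ is constant on each set $\xi_X^{-1}(a)\times\{y_0\}$. Symmetrically, $h$ is constant on each $\{x_0\}\times\xi_Y^{-1}(b)$. A two-step path argument (move within a horizontal slice, then within a vertical slice) then shows that $h$ takes a single value on $\xi_X^{-1}(a)\times\xi_Y^{-1}(b)=\Xi^{-1}(a,b)$, i.e. $h$ is constant on the general fibers of $\Xi$. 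The final step is the standard descent for fiber spaces: a meromorphic function constant on general fibers is the pullback of a function from the base. Concretely I would consider the meromorphic map $(\Xi,h)\colon X\times Y\dashrightarrow (X_0\times Y_0)\times\PP^1$, observe that its image projects generically bijectively onto $X_0\times Y_0$ (being dominant with one-point general fibers between spaces of equal dimension), and invert this bimeromorphism to write $h=\Xi^*g$ with $g\in\Mer(X_0\times Y_0)$. This yields $\Mer(X\times Y)=\Xi^*\Mer(X_0\times Y_0)$, so $\Xi$ is the algebraic reduction, and $\a(X\times Y)=\a(X)+\a(Y)$ follows.

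I expect the descent step to be the main obstacle: it requires care to ensure that a general slice is generic enough for the restriction of $h$ to be meromorphic and to land in the image of $\xi_X^*$, and that fiberwise constancy on a dense family of fibers genuinely forces $h$ to factor through $\Xi$ as a \emph{meromorphic} (rather than merely set-theoretic) function. An equivalent and perhaps cleaner packaging of the same crux uses that the general fiber $F_X\times F_Y$ of $\Xi$ has algebraic dimension $0$, which follows from $\a(F_X)=\a(F_Y)=0$ for the fibers of the algebraic reductions together with Lemma~\ref{lem: alg dimension properties} applied to $F_X\times F_Y\to F_X$; then $h$ restricts to a constant on $F_X\times F_Y$, and the descent lemma applies directly.
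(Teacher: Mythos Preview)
Your proof is correct and shares its central mechanism with the paper's: restrict a meromorphic function to slices $X\times\{y_0\}$ and $\{x_0\}\times Y$, use that $\xi_X$ and $\xi_Y$ are algebraic reductions to force constancy along the fibers, then descend. The organization differs, though. The paper factors in two stages, first showing $\Mer(X\times Y)\cong\Mer(X_0\times Y)$ via $\xi_X\times\id_Y$ and then repeating for the second factor; this sidesteps both your separate transcendence-degree computation and the two-step path argument on the product fiber, since at each stage only one factor is being collapsed. Your preliminary verification that $\a(X\times Y)=\a(X)+\a(Y)$ is therefore redundant: once you have fiberwise constancy and descent, the equality of function fields (and hence of algebraic dimensions) is automatic, as the paper's route makes clear. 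Your closing remark---that one can bypass all of this by noting $\a(F_X\times F_Y)=0$ via Lemma~\ref{lem: alg dimension properties} and invoking descent directly---is a third valid packaging, arguably the cleanest of the three.
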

\begin{proof}
 {It is clear that $\xi_X\times\xi_Y$ is a fiber space.} Consider the map 
 \begin{equation*}
  (\xi_X\times \id_Y)\colon X\times Y \to X_0\times Y.
 \end{equation*}
 Fix a generic point $y\in Y$ and restrict this map to a submanifold $X\times\{y\}$:
 \begin{equation}\label{eq: lemma alg red}
  (\xi_X\times \id_Y)|_{X\times\{y\}}\colon X\times\{y\} \to X_0\times\{y\}.
 \end{equation}
 Now take a meromorphic function $f$ on $X\times Y$. The map (\ref{eq: lemma alg red})  is the algebraic reduction, so the restriction of the function $f|_{X\times\{y\}}$ is constant on fibers of $(\xi_X\times \id_Y)|_{X\times\{y\}}$.
 Then, the function $f$ is constant on all fibers of the map~\mbox{$\xi_X\times \id_Y$}. {Therefore, one can define a map
\[
\varphi: \Mer(X\times Y)\to \Mer(X_0\times Y),\ \ \ \varphi(f)(x_0,y)=f(x,y),\ \ \ \text{where}\ x\in\xi_X^{-1}(x_0),
\] 
which is easily checked to be an isomorphism.
} 
 By repeating this argument 
 for the map~\mbox{$\id_{X_0}\times \xi_Y$} we get the following chain of isomorphisms:
 \begin{equation*}
 \Mer(X\times Y) \cong \Mer(X_0\times Y) \cong \Mer(X_0\times Y_0). 
 \end{equation*}
 Since $X_0\times Y_0$ is a projective variety, it is the algebraic reduction of $X\times Y$.
\end{proof}

\section{Non-K\"ahler irreducible holomorphic symplectic manifolds}\label{sec: BG}

\subsection{Kodaira surfaces}\label{subsec: Kodaira} The known examples of non-K\"ahler irreducible holomorphic symplectic manifolds are constructed from Kodaira-Thurston surfaces, 
so we first recall main properties of the latter ones. Most of the statements of this paragraph can be found in Kodaira's original paper \cite{Kodaira}.

A {\it Kodaira-Thurston surface} (often called just {\it Kodaira surface}) is a compact complex surface of Kodaira dimension 0 with odd first Betti number 
(so, it is never K\"{a}hler). 
There are two kinds of Kodaira surfaces: primary and secondary ones. Every secondary surface is just a quotient of a primary Kodaira 
surface $S$ by a finite cyclic group acting freely on $S$. Any primary Kodaira surface $S$ (we fix this notation until the end of the 
paper) can be constructed as follows. Let $E$ be an elliptic curve, and take a line bundle $\Line$ on $E$ with the first Chern 
class $c_1(\Line)\ne 0$. Denote by $S'$ the total space of $\Line$ with zero section removed (so, $S'$ is a $\CC^*$-bundle on $E$). 
Now fix $\lambda\in\CC$ with $|\lambda|>1$, and let $g_\lambda: S'\to S'$ be the corresponding homothety. Then $S=S'/\langle g_\lambda\rangle$. 

\begin{remark}
	Topologically, $S$ has a structure of a principal $\Sph^1$-fibration over $\Torus^3=\Sph^1\times\Sph^1\times\Sph^1$.
\end{remark}

One can show that $S$ has the following invariants \cite[Table 10]{BPV}:
\[
K_S\sim 0,\ \ a(S)=1,\ \ b_1(S)=3,\ \ b_2(S)=4,\ \ \chi(S)=0,\ \ h^{0,1}(S)=2,\ \ h^{0,2}(S)=1.
\] 
Here, and throughout the paper, $a(S)$ denotes the algebraic dimension of $S$.
We will use the following property of manifolds with a map to Kodaira surface.
\begin{lemma}\label{lemma_map_to_Kodaira}
 Assume that $f: M\dashrightarrow S$ is a bimeromorphic map from a smooth surface $M$ to a Kodaira surface $S$. Then $M$ is non-K\"ahler.
\end{lemma}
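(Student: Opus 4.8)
The plan is to argue by contradiction, using the parity of the first Betti number as the decisive invariant. Recall that on any compact K\"ahler manifold the Hodge decomposition forces every odd Betti number to be even; in particular a compact K\"ahler surface has $b_1$ even. From the list of invariants of $S$ recorded above we have $b_1(S)=3$, which is odd, so $S$ itself is non-K\"ahler. Hence everything reduces to showing that $b_1$ is a bimeromorphic invariant of smooth compact surfaces: once we know $b_1(M)=b_1(S)=3$, the assumption that $M$ is K\"ahler yields the contradiction $2\mid 3$.

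First I would resolve the indeterminacy of $f$. Since $M$ and $S$ are smooth and compact, there is a smooth compact surface $W$ with a morphism $\sigma\colon W\to M$ that is a finite composition of point blow-ups and for which the induced meromorphic map $W\dashrightarrow S$ becomes an honest morphism $\tau\colon W\to S$. Because $f$ is bimeromorphic, $\tau$ is a bimeromorphic morphism of smooth surfaces, hence itself a finite composition of blow-downs of $(-1)$-curves by the classical structure theory of surface morphisms (valid in the analytic category, see \cite{BPV}). Thus $M$ and $S$ are connected to the common $W$ by chains of blow-ups.

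It then remains to note that a single blow-up of a smooth point does not change $b_1$: the exceptional divisor is a copy of $\PP^1$, which is simply connected, so by van Kampen (or by the fact that the blow-up is the connected sum with a simply connected $4$-manifold) the operation leaves $\pi_1$, and a fortiori $H_1$ and $b_1$, unchanged. Iterating along $\sigma$ and $\tau$ gives $b_1(M)=b_1(W)=b_1(S)=3$, completing the contradiction and proving that $M$ is non-K\"ahler.

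The only genuinely non-formal ingredient is the factorisation of $f$ into blow-ups and blow-downs, i.e. the fact that bimeromorphic maps and morphisms of smooth compact complex \emph{surfaces} are governed by exactly the same elementary transformations as in the algebraic case. This is standard surface theory, but it is the step I would cite most carefully (e.g. \cite{BPV}); everything else, including the invariance of $b_1$ under blow-up and the evenness of $b_1$ on K\"ahler surfaces, is routine Hodge theory.
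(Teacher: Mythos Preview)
Your argument is correct and follows essentially the same route as the paper: factor the bimeromorphic map into $\sigma$-processes (blow-ups), use that these preserve $b_1$, and conclude from $b_1(M)=b_1(S)=3$ being odd that $M$ cannot be K\"ahler. Your version is in fact slightly more careful than the paper's, since you explicitly resolve the indeterminacy of $f$ before invoking the factorisation into blow-ups, whereas the paper's proof tacitly treats $f$ as a morphism.
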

\begin{proof}
 If $f$ is not isomorphism in a point $x\in S$, then by \cite[III.4.2]{BPV} it is a $\sigma$-process with center in $x$. By \cite[I.9.1]{BPV} the $\sigma$-process does not change 
 the Betti number~$b_1$. Thus,we get that the number $b_1(M) = b_1(S) = 3$ is odd which is impossible for K\"ahler manifolds.
\end{proof}

\subsection{Bogomolov-Guan example}\label{subsec: BG construction}
Let us recall the first example of simply-connected non-K\"ahler compact complex manifolds, which we call {\it BG-manifolds} as in \cite{Kur-Ver}. 
This example was introduced by D.~Guan in \cite{Guan1} and then studied by the first author in \cite{Bogomolov}. Here we mainly follow Bogomolov's work.

Recall that an algebraic reduction of a compact complex surface $X$ of algebraic dimension 1 is the morphism $X\to C$ to a curve $C$ obtained as regularization of a meromorphic map $X\dashrightarrow\PP^1$ (defined by a non-constant meromorphic function), followed by the Stein factorization. Let 
\begin{equation}\label{eq_Kodaira_projection}
\pi\colon S \xrightarrow{F} E.
\end{equation}
be the algebraic reduction of a Kodaira surface $S$. Then $E$ is an elliptic curve and $\pi$ is a principal elliptic fibration \cite{BPV}.
All fibers of $\pi$ are isomorphic to each other, denote it $F$. 
Moreover, the field $\Mer(S)$ of all meromorphic functions of $S$ is isomorphic to $\Mer(E)$. This implies in particular that all automorphisms of~$S$ are induced by automorphisms of $E$. 

In what follows we denote by $\Sym^n E$ and $\Sym^n S$ the symmetric products $E^n/\SG_n$ and $S^n/\SG_n$.
Denote by $S^{[n]}$ and $E^{[n]}$ the Hilbert schemes of length $n$ zero-dimensional subschemes of $S$ and~$E$ respectively. 
Then we have the induced projection:
\begin{equation*}
\pi^{[n]} = \left(\Sym^n(\pi)\circ\delta\right)\colon S^{[n]} \xrightarrow{F^{[n]}} \Sym^n E.
\end{equation*}
Here $\delta$ is a resolution of singularities $S^{[n]}\to \Sym^n S$. The generic fiber of this projection is 
isomorphic to a $F^n$.  Define
\begin{equation*}
\Sigma:\ \Sym^n E\to E,\ \ \ \ x=\{x_1,\ldots,x_n\}\mapsto x_1+\ldots + x_n.
\end{equation*}
It is the Abel-Jacobi map of $\Sym^n E$. The symmetric product of a smooth curve is smooth; thus, we have an isomorphism $E^{[n]}\cong \Sym^n E$. 
This gives us the following structure of the fiber space:
\begin{equation*}\label{eq_HilbKodaira_projection}
\pi_n =\left(\Sigma \circ \pi^{[n]}\right) \colon S^{[n]} \to E.
\end{equation*}
Fix $0\in E$ and denote by $W$ the fiber of this projection over zero point:
\begin{equation*}
W = \pi_n^{-1}(0) \xrightarrow{\pi^{[n]}} \Sigma^{-1}(0).
\end{equation*}
Abel's theorem implies that the fiber of the Abel-Jacobi map $\Sigma$ through a point $D\in\Sym^n E$ is the projective space $|D|=\PP H^0(\OOO_E(D))$, 
i.e. $\Sigma^{-1}(0)\cong\PP^{n-1}$, see \cite[I.3]{Arbarello}.

The action of $F$ on fibers of $S$ induces a fiberwise diagonal action on $S^{[n]}$, and, therefore, on the fiber space $W$. Thus, we have a projection 
\begin{equation}\label{eq_Pi}
\Pi\colon W/F \to \p^{n-1}.
\end{equation}
The manifold
\[
W_F=W/F
\]
will be called the {\it base} of a BG-manifold $Q$, which is given by the following result of the first author.
\begin{theorem}[{\cite[Section 3]{Bogomolov}}]\label{BG-th}
        If a natural number $n>2$ divides $c_1(\Line)$, there exists a simply connected non-K\"{a}hler 
        compact complex smooth manifold $Q$ and a finite morphism 
	\begin{equation*}
	p\colon Q \to W_F.
	\end{equation*}
	of degree $n^2$. The manifold $Q$ is isomorphic to the one described in \textup{\cite{Guan1}}.
\end{theorem}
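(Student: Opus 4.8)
The plan is to realise $Q$ as a fibre-sum analogue of the generalized Kummer construction, performed over the Kodaira surface $S$ in place of a torus, and then to verify its geometric properties. First I would record the local structure of $\Base=W/F$. The map $\pi_n=\Sigma\circ\pi^{[n]}$ is a submersion at every reduced point of $S^{[n]}$: at $\xi=\{s_1,\dots,s_n\}$ with distinct images $e_i=\pi(s_i)$ the tangent space is $\bigoplus_i T_{s_i}S$, and after the usual identifications $d(\pi_n)_\xi(v_1,\dots,v_n)=\sum_i d\pi_{s_i}(v_i)$, which is surjective because each $d\pi_{s_i}$ is. Hence $W=\pi_n^{-1}(0)$ is smooth of dimension $2n-1$ (the submersion property extending to non-reduced $\xi$ by the standard local model on $S^{[n]}$), and the induced map $W\to\Sigma^{-1}(0)\cong\PP^{n-1}$ has general fibre a torsor over $F^n$, with the diagonal $F$ acting by simultaneous fibrewise translation. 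Thus the general fibre of $\Pi\colon\Base\to\PP^{n-1}$ is a torsor over $F^n/\Delta(F)$, where $\Delta(F)\hookrightarrow F^n$ is the diagonal.

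The degree $n^2$ now comes from a torsion computation on this fibre. Writing $\sigma\colon F^n\to F$ for the sum and $K=\ker\sigma\cong F^{n-1}$, the composite $\Delta(F)\hookrightarrow F^n\xrightarrow{\sigma}F$ is multiplication by $n$; since $F$ is divisible one checks that $K\hookrightarrow F^n\to F^n/\Delta(F)$ is surjective with kernel $K\cap\Delta(F)=\Delta(F[n])$, the diagonal $n$-torsion. Hence
\[
F^n/\Delta(F)\;\cong\;K/\Delta(F[n]),\qquad |\Delta(F[n])|=n^2 .
\]
So the general fibre of $\Pi$ is the quotient of the abelian variety $K\cong F^{n-1}$ by a free translation action of a group of order $n^2$. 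The manifold $Q$ is designed so that its fibre over $\PP^{n-1}$ is this $K$, with $p\colon Q\to\Base$ the corresponding fibrewise quotient by $F[n]$; over the open locus where the $F$-action on $W$ is free this exhibits $p$ as an \'etale $(\ZZ/n)^2$-cover, giving $\deg p=n^2$.

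To build $Q$ globally one must promote the fibrewise kernel $K$ to an honest family. The obstruction to defining an $F$-valued ``fibre sum'' along $S\to E$ is measured by the torsor class of $S$, i.e. by $c_1(\Line)$; the hypothesis $n\mid c_1(\Line)$ is exactly what makes this obstruction divisible by $n$, so that it is trivialised after passing to the fibrewise $F[n]$-cover. On that cover the fibre-sum map is defined and $Q$ is its zero fibre, while $\Base$ is recovered as $Q/F[n]$. This is the step I expect to be the most technical: one has to check that the finite map $p\colon Q\to\Base$ has smooth source even over the collision loci, where points of $\xi$ coincide in a fibre of $\pi$, the $F$-action on $W$ acquires nontrivial torsion stabilisers, and $\Base$ is singular; that is, one must verify that the $F[n]$-cover resolves precisely those quotient singularities.

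It remains to verify the three asserted properties. Compactness is immediate from the construction, as is the fibration $Q\to\PP^{n-1}$ with abelian general fibre. Simple connectedness I would deduce by a careful computation of $\pi_1$: over the complement of the discriminant $Q\to\PP^{n-1}$ is a torsor under an abelian variety, and one controls $\pi_1$ via the monodromy on the fibre together with the torsion identifications coming from the $F[n]$-cover, finally checking that the degenerate fibres contribute nothing new; here the hypotheses $n>2$ and $n\mid c_1(\Line)$ enter to force $\pi_1(Q)=0$. Non-K\"ahlerness is the deepest point, and I would follow Bogomolov--Guan: were $Q$ K\"ahler it would be hyperk\"ahler by the Calabi--Yau theorem, and one then derives a contradiction with the computed cohomology of $Q$ and the structure of $H^2(Q)$ with its holomorphic symplectic form, as carried out in \cite{Guan1,Bogomolov}. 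Finally, the identification of $Q$ with Guan's manifold is obtained by matching holomorphic symplectic forms, the form on $S^{[n]}$ descending through the fibre-sum construction exactly as in \cite{Guan1,Bogomolov}.
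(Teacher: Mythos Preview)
The paper does not prove this theorem; it is stated with attribution to \cite[Section~3]{Bogomolov} and no argument is supplied. So there is no proof in the present paper to compare your sketch against.

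That said, the proof of Proposition~\ref{lemma_fiber_of_Phi_abelian} immediately below the theorem reveals part of Bogomolov's actual construction, and it differs from your outline in an essential way. Bogomolov does \emph{not} build $Q$ as a fibrewise holomorphic $F[n]$-cover of $\Base$. Instead he passes through a real construction: one regards $S$ as a real $4$-manifold with a free $\Sph^1$-action whose quotient $T=S/\Sph^1$ is a real $3$-torus, forms the sum map $\Sigma_T\colon\Sym^n T\to T$, and lets $M$ be the fibre of $\Sigma_T\circ\Sym^n(\psi)\circ\delta\colon S^{[n]}\to T$. The cover then factors as $Q\to M/\Sph^1\to\Base$, and the fundamental-group computations cited from \cite{Bogomolov} (the index $[\pi_1((M/\Sph^1)_x):\pi_1(Q_x)]=n$ together with the further step to $\Base$) produce the degree $n^2$. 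Your torsion computation $F^n/\Delta(F)\cong K/\Delta(F[n])$ with $|\Delta(F[n])|=n^2$ is correct and recovers the degree on the generic fibre, but it is not how the global cover is produced; see also Remark~\ref{Guan-constr} for the nilmanifold viewpoint that underlies both Guan's and Bogomolov's constructions.

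The genuine gap in your approach is the one you yourself flag: you propose $Q$ as ``the fibrewise $F[n]$-cover'' and then defer the extension over the collision loci and the smoothness check. But a fibrewise cover of the generic fibre does not by itself define a compact complex manifold; extending it over the degenerate fibres, equipping it with a holomorphic symplectic form, and proving simple-connectedness is precisely the content of the theorem, and your sketch supplies no mechanism for this. The real $\Sph^1$/nilmanifold description is what makes the global construction and the $\pi_1$ computation tractable; a purely holomorphic fibrewise argument would need substantial additional input that is absent here.
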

To conclude, the BG-manifold $Q$ is included in the following diagram:
\begin{equation*}
 \xymatrix{
 Q \ar[rrd]_{\Phi}\ar[r]^{p\hspace{10pt}} & W/F \ar[rd]^{\Pi} & W\ar[l]_{\hspace{10pt}q} \ar@{^{(}->}[r]
 \ar[d]^{\pi_0}& S^{[n]}\ar[d]^{\pi^{[n]}} \ar[r]^{\delta\hspace{10pt}} & \Sym^n(S) \ar[d]^{\Sym^n(\pi)} & S\times\dots\times S \ar[d]^{\pi\times\dots\times\pi} \ar[l]_{\alpha} \\
 && \p^{n-1} \ar[d] \ar@{^{(}->}[r] & E^{[n]} \ar[r]^{\cong\hspace{9pt}} \ar[d]^{\Sigma} & \Sym^n(E) & E\times\dots\times E \ar[l]_{\alpha_E}\\
 && 0\ar@{^{(}->}[r] & E &&
 }
\end{equation*}

\begin{remark} \label{Guan-constr}
The original construction of BG-manifolds is due to Guan \cite{Guan3, Guan2} and it relies on nilmanifolds. 
In his papers Guan used the two ways of how one may obtain a compact holomorphic symplectic manifold. The first one is well-known, 
it is the holomorphic symplectic reduction. The second is introduced in \cite{Guan2}. 

\begin{itemize}
	\item[(i)] For any given compact holomorphic symplectic manifold $M$ with the Albanese map  $M \to A$ 
	let $V$ be a subspace of the pull-backed holomorphic 1-forms from $A$ such that the holomorphic symplectic structure vanishes on $V$. 
	Let $G$ be a Lie group of holomorphic vector field dual to $V$. Then one might get a compact holomorphic symplectic manifold by the 
	smooth covering of the quotient of a fiber of $M$ over a point of $A$ by the Lie group $G$.
	
	\item[(ii)] For the holomorphic symplectic nilmanifold $\mathcal{N}$ one can try to find a faithful representation of finite subgroup $\mathrm{S}$ of $\Aut(G)$, where $G$ is the Lie group corresponding to the nilmanifold $\mathcal{N}$. Suppose this representation preserves the complex structure as well as the symplectic structure, and suppose the set \[D = \big\{\ n \in \mathcal{N}\ :\ s(n)=n\ \text{for some}\ s \in \mathrm{S}\ \big\} \] has only codimension 2 irreducible components and $\pi_1(\mathcal{N})^{\mathrm{S}}=1$. Then the desingularization of $\mathcal{N}/\mathrm{S}$ might be the simply-connected non-K\"ahler holomorphic symplectic manifold.
\end{itemize}

The first method was used by Guan in \cite{Guan3} for the Hilbert scheme $S^{[n]}$ of length $n$ of any Kodaira-Thurston surface $S$. 
The covering of orbifold was obtained using its isomorphism to the quotient of a nilmanifold by a subgroup of an automorphism group. 
This method is close to the Bogomolov's one described above.

The second method was applied in \cite{Guan2} where the author started with nilmanifold $M_{n,t}$ given by structure equations, where $n \in \N, t \in \ZZ$. 
Then he got a deformation family $Q_{n,t}$ of simply-connected holomorhic symplectic manifolds which are non-K\"ahler of dimension $2n-2$ paramtrized by 
integer parameter $t$. This is analogous to the existence of different Kodaira surfaces parametrized by $c_1(L)$. This construction 
generalizes the one in \cite{Guan3}.

In particular, the manifold $Q_{n-1,1}$ coincides with $Q$ constructed in \ref{BG-th}. 
Moreover, from Guan's construction it follows that in dimension 2 the manifold $Q$ is a K3 surface since $\mathcal{N}$ is abelian in this case.

\end{remark}

\begin{example}\label{ex: n=2}
Consider the case $n=2$. Then the manifold $R/F$, where $R:=(\Sym^2\pi)^{-1}(0)$, is the fiber of a K\"ahler torus of dimension $2$ (see Lemma \ref{lemma_algebraic_fibers_over_curve})
and the induced action of $\SG_2$ is an involution on this torus. Therefore, $W_F$ is bimeromorphic to Kummer K3 and $Q$ 
is its cover of degree $4$. By construction $Q$ is smooth simply-connected K\"ahler holomorphic symplectic manifold; thus, it should be also a K3 surface.
\end{example}

\begin{proposition}\label{lemma_fiber_of_Phi_abelian}
 A generic fiber of $\Phi\colon Q\to \p^{n-1}$ is an abelian variety; in particular, it is connected.
\end{proposition}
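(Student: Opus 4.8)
The plan is to read off the generic fiber of $\Phi=\Pi\circ p$ by first identifying the generic fiber of $\Pi\colon\Base\to\PP^{n-1}$ and then controlling the finite cover $p\colon Q\to\Base$ over it. First I would use the description of $\pi^{[n]}$ from the construction: over a point of $\Sym^n E$ corresponding to $n$ distinct points $e_1,\dots,e_n\in E$, the fiber of $\pi^{[n]}$ is the product $\prod_{i}\pi^{-1}(e_i)\cong F^n$ of the fibers of $\pi\colon S\to E$. Restricting to $W=\pi_n^{-1}(0)$ and to the Abel--Jacobi fiber $\Sigma^{-1}(0)\cong\PP^{n-1}$ shows that the generic fiber of $\pi_0\colon W\to\PP^{n-1}$ is $F^n$, an abelian variety of dimension $n$.

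Next I would analyse the fiberwise $F$-action. By construction $F$ acts on this fiber by the diagonal translation $t\cdot(x_1,\dots,x_n)=(x_1+t,\dots,x_n+t)$, and this action is free, since $x_i+t=x_i$ for every $i$ forces $t=0$. Hence $F^n/F$ is smooth, and the surjective homomorphism $F^n\to F^{n-1}$, $(x_1,\dots,x_n)\mapsto(x_1-x_n,\dots,x_{n-1}-x_n)$, whose kernel is exactly the diagonal, identifies it with the abelian variety $F^{n-1}$. Thus the generic fiber of $\Pi\colon\Base\to\PP^{n-1}$ is the abelian variety $F^{n-1}$, of dimension $n-1=\tfrac12\dim Q$.

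It remains to pass to the degree $n^2$ cover $p$. Over a generic $z\in\PP^{n-1}$ both $W$ and $\Base$ are smooth along $\Pi^{-1}(z)$, so $\Phi^{-1}(z)=p^{-1}\bigl(\Pi^{-1}(z)\bigr)$ is a finite cover of degree $n^2$ of the abelian variety $F^{n-1}$. I would argue that this cover is \'etale: the ramification of $p$ occurs only over the discriminant of $\Pi$, i.e. over the locus of degenerate fibers described in Section \ref{sec: degenerate fibers}, which is a proper closed subset of $\PP^{n-1}$ that a generic $z$ avoids. Granting this, $\Phi^{-1}(z)\to F^{n-1}$ is a finite \'etale covering, so each of its connected components is an abelian variety isogenous to $F^{n-1}$.

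The hard part is to prove that $\Phi^{-1}(z)$ is connected, i.e. a single abelian variety. Writing the Stein factorisation $\Phi=g\circ h$ with $h\colon Q\to Y$ having connected fibers and $g\colon Y\to\PP^{n-1}$ finite, the number of components of the generic fiber equals $\deg g$; over the complement of the discriminant $g$ is a finite covering (the number of components of a smooth fiber is locally constant by Ehresmann's theorem), and I would like to conclude $\deg g=1$ from the simple connectivity of $\PP^{n-1}$. This forces me to control the monodromy of $p$ around the discriminant, which is exactly where the explicit geometry of Bogomolov's cover in Theorem \ref{BG-th} enters: the degree $n^2=|F[n]|$ should reflect that $p$ restricts on the generic fiber to an isogeny of abelian varieties with kernel the diagonal $n$-torsion $F[n]$, and an isogeny is automatically connected and unramified. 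Once this isogeny structure of $p$ on the generic fiber is established, $\Phi^{-1}(z)$ is a connected finite \'etale cover of $F^{n-1}$, hence an abelian variety, completing the proof.
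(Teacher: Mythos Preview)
Your identification of the generic fiber of $\Pi$ as $F^{n-1}$ is correct and matches the paper. The gaps are both in the passage to $Q$, and they are genuine.

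First, the claim that the branch locus of $p$ lies over the discriminant of $\Pi$ is asserted but not argued. The map $p\colon Q\to\Base$ is a finite morphism between smooth manifolds, so its ramification divisor is intrinsic to $p$; there is no a priori reason it should be $\Pi$-vertical. Second, and more seriously, your connectedness argument does not close. You correctly observe that Stein factorisation reduces the question to $\deg g=1$, but then note yourself that this would require $\PP^{n-1}\setminus D$ to be simply connected, which it is not (indeed $D$ is the dual hypersurface of $E$, cf.\ Section~\ref{sec: degenerate fibers}). Your fallback---that the degree $n^2=|F[n]|$ ``should reflect'' that $p$ restricts to an isogeny with kernel the diagonal $F[n]$---is exactly the statement to be proved, and the construction of $p$ is not simply a fiberwise $n$-torsion quotient: it passes through a real $\Sph^1$-quotient, so this heuristic does not directly apply.

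The paper's proof proceeds quite differently. Rather than analysing $p$ over $\PP^{n-1}$, it unwinds Bogomolov's construction: one factors $p$ through an intermediate manifold $M/\Sph^1$, where $M$ is a fiber of the composite $S^{[n]}\to\Sym^n T\to T$ built from the real $3$-torus $T=S/\Sph^1$. Connectedness of the fibers of $M/\Sph^1\to\PP^{n-1}$ is known by construction, and the paper then invokes explicit fundamental group computations from \cite{Bogomolov} (specifically the index-$n$ inclusion $\pi_1(Q_x)\subset\pi_1((M/\Sph^1)_x)$) to conclude that $Q_x$ is a \emph{connected} unramified cover of $(M/\Sph^1)_x$, hence of $F^{n-1}$. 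The Serre--Lang theorem then finishes as you indicated. The point is that both the unramifiedness and the connectedness come packaged together from the topology of Bogomolov's explicit cover, not from any global argument over $\PP^{n-1}$.
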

\begin{proof}
 To show this, we need to recall the construction of the finite cover $p\colon Q \to \Base$.
 If we consider $S$ as a real manifold of dimension $4$, there is an action of $\Sph^1$ on $S$ 
 induced from the action on~$\L^*$. This free action gives us a structure of a fiber space:
 \begin{equation*}
  \psi\colon S \to T,
 \end{equation*}
 where $T = S/\Sph^1$ is a real $3$-dimensional torus.
 There is a map from the Hilbert scheme of $S$ to the Hilbert scheme of $T$:
 \begin{equation*}
  S^{[n]} \xrightarrow{\Sym^n(\psi)\circ \delta} \Sym^n T \xrightarrow{\Sigma_T} T.
 \end{equation*}
 Here the map $\Sigma_T$ is induced by the map $\Sym^n T \to T$ which maps a set of $n$ points in $S$ to their sum.
 By \cite[Proposition 2.1]{Bogomolov} the map $\Sigma_T$ is well-defined and the fiber $M$ 
 of the composition $\Sigma_T\circ \psi^{[n]}$ is smooth and irreducible.
 
 By \cite[Lemma 3.8]{Bogomolov} the finite cover $p\colon Q\to W_F$ factors in the following way:
 \begin{equation*}
  \xymatrix{Q \ar[r]_{\tau} \ar[rrd]_{\Phi}  \ar@/^1pc/[rr]^{p} & M/\Sph^1 \ar[r] \ar[rd]^{\Psi} & W_F\ar[d]^{\Pi}\\
  &&\p^{n-1}  
  }
 \end{equation*}
 By construction, fibers of $\Psi$ are connected. Now we fix a general point $x\in \p^{n-1}$ and 
 consider fibers $Q_x$ and $(M/\Sph^1)_x$ of maps $\Phi$ and $\Psi$. By \cite[Remark 3.5]{Bogomolov} we see
 that the generator of the fundamental group $\pi_1(M/\Sph^1)$ is induced by an element in $\pi_1((M/\Sph^1)_x)$.
 Moreover, by \cite[Corollary 3.7]{Bogomolov} we see 
 \begin{equation*}
  \pi_1(Q_x) \subset \pi_1((M/\Sph^1)_x)
 \end{equation*}
 and the index of the subgroup equals $n$. Therefore, we deduce that $Q_x$ is a connected finite unramified  cover
 of $(M/\Sph^1)_x$. Moreover, by \cite[Remark 3.9]{Bogomolov} we see that $Q_x$ is a finite unramified cover
 of a generic fiber of $W_F$ which is isomorphic to an abelian variety $\Pi^{-1}(x)\cong F^{n-1}$. 
 By the Serre-Lang theorem \cite[Chapter IV]{Mumford}, we conclude
 that $Q_x$ is an abelian variety as well.
\end{proof}

\section{The algebraic reduction of BG-manifolds}\label{sec: W/F}

The goal of this Section is to prove Theorem A. We start with the algebraic reduction
\[
\pi: S\overset{F}{\longrightarrow} E
\]
of a Kodaira surface $S$ (with general fiber denoted by $F$). Let $\epsilon\colon E^n \to E$ be the map which sends $n$ points on $E$ to their 
sum, and $A=\ker\epsilon=\epsilon^{-1}(0)$ be the fiber of this map over $0\in E$. The elliptic curve  $F$ acts on~\mbox{$(\pi^n)^{-1}(A)\subset S^n$} fiberwise diagonally.

\begin{equation*}
\xymatrix{
	(\pi^n)^{-1}(A)\ar@{^{(}->}[r]\ar[d]_{\pi^n} & S^n\ar[d]_{\pi^n} & \\
	A\ar@{^{(}->}[r] & E^n\ar[r]^{\epsilon} & E\ni 0
}
\end{equation*}
Denote by $X$ the quotient $(\pi^n)^{-1}(A)/F$ of this action.
The abelian variety $A$ is isomorphic to~$E^{n-1}$ and $\epsilon$ induces a map
\begin{equation}\label{eq_X_to_A}
\eta \colon X\to A.
\end{equation}
These two manifolds are connected as follows.
\begin{proposition}\label{prop: K(X)=K(A)}
	The map $\eta$ is the algebraic reduction of $X$ (in particular, $\Mer(X)\cong\Mer(A)$).
\end{proposition}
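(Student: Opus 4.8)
The plan is to verify directly that $\eta$ meets the requirements of Definition~\ref{def: algebraic reduction}: that it is a holomorphic fibre space onto a projective variety and that $\eta^*\colon\Mer(A)\to\Mer(X)$ is an isomorphism. First I would dispatch the structural points. As $A\cong E^{n-1}$ is an abelian variety it is projective; $Y:=(\pi^n)^{-1}(A)$ is closed in the compact manifold $S^n$, and $F$ acts on it freely (being free on $S$) along the fibres of $\pi^n$, so $X=Y/F$ is a compact manifold, $Y\to X$ is a principal $F$-bundle, and $\eta$ is proper and surjective. The fibre of $\eta$ over $(x_1,\dots,x_n)$ is $\bigl(\prod_i\pi^{-1}(x_i)\bigr)/F\cong F^n/F\cong F^{n-1}$, an abelian variety and in particular irreducible, so $\eta$ is a fibre space. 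Pulling back functions along the surjection $\eta$ gives an injection $\Mer(A)\hookrightarrow\Mer(X)$, hence the lower bound $\a(X)\geqslant\a(A)=n-1$ (equivalently the first inequality of Lemma~\ref{lem: alg dimension properties}). Everything then reduces to the reverse inclusion $\Mer(X)\subseteq\eta^*\Mer(A)$.

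To prove this I would lift the computation to $S^n$. Since $Y\to X$ is a principal $F$-bundle, $\Mer(X)=\Mer(Y)^F$. On the product, Lemma~\ref{lemma_alg_red_of_product} together with $\Mer(S)=\pi^*\Mer(E)$ (the defining feature $\a(S)=1$ of the Kodaira surface) shows that $\pi^n\colon S^n\to E^n$ is the algebraic reduction of $S^n$, so every meromorphic function on $S^n$ is pulled back from $E^n$. Restricting such a function to $Y$ yields precisely a function pulled back from $A\subset E^n$, and all of these are $F$-invariant because $F$ acts trivially on the $E^n$-coordinates; conversely the restriction $\Mer(E^n)\to\Mer(A)$ is onto since $E^n\cong A\times E$. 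Thus $\eta^*\Mer(A)$ is exactly the image of the restriction map $\Mer(S^n)\to\Mer(Y)$, and the proposition becomes the claim that this restriction is surjective, i.e.\ that every meromorphic function on the hypersurface $Y$ is constant along the abelian fibres $F^n$ of $\pi^n|_Y$ and so descends to $A$.

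The main obstacle is exactly this fibrewise constancy, and it is where the non-K\"ahler nature of $S$ is indispensable. The difficulty is that the fibres $F^n$ are genuine abelian varieties, so they carry many meromorphic functions; what must be excluded is that these glue into a global function on $Y$ — a global non-constant family of this kind would amount to a line bundle on $Y$ that is ample along the fibres of $\eta$, and restricted to the ``$S$-directions'' this contradicts the absence of any such positivity on the principal elliptic bundle $S\to E$, which is the very meaning of $c_1(\L)\neq0$ and of $\a(S)=1<2$. Concretely I would reduce to two variables: fixing all but two of the points $s_i$ exhibits a given $h\in\Mer(Y)$ on slices isomorphic to a fibre product $S\times_E S'$ of two Kodaira surfaces over $E$, and applying this to every pair of indices shows that $h$ is constant on each fibre $F^n$ once one knows $\a(S\times_E S')=1$. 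Establishing this base case — that the twisted family of abelian surfaces $S\times_E S'\to E$ has no meromorphic functions beyond those of $E$, because its bundle class is the transcendental pull-back of $c_1(\L)$ — is the technical heart; granting it, the fibrewise-constant $h$ descends through the fibre space $\eta$, which completes the proof.
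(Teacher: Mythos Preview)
Your structural checks are fine, and lifting to $Y=(\pi^n)^{-1}(A)$ is natural. The argument breaks, however, at the point you yourself flag as ``the technical heart''. For your two--variable slices --- fixing $s_3,\dots,s_n$ and varying $(s_1,s_2)$ subject to $\pi(s_1)+\pi(s_2)=c$ --- the base curve in $E^2$ is $\{(x,c-x)\}$, on which both coordinate projections have degree~$1$. In the language of Section~\ref{sec: submanifolds of BG} this means $c_1(\L_1)=c_1(\L_2)$, so by Lemma~\ref{lemma_kodaira_type} the diagonal $F$--quotient $M_{\L_1\otimes\L_2^\vee}$ has vanishing first Chern class and is a complex $2$--torus rather than a Kodaira surface (Lemma~\ref{lemma_algebraic_fibers_over_curve}); for suitable $c$ it is even the algebraic surface $E\times F$ (cf.\ Lemma~\ref{lemma_antidiagonal}), whence $\a(S\times_E S')\geqslant 2$. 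Your heuristic that ``the bundle class is the transcendental pull--back of $c_1(\L)$'' misses exactly this: the two Chern classes being \emph{equal} means the diagonal direction in the $F^2$--fibre carries no topological twist. So the base case fails outright for some slices and is unproven even generically; the slice argument therefore does not establish that a global $h\in\Mer(Y)$ (or even $h\in\Mer(Y)^F$, which is all you actually need) is fibrewise constant.

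The paper's proof is genuinely different and avoids this trap. It does not slice; instead it uses the $\SG_n$--action on $X$, which your approach ignores. The maximal subtorus $B_0\subset B\cong F^{n-1}$ acting trivially on $\Mer(X)$ is $\SG_n$--invariant, and since $F^n/F$ has no nontrivial $\SG_n$--invariant abelian subvarieties one gets the dichotomy $B_0\in\{0,B\}$. If $B_0=B$ the conclusion is immediate. If $B_0=0$, a derivation count gives $\a(X)=\dim X$, so $X$ is Moishezon; containing no rational curves it is then projective (via \cite{BCHM}) and hence K\"ahler. The contradiction comes from the submanifold $X_E=(\pi^n)^{-1}(\{(x,-x,0,\dots,0)\})/F\cong (S\times_E S)\times F^{n-3}$, which surjects onto a Kodaira surface and so cannot sit inside a K\"ahler manifold. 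Note the use of at least \emph{three} coordinates here: the extra $F$--factor coming from $\pi^{-1}(0)$ is precisely what trivialises the diagonal $F$--quotient and lets one recover a genuine Kodaira surface as a factor --- the manoeuvre that is unavailable in your two--variable slice.
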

\begin{proof}
	In view of \eqref{eq_X_to_A} we have an injection $\Mer(A)\hookrightarrow \Mer(X)$. 
	Denote the general fiber of $\eta$ by $B$. By construction, $B$ is isomorphic to an abelian variety $F^n/F \cong F^{n-1}$. 
	There is an action of~$B$ on $\Mer(X)$. Set 
	\begin{equation*}\label{eq: B'}
	B_0=\max\big \{G\subset B\ :\ \Mer(X)^{G} = \Mer(X)\big \}.
	\end{equation*}
	There is a standard action of the symmetric group $\SG_n$ on $S^n$ and, therefore, on $X$. By the definition of~$B_0$ its action
	and the action of $\SG_n$
	commute, so $B_0$ is a~\mbox{$\SG_n$-in}variant abelian subvariety in $F^n/F$.

	Any abelian subvariety 
	$M\subseteq F^{n}$ can be uniquely defined by its tangent subspace $T_{M,0} \hookrightarrow T_{F^n, 0}$. 
	If $M$ is \mbox{$\SG_n$-in}variant, then the same is true for $T_{M,0}$. There are only two irreducible subrepresentations of the representation~$T_{F^n, 0}$
	of the group $\SG_n$. Namely, these are a trivial diagonal subrepresentation and a standard $(n-1)$-dimensional subrepresentation. 
	Consider the abelian subvariety corresponding to the trivial subrepresentation. Its image in $B$ equals $0$.
	The abelian subvariety corresponding to the standard subrepresentation maps surjectively to $B$. Thus,
	all \mbox{$\SG_n$-in}variant subvarieties of $B$ either coinside with $B$ or vanish.
	
	If $B_0 = B$ we get that $B$ acts transitively on fibers of $\eta$ and all meromorphic functions on~$X$ are constant on fibers 
	of $\eta$. Thus, $\Mer(X)$ is isomorphic to $\Mer(A)$. 
	
	If $B_0 = 0$, then we prove below that $a(X)=\dim(X)$. In order to show this consider the module $\D$ of $\Mer(A)$-linear derivations on~$\Mer(X)$:
	\begin{equation}
	 \D = \mathrm{Der}_{\Mer(A)}(\Mer(X)).
	\end{equation}
        Then by Noether normalization lemma, $\tr\deg_{\Mer(A)}(\Mer(X)) = \dim_{\Mer(A)}(\D)$. Consider a $1$-parameter subgroup $\Gamma = \{\gamma_t\}$ in $B$; this subgroup
        defines a derivation in $\D$. The dimension of the abelian variety $B$ equals $n-1$, so we fix its generators $\gamma_1,\dots, \gamma_{n-1}$ and
        denote by $\Gamma_i = \langle t\cdot\gamma_i\rangle$ a 1-parameter group in $B$ and by $D_i$ the associated derivation in $\D$.
        Assume that these derivations are linearly dependent over the field $\Mer(A)$: there exist meromorphic functions $f_1,\dots,f_{n-1}$ in~$\Mer(A)$ such that
        \begin{equation}\label{eq_linear_dependency}
         \sum_{i=1}^{n-1} f_i D_i = 0.
        \end{equation}
        By construction of $D_i$ for any function $g\in \Mer(X)$ such that the fiber $\eta^{-1}(a)$ is not a pole of $g$ we have:
        \begin{equation*}
         \left.\left(D_i(g)\right)\right|_{\eta^{-1}(a)} = D_i(g|_{\eta^{-1}(a)}).
        \end{equation*}
        There is a non-empty open subset $\U\subset A$ such that for any point $a\in \U$ it is not a pole of $f_i$ 
        for all~\mbox{$i = 1,\dots, n-1$}  and $f_j(a)\ne 0$ for some $j$. Then for each $a\in \U$ and any $g\in \Mer(X)$ 
        such that $\eta^{-1}(a)$ is not a pole of $g$  we have
        \begin{equation*}
         \sum_{i=1}^{n-1} f_i(a) D_i(g|_{\eta^{-1}(a)}) = 0.
        \end{equation*}
        Construct a group $\Gamma = \langle t\cdot\gamma\rangle$  in such a way:
        \begin{equation*}
         \gamma = \sum_{i=1}^{n-1} f_i(a) \gamma_{i}.
        \end{equation*}
        The derivation $D$ which corresponds to $\Gamma$ vanishes on all functions $g|_{\eta^{-1}(a)}$. This implies that for all
        such functions $\gamma\cdot g|_{\eta^{-1}(a)} = g|_{\eta^{-1}(a)}$. Denote by $B'_a$ a minimal abelian subvariety of $B$ 
        which contains~$\Gamma$. Then $\Gamma$ is Zariski dense subset of $B'_a$; and therefore, for each $b\in B'_a$ we have 
        \begin{equation*}
         b\cdot g|_{\eta^{-1}(a)} = g|_{\eta^{-1}(a)}.
        \end{equation*}
        Thus,  there is a family of subgroups $B'_a\subset B$ for each $a\in\U$. Since a choice of a subgroup is a choice of a sublattice,
        i.e. of discrete data, all these subgroups coincide, denote by $B'$ this subgroup of $B$. By construction~\mbox{$\Mer(X)^{B'} = \Mer(X)$}. This contradicts
        to the assumption; thus, there is no linear dependency~\eqref{eq_linear_dependency} and $a(X) = \dim(X)$.

	Thus, $X$ is a Moishezon manifold. 
	The abelian varieties $A$ and $B$ do not contain projective lines, since the embedding of $\p^1$ should factor through the Albanese variety 
	of $\p^1$ which is a point. Therefore, $X$ does not contain a projective line either. Since $X$ carries an algebraic space structure 
	(see Remark \ref{rem: Artin}), \cite[Corollary, 1.4.6]{BCHM} implies that $X$ is projective and, consequently, K\"ahler. 
	
	Consider the embedding of an elliptic curve $E$ to $A \subset E^n$ where the point $x$ maps to $(x,-x,0,\dots,0)$. 
	Denote $(\pi^n)^{-1}(E)/F$ by $X_E$, it is isomorphic to $S\times_E S\times F^{n-3}$. Then $X_E$ maps surjectively 
	to a Kodaira surface; thus, $X_E$ is not in the Fujiki class~$\mathcal{C}$ (see \cite{class_C}).
	In particular, this implies that $X_E$ is not K\"ahler. Then $X$ contains a non-K\"ahler submanifold and we get a contradiction. 
\end{proof}
\begin{proposition}\label{proposition_k_W/F}
 The map $\Pi\colon W_F\to \p^{n-1}$ is the algebraic reduction.
\end{proposition}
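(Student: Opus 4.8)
The plan is to reduce the computation to Proposition \ref{prop: K(X)=K(A)} by identifying the meromorphic function field of $W_F$ with the $\SG_n$-invariants of $\Mer(X)$, where $X=(\pi^n)^{-1}(A)/F$. Since $\Pi$ is a holomorphic fiber space onto the projective variety $\PP^{n-1}$ whose general fibre is the abelian variety $F^{n-1}$ (in particular irreducible), it induces an injection $\Pi^*\colon\Mer(\PP^{n-1})\hookrightarrow\Mer(W_F)$. Thus it suffices to prove $\a(W_F)=n-1$ together with the surjectivity of $\Pi^*$, i.e. that every meromorphic function on $W_F$ is pulled back from the base.

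First I would pin down the relationship between $W_F$ and the manifold $X$ of the previous proposition. Using $\Sym^n(\pi)\circ\alpha=\alpha_E\circ\pi^n$ and $\Sigma\circ\alpha_E=\epsilon$, the fibre of $\Sigma\circ\Sym^n(\pi)$ over $0\in E$ is the image $\alpha\big((\pi^n)^{-1}(A)\big)=(\pi^n)^{-1}(A)/\SG_n$, because $\alpha_E^{-1}(\Sigma^{-1}(0))=\epsilon^{-1}(0)=A$. The Hilbert--Chow morphism $\delta\colon S^{[n]}\to\Sym^n S$ therefore restricts to a proper modification
\[
\delta|_W\colon W\longrightarrow (\pi^n)^{-1}(A)/\SG_n,
\]
which is equivariant for the diagonal $F$-action (that action commutes with $\SG_n$ and with $\delta$). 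Passing to meromorphic functions, and using that a finite quotient by a finite group computes invariants, I obtain $\Mer(W)\cong\Mer\big((\pi^n)^{-1}(A)\big)^{\SG_n}$, whence, after taking $F$-invariants,
\[
\Mer(W_F)=\Mer(W)^F\cong\Mer\big((\pi^n)^{-1}(A)\big)^{\SG_n\times F}=\Mer(X)^{\SG_n}.
\]

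Next I would feed in Proposition \ref{prop: K(X)=K(A)}. The map $\eta\colon X\to A$ is induced by the $\SG_n$-equivariant map $\pi^n$, so the isomorphism $\eta^*\colon\Mer(A)\xrightarrow{\sim}\Mer(X)$ is itself $\SG_n$-equivariant and restricts to an isomorphism on invariants:
\[
\Mer(X)^{\SG_n}\cong\Mer(A)^{\SG_n}=\Mer(A/\SG_n).
\]
Finally, $A/\SG_n$ is the image of $A=\epsilon^{-1}(0)$ in $\Sym^n E$, namely $\Sigma^{-1}(0)\cong\PP^{n-1}$ by Abel's theorem, so $\Mer(A)^{\SG_n}\cong\Mer(\PP^{n-1})$. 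Tracing the identifications, which are all compatible over $\PP^{n-1}$, shows that the composite $\Mer(\PP^{n-1})\hookrightarrow\Mer(W_F)\cong\Mer(\PP^{n-1})$ is the canonical one; hence $\Pi^*$ is onto. Therefore $\a(W_F)=n-1=\dim\PP^{n-1}$, and $\Pi$, being a holomorphic fiber space onto a projective variety inducing an isomorphism of function fields, is the algebraic reduction.

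The hard part will be the second paragraph: verifying that $\delta|_W$ is genuinely a modification onto the correct fibre of $\Sym^n S$ (it is an isomorphism over the open locus of reduced length-$n$ subschemes, the complement being a proper analytic subset), and checking that the two commuting actions of $F$ and $\SG_n$ descend compatibly through Hilbert--Chow on the level of function fields while remaining compatible with the two presentations of $\PP^{n-1}$, as $\Sigma^{-1}(0)$ on the $W_F$-side and as $A/\SG_n$ on the $X$-side. Everything else is formal manipulation of invariants.
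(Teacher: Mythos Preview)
Your proposal is correct and follows essentially the same route as the paper: the paper's proof simply asserts that $W/F$ is a resolution of singularities of $X/\SG_n$, whence $\Mer(W_F)\cong\Mer(X)^{\SG_n}\cong\Mer(A)^{\SG_n}\cong\Mer(\PP^{n-1})$ by Proposition~\ref{prop: K(X)=K(A)} and Abel's theorem. Your version is just a more careful unpacking of the first isomorphism via the Hilbert--Chow morphism and the commuting $F$- and $\SG_n$-actions, which the paper leaves implicit.
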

\begin{proof}
	The manifold $W/F$ is a resolution of singularities of $X/\SG_n$. Then Proposition~\ref{prop: K(X)=K(A)} implies
	\begin{equation*}
	\Mer(W/F) \cong\Mer(X)^{\SG_n}\cong \Mer(A)^{\SG_n}
	\end{equation*}
	Since by Abel theorem $A^{\SG_n} = \p^{n-1}$, we get the result.
\end{proof}
\begin{corollary}\label{corollary_alg_red_of_Q}
 The map $\Phi\colon Q\to \p^{n-1}$ is the algebraic reduction, and $\Mer(Q) = \Mer(W_F)$.
\end{corollary}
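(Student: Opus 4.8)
The plan is to deduce the statement from Proposition \ref{proposition_k_W/F} together with the finite cover $p\colon Q\to\Base$ of Theorem \ref{BG-th}, rather than by computing $\Mer(Q)$ from scratch. The two things I need are that $\a(Q)=n-1$ and that $\Phi$ is a fiber space onto a projective variety of that dimension; everything else is formal field theory.

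First I would record that $\Phi=\Pi\circ p$ is a holomorphic fiber space. It is a morphism (a composition of the finite morphism $p$ with the morphism $\Pi$), it is proper since $Q$ is compact, and by Proposition \ref{lemma_fiber_of_Phi_abelian} its general fiber $Q_x$ is an abelian variety, in particular irreducible. Thus $\Phi$ is a fiber space onto the projective variety $\PP^{n-1}$. Next I would compute the algebraic dimension. Since $p$ is finite and surjective, the pullback $p^*\colon\Mer(\Base)\hookrightarrow\Mer(Q)$ is an injection of fields making $\Mer(Q)$ an algebraic extension of $p^*\Mer(\Base)$ of degree at most $\deg p=n^2$. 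Hence the transcendence degrees agree, and by Proposition \ref{proposition_k_W/F}
\[
\a(Q)=\tr\deg_\CC\Mer(Q)=\tr\deg_\CC\Mer(\Base)=\a(\Base)=n-1.
\]
In particular $\dim\PP^{n-1}=n-1=\a(Q)$, so $\Phi^*\Mer(\PP^{n-1})\subseteq\Mer(Q)$ is an inclusion of fields of the same transcendence degree over $\CC$, and therefore $\Mer(Q)$ is a finite algebraic extension of $\Phi^*\Mer(\PP^{n-1})$.

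The key step — and the one I expect to be the main obstacle — is to upgrade this to the \emph{equality} $\Mer(Q)=\Phi^*\Mer(\PP^{n-1})$, i.e. to rule out that the degree-$n^2$ cover $p$ introduces genuinely new meromorphic functions (a nontrivial extension would produce an algebraic reduction lying strictly between $Q$ and $\PP^{n-1}$, and then $\Phi$ would only factor through, rather than equal, the algebraic reduction). Here I would invoke that $\Phi$ is a fiber space with irreducible general fiber: this forces $\Phi^*\Mer(\PP^{n-1})$ to be algebraically closed in $\Mer(Q)$. Indeed, any element of $\Mer(Q)$ algebraic over $\Phi^*\Mer(\PP^{n-1})$ but not lying in it would generate the function field of a generically finite cover $Y'\to\PP^{n-1}$ of degree $>1$ through which $\Phi$ factors, so that the general fiber of $\Phi$ would meet the several points of $Y'$ over the generic point and hence be reducible, contradicting Proposition \ref{lemma_fiber_of_Phi_abelian}. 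An algebraic extension of an algebraically closed subfield is trivial, so $\Mer(Q)=\Phi^*\Mer(\PP^{n-1})$, which is exactly the assertion that $\Phi$ is the algebraic reduction of $Q$.

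Finally I would tie the function fields together. Since $\Pi$ is the algebraic reduction of $\Base$ by Proposition \ref{proposition_k_W/F}, one has $\Pi^*\Mer(\PP^{n-1})=\Mer(\Base)$, whence
\[
\Mer(Q)=\Phi^*\Mer(\PP^{n-1})=p^*\Pi^*\Mer(\PP^{n-1})=p^*\Mer(\Base)\cong\Mer(\Base),
\]
giving $\Mer(Q)=\Mer(\Base)$ as claimed. I expect the only genuinely delicate point to be the algebraic-closedness argument of the previous paragraph; the identification of $\a(Q)$ and the verification that $\Phi$ is a fiber space are routine once Propositions \ref{lemma_fiber_of_Phi_abelian} and \ref{proposition_k_W/F} are in hand.
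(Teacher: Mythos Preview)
Your proposal is correct and follows essentially the same approach as the paper: both use that the finite cover $p$ forces $\a(Q)=\a(\Base)=n-1$, and then invoke the connectedness of the general fiber of $\Phi$ from Proposition~\ref{lemma_fiber_of_Phi_abelian} to rule out a nontrivial intermediate cover between $Q$ and $\PP^{n-1}$. The paper phrases this last step as showing that the induced finite map $p_0\colon Q_0\to\PP^{n-1}$ from the algebraic reduction $Q_0$ has degree~$1$, while you phrase it as $\Phi^*\Mer(\PP^{n-1})$ being algebraically closed in $\Mer(Q)$; these are the same argument.
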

\begin{proof}
 Since $Q$ is a finite cover of $W_F$ their algebraic dimensions are equal
 \begin{equation*}
  \a(W_F) = \a(Q).
 \end{equation*}
 Denote by $\xi\colon Q \to Q_0$ the algebraic reduction of $Q$; by definition of the algebraic reduction there exists a finite rational map:
 \begin{equation*}
  p_0\colon Q_0 \to \p^{n-1}
 \end{equation*}
 However, by Lemma \ref{lemma_fiber_of_Phi_abelian} for a dense set of points $x$ we know that 
 $\Phi^{-1}(x) = (p_0\circ\xi)^{-1}(x)$ are connected manifolds. Therefore, $p_0$ is a map of degree 1 
 and $\p^{n-1}$ is the algebraic reduction of $Q$.
\end{proof}

\section{Classes \texorpdfstring{$\mathcal{N}_k$}{N\_k} and submanifolds in products of Kodaira surfaces}\label{sec: class N}

This section is mostly auxiliary for our future discussion. We are going to introduce some classes of complex spaces that will be useful for distinguishing non-algebraic manifolds. We then use this notion to classify all algebraic subvarieties in products of Kodaira surface.
\begin{definition}
 Let $k\in\ZZ_{\geqslant 0}$. We call $\N_k$ the class of {complex spaces} $X$ such that there exists a subspace $M\subseteq X$ with $\dim(M) - \a(M)\geqslant k$.
\end{definition}
\begin{lemma}\label{lemma_first_property_Nk}
 The power of Kodaira surface $S^k$ is in the class $\N_k$ and 
 there is no submanifold $X\subseteq S^k$ such that $X\in \N_{k+1}$.
\end{lemma}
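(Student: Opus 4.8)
The plan is to treat the two assertions separately, the engine in both being the product formula for algebraic dimension (Lemma~\ref{lemma_alg_red_of_product}) together with the fact that the algebraic reduction of $S$ is the principal elliptic fibration $\pi\colon S\to E$ with fibre $F$.

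For the first claim I would simply take $M=S^k$ itself. Since $\a(S)=1$ and $\dim S=2$, iterating Lemma~\ref{lemma_alg_red_of_product} yields $\a(S^k)=k\,\a(S)=k$, while $\dim S^k=2k$; hence $\dim(S^k)-\a(S^k)=k$, so $S^k\in\N_k$ directly from the definition. The same lemma identifies the algebraic reduction of $S^k$ with the product map $\pi^k\colon S^k\to E^k$, whose fibre over any point is a translate of $F^k$, an abelian variety of dimension $k$. I will use this map throughout the second part.

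For the second claim it suffices to show that every (irreducible, compact) subspace $M\subseteq S^k$ satisfies $\dim(M)-\a(M)\leqslant k$: a subspace of a submanifold $X\subseteq S^k$ is itself a subspace of $S^k$, so this bound rules out $X\in\N_{k+1}$. First I would restrict $\pi^k$ to $M$ and set $\overline M=\pi^k(M)\subseteq E^k$. As the image of a compact space, $\overline M$ is a closed analytic subset of $E^k$, hence a projective subvariety of the abelian variety $E^k$, so $\a(\overline M)=\dim\overline M$. Since $\pi^k|_M\colon M\to\overline M$ is surjective, the leftmost inequality of Lemma~\ref{lem: alg dimension properties} gives $\a(M)\geqslant\a(\overline M)=\dim\overline M$. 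On the other hand, the general fibre of $\pi^k|_M$ is contained in a fibre of $\pi^k$, i.e.\ in a translate of $F^k$, and therefore has dimension at most $k$; as this generic fibre dimension equals $\dim M-\dim\overline M$, I conclude $\dim(M)-\a(M)\leqslant\dim(M)-\dim\overline M\leqslant k$, as required.

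The argument is essentially bookkeeping once the algebraic reduction $\pi^k$ of $S^k$ has been pinned down, so the only substantial input is the product formula of Lemma~\ref{lemma_alg_red_of_product}. The points that need care, rather than constituting a genuine obstacle, are the two facts used in the last paragraph: that the image $\overline M\subseteq E^k$ is projective (so that $\a(\overline M)=\dim\overline M$ and the lower bound $\a(M)\geqslant\dim\overline M$ from Lemma~\ref{lem: alg dimension properties} can be applied), and that the generic fibre dimension of $\pi^k|_M$ equals $\dim M-\dim\overline M$ even when $M$ is singular. Both are standard for surjective morphisms of irreducible compact complex spaces and are covered by the results quoted in the preliminaries.
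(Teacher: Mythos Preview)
Your proposal is correct and follows essentially the same route as the paper: both use Lemma~\ref{lemma_alg_red_of_product} to compute $\a(S^k)=k$ for the first assertion, and for the second both restrict $\pi^k$ to an arbitrary subspace, use that its image in $E^k$ is projective to bound $\a$ from below via Lemma~\ref{lem: alg dimension properties}, and bound the fibre dimension by $k$. Your version is slightly more explicit in observing that it suffices to bound $\dim M-\a(M)$ for every $M\subseteq S^k$ (since any subspace of $X$ is also a subspace of $S^k$), a point the paper's proof leaves implicit.
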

\begin{proof}
 The first assertion is due to Lemma \ref{lemma_alg_red_of_product}.
 Now consider a submanifold $X\subseteq S^k$ and its image under the map $\pi^k\colon S^k \to E^k$; denote $\pi^k(X)=Z$. Since $Z$ is algebraic, one has $\dim(Z)=\a(Z)$. But $\a(Z)\leqslant\a(X)$ by Lemma \ref{lem: alg dimension properties}, hence
\begin{equation*}
  \dim(X) - \a(X) \leqslant \dim(X) - \dim(Z) \leqslant k.
 \end{equation*}
Therefore, any $X\subseteq S^k$ is not in the class $\N_{k+1}$. 
\end{proof}
\begin{remark}
 The proof of Lemma \ref{lemma_alg_red_of_product} shows also that if the algebraic reduction 
 $\xi\colon X\to X_0$ of a manifold~$X$ is a regular map and all its fibers $\xi^{-1}(x)$ for 
 $x\in X_0$ are of same dimension, then $X\in\N_k$ where~\mbox{$k = \dim(X) - a(X)$}. 
\end{remark}

{
\begin{remark}
	Clearly, the classes $\N_k$ form the descending chain
	\[
	\N_0\supset\N_1\supset\N_2\supset\ldots\supset\N_k\supset\N_{k+1}\supset\ldots.
	\]
	By (\ref{eq: alg dimension bound}), the class $\N_0$ consists of all complex spaces, and if $X\in \N_k$ with $k>0$, then $X$ is non-algebraic. 
	Moreover, by Lemma \ref{lemma_first_property_Nk} we see that $S^k\in \N_k$ and $S^k\not\in \N_{k+1}$.
	So all classes $\N_k$ are non-empty and all embeddings $\N_{k+1}\subset\N_{k}$ are strict. 
\end{remark}
}

The classes $\N_k$ have the following properties.
\begin{lemma}\label{lemma_properties_of_Nk}
 {Let $X$ and $Y$ be two complex spaces.} Then the following assertions hold:
 \begin{enumerate}
  \item[$(1)$] {if $Y\in \N_k$ and $f: X\to Y$ is a surjective morphism, then $X\in \N_k$};
  \item[$(2)$] {if $X\in \N_k$, $f:X\to Y$ is a proper morphism, and $d<k$ is the dimension of a general fiber of $f: X\to f(X)$, then $Y\in \N_{k-d}$}.
  \item[$(3)$] if $X\in \N_1$, then $\dim(X)> a(X)$.
 \end{enumerate}
\end{lemma}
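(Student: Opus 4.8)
The plan is to treat the three parts by transporting a defect-realizing subspace through $f$ and controlling the defect $\dim(\cdot)-\a(\cdot)$ by means of the inequalities of Lemma~\ref{lem: alg dimension properties}. Throughout, recall that membership $X\in\N_k$ is witnessed by some irreducible subspace $M\subseteq X$ with $\dim(M)-\a(M)\geqslant k$.

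For $(1)$ I would \emph{pull back} the witness. Let $N\subseteq Y$ be irreducible with $\dim(N)-\a(N)\geqslant k$, and let $M\subseteq f^{-1}(N)$ be an irreducible component dominating $N$ (one exists because $f$, being surjective, restricts to a surjection $f^{-1}(N)\to N$). Applying Lemma~\ref{lem: alg dimension properties} to the surjective morphism $f|_M\colon M\to N$ gives
\[
\a(M)\leqslant \a(N)+\dim(f|_M)=\a(N)+\dim(M)-\dim(N),
\]
whence $\dim(M)-\a(M)\geqslant \dim(N)-\a(N)\geqslant k$, so $M$ witnesses $X\in\N_k$. The pleasant feature here is that the fibre dimension of $f|_M$ cancels, so no control over the size of the fibres is needed.

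For $(2)$ I would instead \emph{push forward} the witness. Let $M\subseteq X$ satisfy $\dim(M)-\a(M)\geqslant k$ and put $N=f(M)\subseteq Y$. Write $e=\dim(M)-\dim(N)$ for the dimension of a general fibre of the proper surjection $f|_M\colon M\to N$. Since $\a(N)\leqslant\a(M)$ by Lemma~\ref{lem: alg dimension properties}, we obtain
\[
\dim(N)-\a(N)\geqslant \dim(N)-\a(M)=\bigl(\dim(M)-\a(M)\bigr)-e\geqslant k-e,
\]
so it suffices to show $e\leqslant d$, after which $N$ witnesses $Y\in\N_{k-d}$. A general fibre of $f|_M$ is $M\cap f^{-1}(y)$ for general $y\in N$, which is contained in the fibre $f^{-1}(y)$ of $f$; by upper semicontinuity of fibre dimension, for $y$ lying off the degeneracy locus of $f$ this fibre has dimension $d$, giving $e\leqslant d$. \textbf{The main obstacle is precisely this last step:} it is only automatic when $M$ is \emph{not} contained in the locus $E_d=\{x:\dim_x f^{-1}(f(x))>d\}$ where the fibres of $f$ jump (a single blow-down already shows $e>d$ can occur for $M\subseteq E_d$). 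I expect one must argue that a subspace lying entirely inside $E_d$ and collapsing to low dimension cannot carry defect more than $d$ larger than that of its image — i.e. that the extra fibre directions over the degeneracy locus are themselves sufficiently algebraic — and this is where the hypothesis $d<k$ should enter.

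For $(3)$ I would sidestep fibre-dimension issues altogether and argue by contraposition through the notion of a Moishezon space. If $\dim(X)=\a(X)$, then $X$ is Moishezon, hence by Remark~\ref{rem: Artin} it carries an algebraic space structure; every closed subspace $M\subseteq X$ is then a proper algebraic space over $\CC$ and therefore Moishezon, so $\dim(M)-\a(M)=0$ for all such $M$. This means $X\notin\N_1$, and contrapositively $X\in\N_1$ forces $\dim(X)>\a(X)$. (Alternatively, $(3)$ follows from $(2)$ applied to the algebraic reduction $\xi\colon X\to X_0$: were $\dim(X)=\a(X)$, the general fibre of $\xi$ would have dimension $d=0<1\leqslant k$, and $(2)$ would place the projective variety $X_0$ in $\N_1$, which is absurd.)
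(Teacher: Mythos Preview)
Your arguments for (1) and (3) are correct and follow the same route as the paper. Two cosmetic differences: for (1) the paper simply takes the full preimage $Z=f^{-1}(M)$ rather than an irreducible component (your choice is arguably cleaner, since the paper's standing convention is that complex spaces are irreducible); for (3) the paper cites \cite[Corollary 3.9]{Ueno} directly for ``subspaces of Moishezon spaces are Moishezon'' instead of passing through Artin's theorem, but the logic is identical.

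For (2) your strategy is exactly the paper's: push forward the witness $M$ to $W=f(M)$ and use $\a(W)\leqslant\a(M)$. The paper then writes the chain
\[
\a(W)\leqslant \a(M)\leqslant \dim(M)-k\leqslant \dim W+d-k,
\]
i.e.\ it uses $\dim(M)\leqslant\dim f(M)+d$ \emph{without comment}. The obstacle you flag --- that this inequality can fail when $M$ sits inside the jump locus $E_d$ of $f$ --- is real, and the paper's proof does not address it. So you have not missed an idea present in the paper; rather, you have been more scrupulous than the paper and uncovered a point it glosses over. In the paper's actual applications of (2) the map $f$ is finite or generically finite (so $d=0$ and the degeneracy locus is controlled), which is presumably why the authors did not worry about it, but as written the step $\dim(M)\leqslant\dim W+d$ is unjustified in both your argument and theirs.
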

\begin{proof}
 We start with (1). Let $Y\in \N_k$. Then there is a subspace $M\subset Y$ such that $\dim(M) - a(M)\geqslant k$. {The set $Z=f^{-1}(M)$ is an analytic subset of $X$, and carries a natural complex subspace structure.} Then by Lemma \ref{lem: alg dimension properties} we have
 \begin{equation*}
  a(Z) \leqslant a(M) + \dim(f|_{Z})=a(M)+\dim Z-\dim M\leqslant\dim Z-k,
 \end{equation*}
 which implies $X\in\N_k$. To prove (2), take $M\subset X$ with $a(M)\leqslant\dim(M)-k$ and set $W=f(M)\subset Y$. 
 {By Remmert's proper mapping theorem, $W$ is an analytic subset of $Y$ (see e.g. \cite[III, Corollary 4.3]{GPR})}. By Lemma \ref{lem: alg dimension properties} one has
 \[
 a(W)\leqslant a(M)\leqslant\dim(M)-k\leqslant\dim W+d-k,
 \]
 so indeed $Y\in\N_{k-d}$.
 
 To prove (3), assume that $\dim(X) = \a(X)$. Then $X$ is a Moishezon space. However, any subspace $M$ of a Moishezon space is Moishezon \cite[Corollary 3.9]{Ueno}, so $\dim(M) = \a(M)$ and $X\not\in \N_1$.
\end{proof}

\subsection{Digression: nilmanifolds and products of Kodaira surfaces}
Consider $n$ possibly different Kodaira surfaces $S_1,\dots,S_n$ fibered over an elliptic curve $\pi_i\colon S_i\to E$. 
Denote by $\S = S_1\times\dots\times S_n$ their product. 
The manifold $\S$ admits a structure of abelian fibration 
\begin{equation*}
\Pcal \colon \S \to E^n.
\end{equation*}
We study the geometry of a fiber of $\Pcal$ over a curve $C$ in $E^n$.
The preimage of $C$ here is isomorphic to a fiber product:
\begin{equation*}
 \Pcal^{-1}(C) = \left(S_1 \times_E C\right)\times_C \dots \times_C \left(S_n \times_E C\right) \xrightarrow{\Pcal} C.
\end{equation*}
In case when $C$ is an elliptic curve, each surface $S_i \times_E C$ is a Kodaira surface; in particular, it is in the class $\N_1$. 
Moreover, for any curve $C$ we have natural surjective morphisms  $q_i\colon S_i \times_E C\to S_i$. By Lemma \ref{lemma_properties_of_Nk}
this implies that $S_i \times_E C$ are also in the class $\N_1$. Thus, all these surfaces are non-algebraic.
Denote by $q^C_i$ the composition of $q_i$ and the projection from $\Pcal^{-1}(C)$
to its multiple $S_i \times_E C$:
\begin{equation}\label{eq_projections}
 q^C_{i}\colon \Pcal^{-1}(C)\to S_i\times_E C\overset{q_i}{\to} S_i.
\end{equation}
Now we are ready to prove that most submanifolds of $\S$ are non-algebraic.
\begin{lemma}\label{lemma_subvar_product_S}
 If $X$ is a submanifold of $\S$, then either it is contained in the fiber of $\Pcal$ over a point of $E^n$ and this fiber is an abelian variety, 
 or $X\in\N_1$.
\end{lemma}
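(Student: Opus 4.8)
The plan is to analyze the image $Z = \Pcal(X)\subseteq E^n$ and to split into two cases according to $\dim Z$. Since $X$ is compact and irreducible, $Z$ is an irreducible closed analytic subset of the abelian variety $E^n$, hence a projective subvariety, and each fibre of $\Pcal$ over a point $(e_1,\dots,e_n)$ is $\prod_{i=1}^n \pi_i^{-1}(e_i)$, a product of elliptic curves, i.e. an abelian variety. Thus if $\dim Z = 0$, then $Z$ is a single point and $X$ is contained in such a fibre, which gives the first alternative. It then remains to treat the case $\dim Z\geqslant 1$ and to show that $X\in\N_1$.

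So assume $\dim Z\geqslant 1$. First I would observe that some coordinate projection is non-constant on $Z$: since $Z\subseteq E^n$ is irreducible of positive dimension, not all of the restrictions $\pr_i|_Z\colon Z\to E$ of the coordinate projections can be constant, so after reindexing $\pr_1|_Z\colon Z\to E$ is surjective. Writing $\psi_i=\pr_{S_i}|_X\colon X\to S_i$ for the projection to the $i$-th factor, one has $\pi_1\circ\psi_1=\pr_1\circ\Pcal|_X$, whose image is $\pr_1(Z)=E$. Hence $\pi_1\circ\psi_1\colon X\to E$ is surjective, and in particular the irreducible subvariety $\psi_1(X)\subseteq S_1$ dominates $E$ under $\pi_1$.

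The next step is to upgrade this to surjectivity of $\psi_1$ onto $S_1$. The subvariety $\psi_1(X)$ has dimension $1$ or $2$. It cannot be a curve: every compact curve is projective, and every algebraic subvariety of the Kodaira surface $S_1$ is contained in a fibre of $\pi_1$ (see \cite{BPV} and the discussion in Section \ref{subsec: Kodaira}), so a curve in $S_1$ cannot dominate $E$. Therefore $\psi_1(X)=S_1$, i.e. $\psi_1\colon X\to S_1$ is surjective. Finally, $S_1$ is a Kodaira surface, so $S_1\in\N_1$ by Lemma \ref{lemma_first_property_Nk} (the case $k=1$), and Lemma \ref{lemma_properties_of_Nk}(1) applied to the surjective morphism $\psi_1$ yields $X\in\N_1$, as desired.

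I expect the only point requiring genuine care to be the exclusion of the curve case, namely the classical input that a Kodaira surface carries no curve dominating its base $E$ (equivalently, that its only curves are the fibres of the algebraic reduction $\pi$); everything else is a formal consequence of the properties of the classes $\N_k$ already established above.
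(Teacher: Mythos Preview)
Your proof is correct and follows essentially the same strategy as the paper: find a surjection from (a piece of) $X$ onto one of the Kodaira surfaces $S_i$, then invoke $S_i\in\N_1$ together with Lemma~\ref{lemma_properties_of_Nk}(1).

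The only difference is a minor streamlining on your part. The paper first chooses a curve $C\subset \Pcal(X)$, sets $M=\Pcal^{-1}(C)\cap X$, and argues that some $q^C_i\colon M\to S_i$ is surjective, whence $M\in\N_1$ and therefore $X\in\N_1$. You skip this intermediate step and project $X$ itself directly to $S_1$, using the fact that any irreducible closed analytic subset of $S_1$ dominating $E$ must be all of $S_1$ (curves in a Kodaira surface lie in fibres of the algebraic reduction). Both arguments hinge on exactly this input about Kodaira surfaces; the paper's detour through a curve $C$ is not essential, and your version is marginally cleaner.
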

\begin{proof}
 Assume that $\Pcal(X)$ is not a point. 
 Consider a curve $C$ inside $\Pcal(X)$ and the preimage of this curve $M  = \Pcal^{-1}(C)\cap X$.
 As we saw in \eqref{eq_projections}, there are $n$ projections $q^C_i$ from $M$ to Kodaira surfaces $S_i$. 
 Since $\Pcal(M)$ is a curve, there exists a number $1\leqslant i\leqslant n$ such that the 
 map $q^C_{i}\colon M\to S_i$ is surjective. By Lemma \ref{lemma_properties_of_Nk}
 this implies that $M\in \N_1$. Therefore, $X\in \N_1$.
\end{proof}
\begin{remark}
  According to Guan's approach, the BG-manifolds are constructed as resolutions of nilmanifolds $M/S$, where $M$ is a nilpotent Lie group and $S$ is a finite group in $\Aut(M)$ 
  (see Remark \ref{Guan-constr}). 
  Thus, Lemma \ref{lemma_subvar_product_S} reminds the following folklore statement studued by the first author in 1970-s: 
Let~$N/\Gamma$ be the quotient of a nilpotent Lie group $N$ 
  	by its discrete subgroup $\Gamma$. Then any compact complex curve $C\subset N/\Gamma$ is in fact contained in the abelian 
  	subvariety $\CC^n/ \ZZ^{2n}$ where $\CC^n$ is a shift of a subgroup of $N$ and $\ZZ^{2n}\subset \Gamma$ is also a subgroup.
  	
The counterexample to the statement above given by the curve $C$ in abelian variety $A$ and the Kodaira-type manifold $M_{\L}$ with the 
base $A$ constructed by some linear bundle $\L$ (see Section \ref{subsec_Kod-type}), such that this linear bundle is trivial over $C$.
  	
  	Related questions were studied in a work \cite[Section 8]{HW}, where
  	they proved that if $Z$ is compact complex space in the nilmanifold $G/\Gamma$, then there is closed complex subgroup 
  	$H < G$ such that the quotient~$H/\Gamma$ is compact, and after modification of $\Gamma$ the fibration $Z \rightarrow Z/H$ can be extended to the fibration af the ambient homogeneous space.
\end{remark}

\section{Submanifolds of BG-manifolds}\label{sec: submanifolds of BG}

The goal of this section is to give a description on submanifolds of BG-manifolds. Consider the diagonal action of the elliptic curve $F$ on the power of Kodaira surface $S^n$ and the quotient manifold $S^n/F$. Then $\pi^n$ induces the morphism
\begin{equation*}
 \pi_F^n\colon S^n/F \to E^n.
\end{equation*}
Denote by $p_i$  and $P_i$ the projections from products $E^n$ and $S^n$ to their $i$-th components:
\begin{equation*}
 p_i\colon E^n \to E,\ \ \ \
 P_i\colon S^n \to S.
\end{equation*}
To describe submanifolds of $Q$, we first need to describe those of $S^n/F$.

\begin{theorem}\label{thm_classification_for_Sn/F}
 If $X$ is a submanifold of $S^n/F$ and $Z = \pi_F^n(X)$, then we are in one of the following situations:
 \begin{enumerate}
  \item $Z$ is a point and $X$ is a subvariety of the fiber $F^{n-1}\cong (\pi^n_F)^{-1}(Z)$.
  \item $Z$ is a curve and $X$ can be algebraic, K\"ahler or non-K\"ahler depending on $Z$ and the Kodaira surface $S$.
  If $Z$ is smooth we have following possibilities:
  \begin{enumerate}
   \item If there exists $i$ and $j$ such that $\deg(p_i|_Z)\neq \deg(p_j|_Z)$, then $X$ is non-K\"ahler in class $\N_1$.
   \item If for all $i$ and $j$ we have $\deg(p_i|_Z)= \deg(p_j|_Z)$, then $X$ is K\"ahler.
   \item Let $\L$ be the line bundle on $E$ in the construction of the Kodaira surface. 
   If for all $i$ and~$j$  we have $(p_i|_Z)^*\L \cong (p_j|_Z)^*\L$, then $X$ is algebraic. 
   In particular, any variety in the preimage of $Z= \{(x,\dots,x)|\ x\in E\}\subset E^n$ is algebraic.
  \end{enumerate}
	\item $\dim(Z) \geqslant 2$ and $X$ is a non-K\"ahler manifold in the class $\N_1$.
 \end{enumerate}
\end{theorem}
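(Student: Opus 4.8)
The plan is to read every case off the elliptic fibration structure of the preimage $(\pi_F^n)^{-1}(Z)$. Write $\rho\colon S^n\to S^n/F$ for the (free) quotient map, so $\pi^n=\pi_F^n\circ\rho$, and for a smooth curve $Z\subset E^n$ put $S_i^Z=(p_i|_Z)^*S$, the principal $F$-bundle over $Z$ pulled back from $\pi\colon S\to E$ along $p_i|_Z$. Its Chern class in $H^2(Z,\Lambda)\cong\Lambda$ is $\deg(p_i|_Z)\cdot\delta_0$, where $F=\CC/\Lambda$ and $\delta_0\neq 0$ is the Chern class of $S\to E$ (nonzero since $c_1(\L)\neq 0$). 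Then $(\pi^n)^{-1}(Z)=S_1^Z\times_Z\dots\times_Z S_n^Z$ and $(\pi_F^n)^{-1}(Z)=(S_1^Z\times_Z\dots\times_Z S_n^Z)/F_{\mathrm{diag}}$, a principal $F^{n-1}$-bundle over $Z$ recorded by the differences $[S_i^Z]-[S_j^Z]$; for each pair $i,j$ I introduce the \emph{difference surface} $\mathcal{D}_{ij}=(S_i^Z\times_Z S_j^Z)/F_{\mathrm{diag}}$, a principal $F$-bundle over $Z$ with Chern class $(\deg(p_i|_Z)-\deg(p_j|_Z))\delta_0$, together with the projection $\beta_{ij}\colon(\pi_F^n)^{-1}(Z)\to\mathcal{D}_{ij}$. \emph{Case 1} is then immediate: if $Z$ is a point, $X\subset(\pi_F^n)^{-1}(Z)\cong F^{n-1}$ is a closed subvariety of an abelian variety, hence algebraic by Chow.

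\emph{Case 2 (smooth curve).} For (c), the hypothesis $(p_i|_Z)^*\L\cong(p_j|_Z)^*\L$ makes all $S_i^Z$ isomorphic as principal $F$-bundles; fixing such isomorphisms, the map $(s_1,\dots,s_n)\mapsto(s_1-s_2,\dots,s_1-s_n)$ trivialises the diagonal quotient and gives $(\pi_F^n)^{-1}(Z)\cong Z\times F^{n-1}$, which is projective, so every submanifold $X$ is algebraic; the diagonal $Z=\{(x,\dots,x)\}$ is the stated special instance. For (b), equal degrees force every difference class $(\deg(p_i|_Z)-\deg(p_j|_Z))\delta_0$ to vanish, so the $F^{n-1}$-bundle $(\pi_F^n)^{-1}(Z)\to Z$ has vanishing real Chern class; by the standard criterion for principal torus bundles over a K\"ahler base its total space is K\"ahler, hence so is the submanifold $X$. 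For (a), pick $i,j$ with $\deg(p_i|_Z)\neq\deg(p_j|_Z)$; then $\mathcal{D}_{ij}$ is a principal elliptic bundle over the curve $Z$ with nonzero Chern class, hence a non-K\"ahler surface with $a(\mathcal{D}_{ij})=1$, so $\mathcal{D}_{ij}\in\N_1$ and $\mathcal{D}_{ij}\notin\mathcal{C}$. I claim $\beta_{ij}|_X\colon X\to\mathcal{D}_{ij}$ is surjective: its image is irreducible and dominates $Z$, so it is either all of $\mathcal{D}_{ij}$ or a curve finite over $Z$; but a nontrivial principal $F$-bundle over a curve admits no multisection (a multisection would, after pulling back to its normalisation, produce a section and so force the nonzero Chern class to vanish after multiplication by the degree), excluding the second option. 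Lemma~\ref{lemma_properties_of_Nk}(1) then yields $X\in\N_1$, and since a surjective holomorphic image of a compact K\"ahler manifold lies in Fujiki's class $\mathcal{C}$ (see \cite{class_C}) while $\mathcal{D}_{ij}\notin\mathcal{C}$, the manifold $X$ cannot be K\"ahler.

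\emph{Case 3 ($\dim Z\geq 2$).} I reduce to Case 2(a). The key point is that one can always choose a smooth curve $C\subset Z$ and indices $i,j$ with $\deg(p_i|_C)\neq\deg(p_j|_C)$: if instead $\deg(p_i|_C)=\deg(p_j|_C)$ for every curve $C\subset Z$, then $(p_i^*\omega_E-p_j^*\omega_E)|_Z=0$ in $H^2(Z,\mathbb{R})$, so the morphism $p_i|_Z-p_j|_Z\colon Z\to E$ (group law on $E$) is homologically trivial and therefore constant; thus all $p_i|_Z$ agree up to translation and $Z$ equals the image of $z\mapsto(p_1(z),p_1(z)+c_2,\dots,p_1(z)+c_n)$, of dimension $\leq 1$, contradicting $\dim Z\geq 2$. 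Fixing such a smooth $C$ with unequal degrees (Bertini), the subvariety $X\cap(\pi_F^n)^{-1}(C)$ dominates $C$, and the Case 2(a) argument over $C$ shows it surjects onto the non-K\"ahler surface $\mathcal{D}_{ij}\notin\mathcal{C}$ formed over $C$; hence $X$ contains a non-K\"ahler subvariety lying in $\N_1$, so $X\in\N_1$ and $X$ is itself non-K\"ahler.

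\emph{Main obstacle.} The delicate point throughout is passing from properties of the \emph{total space} of the fibration to an \emph{arbitrary} submanifold $X$ with the prescribed image, since a priori $X$ could be a low-dimensional (e.g. multisection) piece. This is exactly what the no-multisection observation rules out in the non-K\"ahler cases: it forces $X$ to surject onto the non-K\"ahler difference surface $\mathcal{D}_{ij}$, after which the class $\mathcal{C}$ obstruction and the inheritance property of $\N_1$ in Lemma~\ref{lemma_properties_of_Nk} finish the argument. The remaining inputs I would need to state carefully are the K\"ahler/algebraicity criteria for principal torus bundles over a smooth curve, namely that a vanishing real Chern class produces a K\"ahler total space and that isomorphic pulled-back bundles produce a projective one.
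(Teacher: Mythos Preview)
Your treatment of Cases 1 and 2 is correct and matches the paper closely: your difference surfaces $\mathcal{D}_{ij}$ are exactly the paper's Kodaira-type surfaces $M_{\L_i\otimes\L_j^{\vee}}$ (Lemmas~\ref{lemma_kodaira_type}--\ref{lemma_fiber_product_of_Kodaira_type}), and your no-multisection argument is an equivalent rephrasing of the paper's observation that $Z$ is the algebraic reduction of $\mathcal{D}_{ij}$, so every curve in $\mathcal{D}_{ij}$ is vertical (Lemma~\ref{lemma_subvar_of_S^n/F_over_curves}). The K\"ahler and algebraic criteria in 2(b)--(c) are likewise the content of Lemma~\ref{lemma_algebraic_fibers_over_curve}.

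There is, however, a genuine gap in Case 3. From ``$\deg(p_i|_C)=\deg(p_j|_C)$ for all curves $C\subset Z$'' you correctly deduce $(p_i^*\omega_E-p_j^*\omega_E)|_Z=0$ in $H^2(Z,\mathbb{R})$, but this does \emph{not} make $p_i-p_j\colon Z\to E$ homologically trivial or constant. Since $(p_i-p_j)^*dz=p_i^*dz-p_j^*dz$, one has $(p_i-p_j)^*\omega_E=p_i^*\omega_E+p_j^*\omega_E$ plus cross-terms, not $p_i^*\omega_E-p_j^*\omega_E$; already on the anti-diagonal $\{(x,-x)\}\subset E^2$ one has $p_1^*\omega_E=p_2^*\omega_E$ while $p_1-p_2=2x$ is non-constant. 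So your route to the existence of the bad curve $C$ fails.

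The paper handles Case 3 by first noting that when $\dim Z\geqslant 2$ some pairwise projection $(p_i\times p_j)\colon Z\to E^2$ is surjective. It then projects $X$ to $S^2/F$ and uses that $\pi_F^2$ is the algebraic reduction of $S^2/F$ (so every divisor there is a pullback from $E^2$, Lemma~\ref{lemma_subvar_of_S2_by_F_surj_on_base}) to force the image to be all of $S^2/F$, whence $X\in\N_1$; for non-K\"ahlerness it then picks a curve $Z'\subset Z$ with $\deg(p_i|_{Z'})\neq\deg(p_j|_{Z'})$. You can repair your own argument the same way: once $(p_i\times p_j)(Z)=E^2$, take $C\subset Z$ an irreducible curve mapping onto $E\times\{0\}$; then $\deg(p_i|_C)>0$ and $\deg(p_j|_C)=0$, and your reduction to Case 2(a) goes through.
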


The prove this statement, we need some preparation.  

\subsection{Kodaira-type manifolds} \label{subsec_Kod-type}

As we saw above, the Kodaira surface $S$ belongs to the class $\N_1$; in particular, it is non-algebraic.
Now we are going to show that a construction, similar to construction of $S$, describes a big class of submanifolds in $S^n/F$. On the other hand, frequently this construction leads to a manifold of class $\N_1$.

Fix a complex number $q\in\CC^{*}$ with $|q|>1$, and denote by $q^{\ZZ}$ the following subgroup of $\CC^*$:
\begin{equation*}
 q^{\ZZ} = \left\{q^m:\ m\in\ZZ\right\}.
\end{equation*}
Denote by $F$ the elliptic curve $\CC^*/q^{\ZZ}$.

\begin{definition}
Consider a manifold $X$ and a line bundle $\L$ on $X$.
The group $q^{\ZZ}$ acts on the total space of line bundle without zero-section $\L^*$.
Then we call  
 \begin{equation*}
  M_{\L} = \L^{*}/q^{\ZZ}.
 \end{equation*}
a \emph{Kodaira-type manifold with the base $X$}.
\end{definition}

By construction there is a canonical projection from $M_{\L}$ to $X$:
\begin{equation*}
 \pi_{\L}\colon M_{\L}\to X.
\end{equation*}
Note that if $X$ is an elliptic curve and $c_1(\L)\neq 0$, then the Kodaira-type manifold $M_{\L}$ is a (primary) Kodaira surface 
(see Subsection \ref{subsec: Kodaira}). More generally, if $X$ is a smooth curve then the Kodaira-type manifold over $X$ is non-K\"ahler:
\begin{lemma}\label{lemma_surfaces_of_Kodaira_type}
 Let $C$ be a smooth curve and $\L$ be a line bundle on $C$ with $c_1(\L)\neq 0$. Then $M_{\L}$ is non-K\"ahler and belongs to the class $\N_1$.
\end{lemma}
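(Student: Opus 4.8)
The plan is to realise $M_{\L}$ as a compact complex \emph{surface} fibred over $C$ with elliptic fibres, to prove it is non-K\"ahler by showing that its first Betti number is odd, and then to deduce membership in $\N_1$ from the theory of surfaces. Since $\dim_\CC C=1$, the manifold $M_{\L}$ has complex dimension $2$, and $\pi_{\L}\colon M_{\L}\to C$ is a holomorphic fibre bundle with fibre the elliptic curve $F=\CC^*/q^{\ZZ}$; it is compact and smooth because $q^{\ZZ}$ acts freely and properly discontinuously on $\L^{*}$ (as $|q|>1$, no $q^{m}$ with $m\neq 0$ has a fixed point).

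First I would fix a Hermitian metric on $\L$, so that the norm gives a well-defined real function on $\L^{*}$, and let $P=\{\,|\cdot|=1\,\}\subset\L^{*}$ be the associated principal $\Sph^1$-bundle over $C$, whose Euler class is exactly $c_1(\L)$. The radial action trivialises the norm direction, yielding a diffeomorphism $\L^{*}\cong P\times\mathbb{R}$ under which the generator $q$ acts by $(p,t)\mapsto(e^{i\alpha}\cdot p,\,t+\log|q|)$, where $e^{i\alpha}=q/|q|$ and $e^{i\alpha}$ acts on $P$ through the principal $\Sph^1$-action. Consequently $M_{\L}=\L^{*}/q^{\ZZ}$ is the mapping torus of the homeomorphism $h\colon P\to P$ given by the action of $e^{i\alpha}$. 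The crucial observation is that $h$ is isotopic to $\id_P$ through $s\mapsto$ (action of $e^{is\alpha}$), $s\in[0,1]$, so $h^{*}=\id$ on $H^{\bullet}(P)$.

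Next I compute Betti numbers. Since $c_1(\L)\neq 0$, in the Gysin sequence of the circle bundle $P\to C$ the Euler-class map $H^0(C)\to H^2(C)$, i.e. $\ZZ\xrightarrow{\,c_1(\L)\,}\ZZ$, is injective, whence $b_1(P)=b_1(C)=2g$, with $g$ the genus of $C$. Because the monodromy $h^{*}$ is trivial, the Leray--Serre spectral sequence (equivalently the Wang sequence) of $P\hookrightarrow M_{\L}\to\Sph^1$ degenerates and one reads off
\[
b_1(M_{\L})=b_0(P)+b_1(P)=1+2g .
\]
Thus $b_1(M_{\L})$ is odd, and therefore $M_{\L}$ admits no K\"ahler metric. (As a sanity check, $C=E$ elliptic recovers $b_1=3$ for the Kodaira surface, and $C=\PP^1$, $\L=\OP(-1)$ gives a Hopf surface with $b_1=1$.)

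Finally, to establish $M_{\L}\in\N_1$ I take $M=M_{\L}$ in the definition and show $\dim(M_{\L})-\a(M_{\L})\geqslant 1$. Pulling back along $\pi_{\L}$ gives $\Mer(C)\hookrightarrow\Mer(M_{\L})$, so $\a(M_{\L})\geqslant 1$; conversely a Moishezon surface is projective, hence K\"ahler, so the non-K\"ahler surface $M_{\L}$ is not Moishezon, giving $\a(M_{\L})\leqslant 1<2=\dim(M_{\L})$. Hence $M_{\L}\in\N_1$, and in fact $\a(M_{\L})=1$ with $\pi_{\L}$ its algebraic reduction. The main obstacle is the Betti-number computation of the third paragraph: one must correctly combine the hypothesis $c_1(\L)\neq 0$ (which is precisely what forces $b_1(P)=2g$ rather than $2g+1$) with the triviality of the mapping-torus monodromy on cohomology. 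Once this is in place, both the non-K\"ahler conclusion and membership in $\N_1$ follow formally from surface theory and Lemma~\ref{lem: alg dimension properties}.
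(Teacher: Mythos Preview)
Your proof is correct and follows the same overall strategy as the paper: show $b_1(M_{\L})=2g(C)+1$ is odd, conclude non-K\"ahlerness, and then deduce $\N_1$. The paper simply cites \cite[Proposition~V.5.3(ii)]{BPV} for the Betti-number formula, whereas you supply the computation directly via the Gysin sequence for the circle bundle $P\to C$ and the Wang sequence for the mapping torus; both are valid and yield the same number. For the $\N_1$ step your route is in fact a bit cleaner than the paper's: you use that a smooth compact Moishezon surface is projective, hence K\"ahler, so non-K\"ahlerness immediately gives $\a(M_{\L})\leqslant 1<2$ and thus $M_{\L}\in\N_1$ with $M=M_{\L}$. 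The paper instead argues that $M_{\L}$ is not bimeromorphic to any K\"ahler surface (since $b_1$ is a bimeromorphic invariant) and then invokes the Enriques--Kodaira classification to reach the same conclusion.
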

\begin{proof}
 By \cite[Proposition V.5.3(ii)]{BPV} we have $b_1(M_{\L}) = 2g(C)+1$. This implies that $M_{\L}$ is not K\"ahler.
 Similar to the proof of Lemma \ref{lemma_map_to_Kodaira}, we get that $M_{\L}$ is not bimeromorphic to a compact K\"ahler surface. The Enriques-Kodaira classification implies then that $M_{\L}$ is in the class $\N_1$.
\end{proof}

Given a complex manifold $X$, consider two line bundles $\L_1$, $\L_2$ on $X$, a fixed complex number $q$ in $\CC^{*}$ and the fibered product 
\begin{equation*}
 Y = M_{\L_1}\times_{X} M_{\L_2}.
\end{equation*}
If we endow $Y$ with a the diagonal action of $F$ and consider the quotient manifold, we will get a Kodaira-type manifold.

\begin{lemma}\label{lemma_kodaira_type}
 Assume that $X$ is a manifold, $\L_1$ and $\L_2$ are line bundles on $X$. If $Y$ 
 is a fibered product of Kodaira-type manifolds $M_{\L_1}\times_{X} M_{\L_2}$ with diagonal action of $F = \CC^*/q^{\ZZ}$, then $Y/F$ is also a Kodaira-type manifold and
 \begin{equation*}
  Y/F \cong M_{\L_1\otimes \L_2^{\vee}}.
 \end{equation*}
\end{lemma}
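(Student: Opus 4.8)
The plan is to realize both sides as quotients of a single common total space and then to match the quotient groups explicitly. Set $P=\L_1^*\times_X\L_2^*$, the fibered product of the two punctured line bundles, equipped with the fiberwise $\CC^*\times\CC^*$-action $(\lambda,\mu)\cdot(v_1,v_2)=(\lambda v_1,\mu v_2)$.

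First I would unwind the definitions. Since $M_{\L_i}=\L_i^*/q^{\ZZ}$ is formed fiberwise over $X$, the fibered product is
\[
Y=M_{\L_1}\times_X M_{\L_2}\cong P/(q^{\ZZ}\times q^{\ZZ}),
\]
where $q^{\ZZ}\times q^{\ZZ}\subset\CC^*\times\CC^*$ is the evident subgroup. The diagonal action of $F=\CC^*/q^{\ZZ}$ on $Y$ lifts to the diagonal torus $\CC^*_{\mathrm{diag}}=\{(\lambda,\lambda)\}\subset\CC^*\times\CC^*$, so $Y/F\cong P/G$, where $G$ is the subgroup generated by $q^{\ZZ}\times q^{\ZZ}$ and $\CC^*_{\mathrm{diag}}$. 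A short computation identifies
\[
G=\{(\alpha,\beta)\in\CC^*\times\CC^*:\ \alpha/\beta\in q^{\ZZ}\},
\]
and shows that $(\alpha,\beta)\mapsto\alpha/\beta$ induces an isomorphism $G/\CC^*_{\mathrm{diag}}\cong q^{\ZZ}$.

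Second, I would construct the contraction map to the target. Over each $x\in X$, a nonzero $v_2\in(\L_2)_x$ determines a dual covector $v_2^\vee\in(\L_2^\vee)_x$ with $\langle v_2^\vee,v_2\rangle=1$; in a local frame this is holomorphic in $v_2\in\CC^*$, so
\[
\Psi\colon P\to(\L_1\otimes\L_2^\vee)^*,\qquad \Psi(v_1,v_2)=v_1\otimes v_2^\vee,
\]
is a well-defined holomorphic $X$-morphism landing off the zero-section. I would then check that $\Psi$ is surjective and that, for points over the same $x$, one has $\Psi(v_1,v_2)=\Psi(v_1',v_2')$ precisely when $(v_1',v_2')=(\lambda v_1,\lambda v_2)$ for some $\lambda\in\CC^*$; hence $\Psi$ exhibits $P/\CC^*_{\mathrm{diag}}\cong(\L_1\otimes\L_2^\vee)^*$.

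Finally I would combine the two steps. From the identity $\Psi(\alpha v_1,\beta v_2)=(\alpha/\beta)\,\Psi(v_1,v_2)$ it follows that $G$ acts on $(\L_1\otimes\L_2^\vee)^*$ through $G/\CC^*_{\mathrm{diag}}\cong q^{\ZZ}$, acting by the standard homothety. Therefore
\[
Y/F\cong P/G\cong\big(P/\CC^*_{\mathrm{diag}}\big)\big/\big(G/\CC^*_{\mathrm{diag}}\big)\cong(\L_1\otimes\L_2^\vee)^*/q^{\ZZ}=M_{\L_1\otimes\L_2^\vee},
\]
and this isomorphism is compatible with the projections to $X$ by construction. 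The computation is essentially mechanical; the only points requiring care are the well-definedness and holomorphy of the contraction $v_2\mapsto v_2^\vee$, and the bookkeeping ensuring that the fibers of $\Psi$ (the diagonal $\CC^*$-orbits) lie inside $G$ with quotient exactly $q^{\ZZ}$, so that the two quotient operations commute.
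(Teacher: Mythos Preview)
Your proof is correct and follows essentially the same approach as the paper: both arguments hinge on the contraction map $(v_1,v_2)\mapsto v_1\otimes v_2^{\vee}$ and identify its fibers with the diagonal $\CC^*$-orbits, leaving a residual $q^{\ZZ}$-action. The only difference is organizational---you work upstairs on the common cover $P=\L_1^*\times_X\L_2^*$ and pass to successive group quotients, whereas the paper defines the map directly on $Y$, checks well-definedness modulo $q^{\ZZ}\times q^{\ZZ}$ and $F$-invariance by hand, and then writes down an explicit inverse.
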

\begin{proof}
 We construct a map from $Y/F$ to $M_{\L_1\otimes \L_2^{\vee}}$, starting with an auxiliary map
 \begin{align*}
  \widetilde{\Theta}\colon Y = \left(\L_1^{*}/q^{\ZZ}\right)\times_{X} \left(\L_2^{*}/q^{\ZZ}\right) &\to \left(\L_1\otimes \L_2^{\vee}\right)^{*}/q^{\ZZ};\\
  \widetilde{\Theta}(p, l_1, l_2) & =  l_1\otimes l_2^{\vee}.
 \end{align*}

 Here $p\in X$ is a point, $l_1$ and $l_2$ are points on the fibers $(\L_1^*)_p$ and $(\L_2^*)_p$
 modulo action of the group~$q^{\ZZ}$ and by $l_2^{\vee}$ we denote the unique linear functional 
 on $(\L_2)_p$ which maps $l_2$ to $1$. The map $\widetilde{\Theta}$ is well-defined. If we change $l_1$ and $l_2$ by another representatives $q^{d_1}\cdot l_1$ 
 and $q^{d_2}\cdot l_2$ in fibers~$(\L_1^*)_p$ and $(\L_2^*)_p$ of the quotient-sets $(\L_1^*)_p/q^{\ZZ}$ and $(\L_2^*)_p/q^{\ZZ}$, then
 \begin{equation*}
  \widetilde{\Theta}\left (p, q^{d_1}\cdot l_1, q^{d_2}\cdot l_2\right ) = q^{d_1-d_2}\cdot(l_1\otimes l_2^{\vee}) = l_1\otimes l_2^{\vee}.
 \end{equation*}
 The last equality holds, since we consider $l_1\otimes l_2^{\vee}$ as an element in the 
 quotient-set $\left(\L_1\otimes \L_2^{\vee}\right)_p^{*}/q^{\ZZ}$. 
 
 Moreover, the map $\widetilde{\Theta}$ invariant by the action of $F$ on $Y$. Since the action of $q^{\ZZ}$ commutes with 
 the map~$\widetilde{\Theta}$, it suffices to show that for any representative $\lambda \in \CC$ of an element of $F$ 
 the action of $\lambda$ commutes with $\widetilde{\Theta}$:
 \begin{equation*}
  \widetilde{\Theta}\left(\lambda\cdot (p,l_1,l_2)\right ) = \widetilde{\Theta}\left (p,\lambda\cdot l_1,\lambda\cdot l_2\right ) = 
  \lambda\cdot l_1\otimes (\lambda\cdot l_2)^{\vee} = \lambda\cdot l_1\otimes\lambda^{-1}\cdot l_2^{\vee} = l_1\otimes l_2^{\vee}.
 \end{equation*}
 Therefore, the map $ \widetilde{\Theta}$ factors through the quotient-manifold $Y/F$ and we get the following map:
 \begin{equation*}
  \Theta\colon Y/F \to M_{\L_1\otimes \L_2^{\vee}}.
 \end{equation*}
 Note that any element of $\left(\L_1\otimes \L_2^{\vee}\right)_p$ can be written as a decomposable tensor $l_1\otimes l_2^{\vee}$.
 Then we can define the map
 \begin{equation*}
  \Theta'\colon  M_{\L_1\otimes \L_2^{\vee}}\to Y/F,\ \ \ \ \
  \Theta'(l_1\otimes l_2^{\vee}) = (p, l_1, l_2).
 \end{equation*}
 As above, we can easily see that this is a well-defined map and that $\Theta'\circ\Theta$ and $\Theta\circ\Theta'$ 
 are identity maps of $M_{\L_1\otimes \L_2^{\vee}}$ and $Y/F$. Thus, $\Theta$ defines an isomorphisms of two manifolds. 
\end{proof}
\begin{remark}
 Lemma \ref{lemma_kodaira_type} implies in particular that $M_{\L}\cong M_{\L^{-1}}$.
\end{remark}
Now we can generalize Lemma \ref{lemma_kodaira_type} to the fibered product of $n$ Kodaira type manifold.
\begin{lemma}\label{lemma_fiber_product_of_Kodaira_type}
 If $\L_1$, $\L_2, \dots, \L_n$ are line bundles on $X$, then we have the following isomorphism:
 \begin{equation*}
 \left. \left(M_{\L_1}\times_X M_{\L_2}\times_X\dots \times_X M_{\L_n}\right)\right/F \cong 
  M_{\L_2\otimes\L_1^{\vee}} \times_X \dots M_{\L_{n-1}\otimes\L_1^{\vee}} \times_X M_{\L_n\otimes\L_1^{\vee}}.
 \end{equation*}
 Here we consider the diagonal action of $F= \CC/q^{\ZZ}$ on $M_{\L_1}\times_X M_{\L_2}\times_X\dots \times_X M_{\L_n}$.
\end{lemma}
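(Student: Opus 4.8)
The plan is to generalize the explicit isomorphism constructed in Lemma~\ref{lemma_kodaira_type} from two factors to $n$ factors, using the first line bundle $\L_1$ as a reference. A point of the fibered product $Y=M_{\L_1}\times_X\cdots\times_X M_{\L_n}$ lying over $p\in X$ is a tuple $(p,l_1,\dots,l_n)$, where each $l_i$ is a nonzero vector in the fiber $(\L_i)_p$ taken modulo the action of $q^{\ZZ}$, and the diagonal action of $F=\CC^*/q^{\ZZ}$ rescales all the $l_i$ simultaneously. First I would write down the holomorphic map
\[
\widetilde\Theta(p,l_1,\dots,l_n)=\bigl(p,\ l_2\otimes l_1^{\vee},\ \dots,\ l_n\otimes l_1^{\vee}\bigr),
\]
where $l_1^{\vee}\in(\L_1^{\vee})_p$ is the unique covector with $l_1^{\vee}(l_1)=1$, exactly as in the two-factor case. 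Since $l_j\otimes l_1^{\vee}$ is a nonzero element of $(\L_j\otimes\L_1^{\vee})_p$, the target is precisely the fibered product $M_{\L_2\otimes\L_1^{\vee}}\times_X\cdots\times_X M_{\L_n\otimes\L_1^{\vee}}$ appearing in the statement.

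Next I would verify, coordinate by coordinate, the two invariance properties already established for $n=2$: replacing $l_i$ by $q^{d_i}l_i$ multiplies $l_j\otimes l_1^{\vee}$ by $q^{d_j-d_1}$, so it does not change its class modulo $q^{\ZZ}$; and replacing $(l_1,\dots,l_n)$ by $\lambda\cdot(l_1,\dots,l_n)$ leaves $l_j\otimes l_1^{\vee}=\lambda l_j\otimes\lambda^{-1}l_1^{\vee}$ unchanged. Hence $\widetilde\Theta$ is well defined and $F$-invariant, and factors through a holomorphic map $\Theta\colon Y/F\to M_{\L_2\otimes\L_1^{\vee}}\times_X\cdots\times_X M_{\L_n\otimes\L_1^{\vee}}$. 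To build the inverse, given $(p,m_2,\dots,m_n)$ with $m_j\in(\L_j\otimes\L_1^{\vee})_p$ nonzero, I would pick an arbitrary nonzero $l_1\in(\L_1)_p$, recover $l_j\in(\L_j)_p$ as the contraction of $m_j$ against $l_1$ (the unique vector with $l_j\otimes l_1^{\vee}=m_j$), and send the point to the class of $(p,l_1,\dots,l_n)$ in $Y/F$. The only step requiring genuine care is independence of the choice of $l_1$: rescaling $l_1\mapsto c\,l_1$ forces $l_j\mapsto c\,l_j$ for every $j$ in order to keep each $m_j$ fixed, and this simultaneous rescaling is exactly the diagonal action of $F$, so the resulting point of $Y/F$ does not change. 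A short computation, identical in spirit to the end of the proof of Lemma~\ref{lemma_kodaira_type}, then shows $\Theta'\circ\Theta=\id$ and $\Theta\circ\Theta'=\id$.

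Conceptually there is nothing deeper going on, and I would flag this as the honest assessment of the difficulty: each $M_{\L_i}\to X$ is a principal $F$-bundle, $Y$ is the associated principal $F^n$-bundle, and the statement is the translation, under the group isomorphism $F^n/F_{\mathrm{diag}}\xrightarrow{\sim}F^{n-1}$ given by $(a_1,\dots,a_n)\mapsto(a_2-a_1,\dots,a_n-a_1)$, of the fact that the pairwise difference bundle $[M_{\L_j}]-[M_{\L_1}]$ equals $M_{\L_j\otimes\L_1^{\vee}}$ by Lemma~\ref{lemma_kodaira_type}; the map $\widetilde\Theta$ is just this isomorphism written out explicitly. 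The only real subtlety, and thus the one place I would be careful in the write-up, is that the hypothesis quotients by a single \emph{diagonal} $F$ rather than by $F^n$, and it is precisely this diagonal freedom that makes the reference choice of $l_1$ immaterial; everything else is a routine generalization of the already-checked two-factor calculation.
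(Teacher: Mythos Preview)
Your proof is correct and follows essentially the same idea as the paper's: both send the class of $(p,l_1,\dots,l_n)$ to the tuple of ``ratios'' $l_j/l_1$ and build the inverse by picking a reference $l_1$ and noting that the ambiguity is exactly the diagonal $F$-action. The only organizational difference is that the paper first maps into the intermediate product $(M_2\times_X M_1)/F\times_X\cdots\times_X(M_n\times_X M_1)/F$ and then invokes Lemma~\ref{lemma_kodaira_type} componentwise, whereas you write down the composite map directly; both arguments are equivalent and equally valid.
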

\begin{proof}
 Denote $M_{\L_i}$ by $M_i$ and  consider the following map:
 \begin{align*}
  \Upsilon\colon \left(M_1\times_{X}\dots\times_X M_n\right)/F &\to \left(M_2\times_X M_1\right)/F \times_X \dots  \times_X \left(M_n\times_X M_1\right)/F;\\
  \Upsilon (x,m_1,\dots,m_n) &= ((x,m_2,m_1),\dots,(x,m_n,m_1)).
 \end{align*}
 We can easily see that this map is well-defined. Consider 
 an element $((x,m_2,m_{12}),\dots,(x,m_n,m_{1n}))$ in the right hand side manifold. Here $m_{1i}$ is an element in the fiber of $M_1$ over the point $x$.
 Then there exists a unique  set of elements $f_3,\dots,f_n$ such that
 $f_i\cdot m_{i1} = m_{21}$.
 Then we can define an inverse map:
 \begin{equation*}
  \Upsilon^{-1}((x,m_2,m_{12}),\dots,(x,m_n,m_{1n})) = (x, m_{12}, m_2, f_3\cdot m_3, \dots, f_n\cdot m_n).
 \end{equation*}
 Another easy computation shows that this map is also well-defined. This proves that $\Upsilon$ is an isomorphism. 
 The last necessary observation is that $\left(M_i\times_X M_1\right)/F = M_{\L_i\otimes\L_1^{\vee}}$ by Lemma \ref{lemma_kodaira_type}.
\end{proof}

Lemmas \ref{lemma_kodaira_type} and \ref{lemma_fiber_product_of_Kodaira_type} give us a description of fibers 
of $S^n/F$ over any subvariety. To see it consider a subvariety $Z\subset E^n$ and denote by $L_Z$ the fiber of $S^n$ 
over $Z$:
\begin{equation*}
 \xymatrix{L_Z = (\pi^{n})^{-1}(Z) \ar@{^{(}->}[r] \ar[d]^{\pi^n|_{L_Z}} & S^n \ar[d]^{\pi} \\ Z \ar@{^{(}->}[r] & E^n}
\end{equation*}

Denote by $\L_i$ the line bundle $p_i^*\L$ on $Z$ and by $S_i$ the Kodaira-type manifold $S_i = M_{\left(p_i|_Z\right)^*\L}$ with the base $Z$. 
Then $L_Z$ is isomorphic to a fiber product of  Kodaira-type manifolds:
\begin{equation*}
 L_Z = S_1\times_Z S_2\times_Z \dots\times_Z S_n.
\end{equation*}

We shall need the following result in the future.
{
\begin{lemma}[{\cite[Corollaire 2.9, 2.10]{Varouchas}}]\label{lemma: maps of Kahler manifolds}
	Let $X$ be a K\"ahler manifold, $Y$ be a complex analytic space and $\pi: X\to Y$ be a proper surjective morphism. Assume that one of the following conditions hold:
	\begin{itemize}
		\item The fibers of $\pi$ have the same dimension and either $Y$ is normal, or $\pi$ is flat;
		\item $Y$ has no non-trivial analytic subsets different from divisors (e.g. $Y$ is a surface).
	\end{itemize}
Then $Y$ is K\"ahler.
\end{lemma}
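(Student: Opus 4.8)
The statement is one instance of Varouchas's theorem on stability of the K\"ahler class under proper morphisms, so the plan is to reconstruct the fiber-integration argument that underlies it. First I would unwind what it means for the (possibly singular) space $Y$ to be K\"ahler: it is the data of an open cover $\{V_\alpha\}$ together with smooth strictly plurisubharmonic functions $\rho_\alpha$ on $V_\alpha$ (restrictions of strictly psh functions under local embeddings $V_\alpha\hookrightarrow\CC^{N_\alpha}$) whose pairwise differences $\rho_\alpha-\rho_\beta$ are pluriharmonic on the overlaps. The problem is thus local on $Y$ and amounts to producing such potentials out of the given K\"ahler form $\omega_X$.

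The engine of the proof is fiber integration. Writing $d=\dim X-\dim Y$ for the fiber dimension and using properness of $\pi$, I would push forward the closed positive $(d+1,d+1)$-form $\omega_X^{\,d+1}$ to obtain a closed positive $(1,1)$-current
\[
T=\pi_*\bigl(\omega_X^{\,d+1}\bigr)
\]
on $Y$; since $\pi_*$ is compatible with restriction to opens and commutes with $\partial$ and $\bar\partial$, this $T$ is globally well-defined. By Lelong, $T$ admits local psh potentials $\rho_\alpha$ with $T=\sqrt{-1}\,\partial\bar\partial\rho_\alpha$ on $V_\alpha$, and --- crucially --- the gluing cocycle comes for free: $\sqrt{-1}\,\partial\bar\partial(\rho_\alpha-\rho_\beta)=T-T=0$, so each difference is automatically pluriharmonic. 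Thus the only two properties left to secure are that the $\rho_\alpha$ are \emph{strictly} psh and that they can be taken \emph{smooth}. On the Zariski-open locus $Y^\circ$ where $\pi$ restricts to a submersion between smooth points with fibers of the expected dimension $d$, strict positivity of $T$ is a direct computation: evaluating $T$ on a horizontal lift $\tilde v$ of a tangent vector gives $\int_{\pi^{-1}(y)}\omega_X(\tilde v,\overline{\tilde v})\,\omega_X^{\,d}>0$, since $\omega_X$ is positive on the lift and $\omega_X^{\,d}$ restricts to the fiber volume form.

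It remains to control the behaviour over the bad locus $\Sigma=Y\setminus Y^\circ$, and this is exactly where the two alternative hypotheses come in. The constant-fiber-dimension assumption is what guarantees that no mass escapes into higher-dimensional fibers, so the potentials $\rho_\alpha$ stay bounded above and psh as $y\to\Sigma$; combined with normality of $Y$ (or flatness of $\pi$, which keeps $\Sigma$ from acquiring embedded components) one can invoke a Riemann-type removable-singularity theorem to extend the potentials as psh functions across $\Sigma$ while keeping strict positivity on a full neighbourhood. In the alternative case where $Y$ has no proper analytic subsets other than divisors (for instance $Y$ a surface), the degeneracy locus $\Sigma$ is contained in a discrete set, hence of codimension $\geq 2$, and I would use the classical extension of psh (and of pluriharmonic) functions across analytic sets of codimension $\geq 2$ to push the potentials and their differences across $\Sigma$ with the gluing intact.

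The delicate point --- and the step I expect to be the main obstacle --- is twofold: first, verifying that after extension the potentials remain \emph{strictly} psh at the points of $\Sigma$, rather than merely psh, so that the resulting metric is genuinely K\"ahler and not only semi-positive; and second, upgrading the continuous strictly-psh potentials produced this way to \emph{smooth} ones without destroying the pluriharmonicity of the differences $\rho_\alpha-\rho_\beta$. The latter requires a dedicated regularization theorem for continuous strictly psh functions that respects the pluriharmonic gluing cocycle; I would invoke (or reprove) this smoothing result as the final step, after which the smooth potentials $\rho_\alpha$ and their pluriharmonic differences exhibit the desired K\"ahler metric on $Y$ in both cases.
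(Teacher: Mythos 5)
Note first that the paper does not prove this lemma at all: it is imported verbatim from Varouchas \cite[Corollaire 2.9, 2.10]{Varouchas}, so there is no internal proof to compare against, and what you have written is a reconstruction of Varouchas's own argument. As such, your skeleton is faithful to the source: a K\"ahler structure on the (possibly singular) $Y$ encoded by strictly psh local potentials with pluriharmonic differences, the fiber integration $T=\pi_*\bigl(\omega_X^{\,d+1}\bigr)$, strict positivity over the locus of smooth equidimensional fibers, extension across the bad locus, and a final regularization. The dimension count you omit in the second case is easy to supply: if the fiber-dimension jump locus contained a divisor $D'\subset Y$, then $\pi^{-1}(D')$ would have dimension at least $(\dim Y-1)+(d+1)=\dim X$, forcing $\pi^{-1}(D')=X$ and contradicting surjectivity onto $Y$; hence the jump locus has codimension at least $2$, which under the stated hypothesis on $Y$ makes it at most finite.

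As a standalone proof, however, the proposal has genuine gaps, and they are exactly the two points you flag and then defer. The smoothing of continuous strictly psh potentials compatibly with the pluriharmonic cocycle is not a routine last step: it is Varouchas's glueing/regularization lemma, the technical heart of his work, so ``invoke (or reprove)'' leaves the main content of the theorem unproved. Second, local potentials for $T$ are not ``by Lelong'' when $Y$ is singular: one cannot simply push $T$ into a local embedding $V\hookrightarrow\mathbb{C}^N$, where it acquires bidegree $(N-m+1,N-m+1)$ rather than $(1,1)$, so the local $\partial\bar\partial$-resolution must be done intrinsically on the complex space. Relatedly, you mislocate where the hypotheses enter: equidimensionality together with normality of $Y$ (or flatness of $\pi$) is used to make $\pi$ geometrically flat, i.e.\ to ensure that the fibers form an analytic family of $d$-cycles in the sense of Barlet, which is what makes fiber integration produce \emph{continuous} local potentials --- the indispensable input for the regularization step --- rather than serving in a removable-singularity argument as your sketch suggests. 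In short: the route is the right one and matches the cited proof in outline, but the proposal is a plan that assumes the two hardest lemmas of Varouchas's papers.
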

}

\begin{lemma}\label{lemma_subvar_of_S^n/F_over_curves}
Assume that $X \subset S^n/F$ and $Z=\pi^n(X)$ is a smooth curve in $E^n$. 
 If there exists two indices $i$ and~$j$ such that $\deg(p_i|Z)\neq \deg(p_j|Z)$,
 then $X$ is non-K\"ahler and belongs to the class $\N_1$.
\end{lemma}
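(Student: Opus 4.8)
The plan is to find a surjective morphism from $X$ onto one of the non-Kähler Kodaira-type surfaces supplied by Lemma~\ref{lemma_surfaces_of_Kodaira_type}, and then to transport both properties — membership in $\N_1$ and non-Kählerness — back to $X$ via Lemma~\ref{lemma_properties_of_Nk}(1) and Lemma~\ref{lemma: maps of Kahler manifolds}, respectively.

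First I would set $\L_i=(p_i|_Z)^{*}\L$ and use that $X\subset(\pi_F^n)^{-1}(Z)=L_Z/F$, which by Lemma~\ref{lemma_fiber_product_of_Kodaira_type} is the fibre product over $Z$ of the Kodaira-type surfaces $M_{\L_i\otimes\L_1^{\vee}}$, $i=2,\dots,n$. Since by hypothesis the degrees $\deg(p_1|_Z),\dots,\deg(p_n|_Z)$ are not all equal, some index $k$ satisfies $\deg(p_k|_Z)\neq\deg(p_1|_Z)$; as $\deg_Z(\L_k\otimes\L_1^{\vee})=(\deg(p_k|_Z)-\deg(p_1|_Z))\,c_1(\L)$ and $c_1(\L)\neq0$, the line bundle $\mathcal A:=\L_k\otimes\L_1^{\vee}$ has non-zero degree. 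Hence $G:=M_{\mathcal A}$ is a Kodaira-type surface over the curve $Z$ with $c_1\neq0$, so by Lemma~\ref{lemma_surfaces_of_Kodaira_type} it is non-Kähler and lies in $\N_1$.

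Next I would look at the projection $\rho\colon L_Z/F\to G$ restricted to $X$. Because $\pi_G\circ\rho$ is the structure map to $Z$ and $X\to Z$ is surjective, the image $\rho(X)$ surjects onto $Z$, so $\dim\rho(X)\in\{1,2\}$. The crux is to rule out $\dim\rho(X)=1$: in that case $\rho(X)$ would be a multisection of $\pi_G$. But $G=M_{\mathcal A}$ is a \emph{principal} $F$-bundle over $Z$ (the fibrewise $\CC^{*}$-action descends to a free transitive action of $F=\CC^{*}/q^{\ZZ}$), and its class is the image of $\mathcal A\in\Pic(Z)$ in $H^1(Z,\OOO_Z^{*}/q^{\ZZ})$; the kernel of $\Pic(Z)\to H^1(Z,\OOO_Z^{*}/q^{\ZZ})$ consists of flat, hence degree-zero, line bundles, so $\deg\mathcal A\neq0$ makes $G$ a non-trivial principal bundle with no section. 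Pulling back along the normalisation $g\colon\widetilde W\to Z$ of such a curve would produce a section of $g^{*}G=M_{g^{*}\mathcal A}$, whose bundle has degree $\deg(g)\deg\mathcal A\neq0$ and is therefore again sectionless — a contradiction. Thus $\rho(X)=G$, i.e. $\rho\colon X\to G$ is surjective. This is the step I expect to be the main obstacle, since it is the only point where the twisting of the bundle (rather than mere dimension counting) is used.

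Finally, $\rho\colon X\to G$ is a proper surjective morphism onto the surface $G$ (everything is compact). Lemma~\ref{lemma_properties_of_Nk}(1) then gives $X\in\N_1$. Moreover, were $X$ Kähler, the surface $G$ would have no non-trivial analytic subsets other than divisors, so Lemma~\ref{lemma: maps of Kahler manifolds} would force $G$ to be Kähler, contradicting Lemma~\ref{lemma_surfaces_of_Kodaira_type}; hence $X$ is non-Kähler. This proves the lemma.
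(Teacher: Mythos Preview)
Your proof is correct and follows essentially the same route as the paper: identify $L_Z/F$ via Lemma~\ref{lemma_fiber_product_of_Kodaira_type}, project $X$ onto one factor $G=M_{\L_k\otimes\L_1^{\vee}}$ with $c_1\ne 0$, show this projection is surjective, and then invoke Lemma~\ref{lemma_properties_of_Nk}(1) and Lemma~\ref{lemma: maps of Kahler manifolds}.

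The only genuine difference is in the step you flagged as the main obstacle, namely ruling out $\dim\rho(X)=1$. You argue via the principal $F$-bundle structure: a multisection would yield, after normalisation, a section of $M_{g^{*}\mathcal A}$, forcing $g^{*}\mathcal A$ to be flat and hence of degree zero, contradicting $\deg\mathcal A\ne 0$. The paper does this in one line: by Lemma~\ref{lemma_surfaces_of_Kodaira_type} the surface $G$ lies in $\N_1$, so $a(G)=1$ and $\pi_G\colon G\to Z$ is the algebraic reduction; on a compact surface of algebraic dimension~$1$ every compact curve is contained in a fibre of the algebraic reduction, so no curve in $G$ can surject onto $Z$. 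Your argument is valid but heavier; the paper's is shorter and uses only what has already been established.
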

\begin{proof}  
  We can assume that $i=1$ and $j=2$.
  The fiber $L_{Z}$ of $\pi^n$ over a curve $Z$ is a fibered product of Kodaira type surfaces $S_1,\dots, S_n$:
 \begin{equation*}
  L_{Z}  = S_1\times_{Z}\dots\times_Z S_n = M_{\L_1}\times_{Z}\dots\times_Z M_{\L_n}.
 \end{equation*}
 By Lemma \ref{lemma_fiber_product_of_Kodaira_type} we conclude that $L_{Z}/F\cong M_{(\L_2\otimes \L_1^{\vee})} \times_Z\dots M_{(\L_n\otimes \L_1^{\vee})}$.
 Consider the following composition:
 \begin{equation*}
  X\subset L_{Z}/F\cong M_{(\L_2\otimes \L_1^{\vee})} \times_Z\dots M_{(\L_n\otimes \L_1^{\vee})} \xrightarrow{\mathrm{pr}_1} M_{(\L_2\otimes \L_1^{\vee})},
 \end{equation*}
 where $\mathrm{pr}_1$ is a projection to the first component of the product. 
 By Lemma \ref{lemma_surfaces_of_Kodaira_type} we know that $Z$ is an algebraic reduction of $M_{(\L_2\otimes \L_1^{\vee})}$. 
 Since $X$ maps surjectively to $Z$ the image of $X$ in $M_{(\L_2\otimes \L_1^{\vee})}$ coincides with it. 
 {Since $M_{(\L_2\otimes \L_1^{\vee})}$ is not K\"ahler by Lemma \ref{lemma_surfaces_of_Kodaira_type}, we get by 
 Lemma \ref{lemma: maps of Kahler manifolds} that $X$ is not K\"ahler either}. By Lemma~\ref{lemma_properties_of_Nk} 
 we get that $X\in \N_1$ also.
\end{proof}

Lemma \ref{lemma_subvar_of_S^n/F_over_curves} describes a big class of non-algebraic manifolds inside $S^n/F$. 
Now we are going to show that there are some algebraic varieties in $S^n/F$ which map to curves in $E^n$.

\begin{lemma}\label{lemma_algebraic_fibers_over_curve}
 If $Z$ is a smooth curve in $E^n$ such that $\deg(p_i|Z)= \deg(p_j|Z)$ for all $i$ and $j$, then $L_Z/F$ is K\"ahler. 
 Moreover, if all line bundles $\L_i = (p_i|_Z)^*\L$ are isomorphic, then $L_Z/F$ is an algebraic variety.
\end{lemma}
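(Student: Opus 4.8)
The plan is to reduce the whole statement to the fiber-product description of $L_Z/F$ and then to the elementary theory of degree-$0$ line bundles on a curve. By Lemma~\ref{lemma_fiber_product_of_Kodaira_type} (applied with base $Z$), writing $\L_i=(p_i|_Z)^*\L$, we have
\[
L_Z/F\;\cong\;M_{\L_2\otimes\L_1^\vee}\times_Z\cdots\times_Z M_{\L_n\otimes\L_1^\vee},
\]
so everything is governed by the bundles $\L_i\otimes\L_1^\vee$ on $Z$. The only role of the hypothesis is to control their degrees: since $\deg\L_i=\deg(p_i|_Z)\cdot\deg\L$, the equalities $\deg(p_i|_Z)=\deg(p_j|_Z)$ force $\deg\L_i=\deg\L_j$ for all $i,j$, and hence each $\L_i\otimes\L_1^\vee$ has degree $0$ on the smooth projective curve $Z$.

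The crux is then the following structural fact, which I would use next: a degree-$0$ holomorphic line bundle $\mathcal M$ on $Z$ is holomorphically flat, i.e.\ it is induced by a representation $\rho\colon\pi_1(Z)\to\CC^*$. Writing $\widetilde Z$ for the universal cover and $\bar\rho\colon\pi_1(Z)\to F=\CC^*/q^{\ZZ}$ for the induced homomorphism, this gives a holomorphic identification
\[
M_{\mathcal M}\;\cong\;(\widetilde Z\times F)/\pi_1(Z),
\]
where $\pi_1(Z)$ acts by deck transformations on $\widetilde Z$ and by translations by $\bar\rho(\gamma)$ on $F$. Applying this to each factor, $L_Z/F$ becomes the flat bundle $(\widetilde Z\times F^{n-1})/\pi_1(Z)$, with $\pi_1(Z)$ acting on the complex torus $T:=F^{n-1}$ through translations. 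On $\widetilde Z\times T$ I take the product form $\pr_{\widetilde Z}^*\omega_Z+\pr_T^*\omega_T$, where $\omega_Z$ is a K\"ahler form pulled back from $Z$ (which is projective) and $\omega_T$ is a translation-invariant K\"ahler form on $T$. Since deck transformations preserve the pullback of $\omega_Z$ and translations preserve $\omega_T$, this form is $\pi_1(Z)$-invariant and descends to a K\"ahler form on $L_Z/F$, proving the first assertion.

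The two delicate points are exactly here: the holomorphic flatness of degree-$0$ bundles (which is what makes the suspension picture a \emph{holomorphic} isomorphism, not merely topological), and the invariance of the product form under the translation action; everything else is bookkeeping. It is worth emphasizing that one cannot hope to trivialize the $\L_i\otimes\L_1^\vee$ by a finite cover, since they may have infinite order in $\Pic^0(Z)$, so the K\"ahler metric really must be produced directly as above rather than by descent from a product.

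Finally, for the \emph{Moreover} statement, if all $\L_i$ are isomorphic then each $\L_i\otimes\L_1^\vee\cong\OOO_Z$ is trivial, so $M_{\L_i\otimes\L_1^\vee}\cong Z\times F$ and therefore
\[
L_Z/F\;\cong\;Z\times F^{n-1}.
\]
As $Z$ is a smooth projective curve and $F^{n-1}$ is an abelian variety, this is a product of projective varieties, hence projective, and in particular algebraic. The stated special case $Z=\{(x,\dots,x)\mid x\in E\}$ falls under this, since for the diagonal all the maps $p_i|_Z$ coincide and hence so do the bundles $\L_i$.
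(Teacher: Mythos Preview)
Your argument is correct. The reduction via Lemma~\ref{lemma_fiber_product_of_Kodaira_type} and the treatment of the ``Moreover'' clause are exactly what the paper does. For the K\"ahler assertion, however, you take a genuinely different route.

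The paper simply cites \cite[Section~V.5]{BPV} to assert that each surface $M_{\L_i\otimes\L_1^\vee}$ is a complex torus once $c_1(\L_i\otimes\L_1^\vee)=0$, and concludes that the fiber product is K\"ahler. This is short, but the word ``torus'' is only literally accurate when $Z$ is itself elliptic (which covers the paper's actual applications, e.g.\ the diagonal and the translated antidiagonal); for a curve $Z\subset E^n$ of genus $\geqslant 2$ the factors are K\"ahler elliptic surfaces but certainly not tori. Your approach sidesteps this by invoking the Weil--Atiyah fact that degree-$0$ line bundles on a curve are holomorphically flat, identifying $L_Z/F$ with a suspension $(\widetilde Z\times F^{n-1})/\pi_1(Z)$ where $\pi_1(Z)$ acts by translations on the torus factor, and then writing down an explicit invariant K\"ahler form. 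This is more hands-on but also more robust: it works uniformly for all genera, and it makes transparent why the degree-$0$ hypothesis is exactly what is needed (translations preserve the flat metric on $F^{n-1}$, whereas a nonzero-degree bundle would force a genuine $\CC^*$-scaling with no invariant metric). Your remark that one cannot in general pass to a finite cover to trivialize the $\L_i\otimes\L_1^\vee$ is a nice observation explaining why the explicit construction is the right move.
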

\begin{proof}
 By Lemma \ref{lemma_fiber_product_of_Kodaira_type} the quotient manifold $L_Z/F$ is isomorphic to the following:
 \begin{equation*}
  L_Z/F \cong M_{(\L_2\otimes \L_1^{\vee})} \times_Z\dots \times_Z M_{(\L_n\otimes \L_1^{\vee})}.
 \end{equation*}
 Since $c_1(\L_i\otimes \L_1^{\vee}) = 0$ for all $i$ by \cite[Section V.5]{BPV} we get that $ M_{(\L_i\otimes \L_1^{\vee})}$ are
 complex tori for all $i$. Therefore, $L_Z/F$ is K\"ahler.
 
 If we know that all line bundles $\L_i\otimes\L_1^{\vee}$ are trivial, then by construction  we get
 \begin{equation*}
  M_{\L_i\otimes\L_1^{\vee}} = M_{\OP_Z}\cong Z\times F.
 \end{equation*}
 Then Lemma \ref{lemma_fiber_product_of_Kodaira_type} implies that $L_Z/F \cong Z\times F^{n-1}$ is an algebraic manifold.
\end{proof}
\begin{remark}\label{remark_diagonal}
 Lemma \ref{lemma_algebraic_fibers_over_curve} proves in particular that if we consider a diagonal in $E^n$
 \begin{equation*}
  \Delta = \left\{ (x,x,\dots,x)|\ x\in E\right\}\in E^n,
 \end{equation*}
 then the fiber $L_{\Delta}/F \subset S^n/F$  is an algebraic manifold. 
\end{remark}

Denote by $t_x$ the translation of the abelian variety $E^n$ by the element $x=(x_1,\dots,x_n)\in E^n$:
\begin{equation*}
 t_x \colon E^n\to E^n.
\end{equation*}
Consider an antidiagonal $\nabla = \{(x,-x)|\ x\in E\}$ in the product $E^2$. 
Then depending on the Kodaira surface the fiber over $t_x(\nabla)$ in $S^2/F$ could be algebraic or not algebraic.
\begin{lemma}\label{lemma_antidiagonal}
 If line bundle $\L$ associated to the Kodaira surface $S$ admits a section vanishing in one point then there exists $x = (x_1,x_2)\in E^2$
 such that $L_Z/F\subset S^2/F$ is an algebraic manifold and $Z = t_x(\nabla)$.
\end{lemma}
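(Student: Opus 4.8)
The plan is to use the hypothesis to pin down an explicit representative of $\L$ and then reduce everything to Lemma \ref{lemma_algebraic_fibers_over_curve}. By assumption $\L$ has a section whose zero divisor is supported at a single point $P\in E$; since the degree of this divisor is $c_1(\L)=d$, this means $\L\cong\OP_E(dP)$. I would then take $x=(P,P)$ and set $Z=t_x(\nabla)=\{(P+s,\,P-s)\ :\ s\in E\}$. This $Z$ is the image of the embedding $E\hookrightarrow E^2$, $s\mapsto(P+s,P-s)$, so it is a smooth curve isomorphic to $E$, and I will use $s$ as a coordinate on $Z$.

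Next I would compute the two line bundles entering the description $L_Z=S_1\times_Z S_2$ with $S_i=M_{\L_i}$ and $\L_i=(p_i|_Z)^*\L$. Under the identification $Z\cong E$ the restricted projections are $p_1|_Z\colon s\mapsto P+s$ and $p_2|_Z\colon s\mapsto P-s$; both are \'etale isomorphisms of $E$, and for each of them the fiber over $P$ is the single point $O\in E$. Pulling back the divisor $dP$ therefore gives $\L_1\cong\OP_E(dO)\cong\L_2$. In particular $\L_1$ and $\L_2$ are isomorphic and $\deg(p_1|_Z)=\deg(p_2|_Z)=1$.

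With this in hand the conclusion is immediate: $Z$ is smooth and all the $\L_i$ coincide, so Lemma \ref{lemma_algebraic_fibers_over_curve} applies and yields $L_Z/F\cong Z\times F\cong E\times F$, an algebraic manifold (indeed an abelian surface). This exhibits the desired $x=(x_1,x_2)=(P,P)$ with $Z=t_x(\nabla)$, as required.

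The only real content is the bookkeeping of pullbacks under translation and negation, which I expect to be the main (though routine) obstacle. For a general translate $t_x(\nabla)$ one finds $\L_1\cong\OP_E\big(d(P-x_1)\big)$ and $\L_2\cong\OP_E\big(d(x_2-P)\big)$, so by Abel's theorem $\L_1\cong\L_2$ is equivalent to the torsion condition $d\,(2P-x_1-x_2)=0$ in the group law of $E$. The antidiagonal (rather than the diagonal) is exactly where the two pullbacks, one twisted by translation and one by negation, can be matched, and the choice $x_1=x_2=P$ is simply the most transparent solution of this condition. The hypothesis is used only to provide the symmetric divisor $dP$ that makes the comparison of $\L_1$ and $\L_2$ explicit.
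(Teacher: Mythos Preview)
Your proof is correct and follows essentially the same route as the paper: write $\L\cong\OP_E(dP)$, compute the two pulled-back line bundles on a translate of the antidiagonal, and pick the translate so that they coincide, concluding algebraicity of $L_Z/F$. The only cosmetic differences are that you invoke Lemma~\ref{lemma_algebraic_fibers_over_curve} (the packaged form) where the paper appeals directly to Lemma~\ref{lemma_kodaira_type}, and you choose the particularly clean solution $x_1=x_2=P$ whereas the paper records the general linear condition on $(x_1,x_2)$.
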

\begin{proof}
 By construction the fiber $L_Z$ is isomorphic to the following fibered product of the Kodaira-type manifolds:
 \begin{equation*}
  L_Z = M_{t_{x_1}^*\L}\times_Z M_{t_{x_2}^*\left((-1)^*\L\right)}.
 \end{equation*}
 By assumption, there exists a point $p$ on $E$ such that $\L\sim \OP_E(N\cdot[p])$ for some integer $N$. 
 \begin{equation*}
  (t_{x_1}^*\L)\otimes \left(t_{x_2}^*\left((-1)^*\L\right)\right)^{\vee} = \OP_Z(N\cdot[p-x_1] - N\cdot[-x_2-p]) \cong \OP_Z(N\cdot[2p - x_1+x_2] - N\cdot[0]).
 \end{equation*}
 If we fix $x_1$ and $x_2$ such that $x_1 - x_2 = 2p$ , then this bundle will be trivial. 
 Then by Lemma \ref{lemma_kodaira_type} we get that the fiber $L_Z/F$ is isomorphic to an algebraic manifold:
 \begin{equation*}
 L_Z/F = M_{\OP_Z}  = Z\times F.
 \end{equation*}
\end{proof}

\subsection{Proof of Theorem \ref{thm_classification_for_Sn/F}}
We will need the following assertion.
\begin{lemma}\label{lemma_subvar_of_S2_by_F_surj_on_base}
 {Endow $S^2$ with diagonal action of $F$, and consider the induced commutative diagram
\begin{equation*}
\xymatrix{
	S^2\ar@{>>}[r]^{\alpha}\ar@{>>}[d]_{\pi^2} & S^2/F\ar@{>>}[dl]^{\pi_F^2}\\
	E^2 &
}
\end{equation*}
}If $X \subset S^2/F$ and $\pi^2_F(X) = E^2$, then $X$ coincides with $S^2/F$ and $X\in \N_1$.
\end{lemma}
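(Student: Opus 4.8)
The plan is to argue by a dimension count, the only real content being the exclusion of the case where $X$ is a proper multisection. Since $S=M_{\L}$ is a principal $F$-bundle over $E$, the product $S^2$ is a principal $F\times F$-bundle over $E^2$, and $\alpha$ realizes $\pi^2_F\colon S^2/F\to E^2$ as a principal $F$-bundle whose fibre is the elliptic curve $F\cong (F\times F)/F_{\mathrm{diag}}$; in particular $\dim(S^2/F)=3$ and the fibres of $\pi^2_F$ are one-dimensional. As $X$ is compact and $\pi^2_F|_X$ is proper and surjective onto the surface $E^2$, we get $\dim X\geqslant 2$. If $\dim X=3$ then, since $S^2/F$ is irreducible of dimension $3$ and $X$ is closed, $X=S^2/F$ and there is nothing more to prove. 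So it remains to rule out $\dim X=2$, in which case $f:=\pi^2_F|_X\colon X\to E^2$ is a surjective generically finite morphism of some degree $d\geqslant 1$, i.e.\ $X$ is a $d$-fold multisection.

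The first concrete step is to make the bundle structure explicit. By Lemma~\ref{lemma_kodaira_type} (with base $E^2$), identifying $S^2$ with $M_{p_1^*\L}\times_{E^2}M_{p_2^*\L}$ yields an isomorphism
\[
S^2/F\;\cong\;M_{\mathcal M},\qquad \mathcal M:=p_2^*\L\otimes (p_1^*\L)^{\vee},
\]
so that $S^2/F\to E^2$ is the principal $F$-bundle attached to the line bundle $\mathcal M$, and
\[
c_1(\mathcal M)=p_2^*c_1(\L)-p_1^*c_1(\L)\neq 0
\]
in $H^2(E^2,\mathbb R)$, because $c_1(\L)\neq 0$ and the two terms lie in distinct K\"unneth components.

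Now I would derive a contradiction from the multisection. Pulling the bundle back along $f$ gives $f^*M_{\mathcal M}\cong M_{f^*\mathcal M}$, a principal $F$-bundle over $X$, and the inclusion $X\hookrightarrow S^2/F$ lying over $E^2$ furnishes a tautological holomorphic section $x\mapsto (x,x)$ of this pullback. A holomorphic section trivializes a principal $F$-bundle, so $M_{f^*\mathcal M}$ is trivial; from the exact sequence $1\to q^{\ZZ}\to \OP_X^{*}\to \OP_X^{*}/q^{\ZZ}\to 1$ this forces $[f^*\mathcal M]$ into the image of $H^1(X,\ZZ)\to\Pic(X)$, i.e.\ $f^*\mathcal M$ is flat with monodromy in $q^{\ZZ}$, whence $c_1(f^*\mathcal M)=0$ in $H^2(X,\mathbb R)$. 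On the other hand $c_1(f^*\mathcal M)=f^*c_1(\mathcal M)$, and since $f$ is surjective and generically finite the projection formula gives $f_*f^*=d\cdot\mathrm{id}$, so $f^*$ is injective on $H^2(-,\mathbb R)$; thus $f^*c_1(\mathcal M)\neq 0$, a contradiction. Therefore $\dim X=2$ is impossible and $X=S^2/F$.

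Finally, the membership $X=S^2/F\in\N_1$ is easy: the fibre of $\pi^2_F$ over a smooth curve $Z\subset E^2$ with $\deg(p_1|_Z)\neq\deg(p_2|_Z)$ is a Kodaira-type surface lying in $\N_1$ by Lemma~\ref{lemma_surfaces_of_Kodaira_type} (compare Lemma~\ref{lemma_subvar_of_S^n/F_over_curves}), so $S^2/F$ contains a subspace witnessing $\N_1$. The main obstacle in this scheme is precisely the exclusion of $\dim X=2$: a naive dimension count permits a multisection, and the substantive input is that the nonvanishing of $c_1(\mathcal M)$ obstructs a holomorphic section of the associated $F$-bundle even after the finite base change $f$, which is exactly what eliminates the multisection.
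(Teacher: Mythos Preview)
Your argument is correct and complete, but it takes a different route from the paper's. The paper observes that $\pi^2_F$ is the algebraic reduction of $S^2/F$ (since $\pi^2$ is that of $S^2$ by Lemma~\ref{lemma_alg_red_of_product}, and passing to the $F$-quotient does not change the function field), and then invokes the general fact that for an algebraic reduction every irreducible divisor is vertical; hence no $2$-dimensional $X$ can surject onto $E^2$. For the $\N_1$ claim the paper simply combines Lemmas~\ref{lemma_first_property_Nk} and~\ref{lemma_properties_of_Nk}: $S^2\in\N_2$ and the quotient by $F$ drops one dimension of the generic fibre.

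Your approach is more hands-on: you identify $S^2/F\cong M_{\mathcal M}$ via Lemma~\ref{lemma_kodaira_type}, and rule out a multisection by the topological obstruction $c_1(\mathcal M)\ne 0$ together with injectivity of $f^*$ on real cohomology. This has the advantage of being entirely self-contained within the Kodaira-type formalism already set up in the paper, without appealing to the (unreferenced) fact about divisors and algebraic reductions. The paper's proof, on the other hand, is a one-liner once that fact is granted, and it explains conceptually \emph{why} no horizontal divisor can exist: the function field is already exhausted by $E^2$. One small point: for $f_*f^*=d\cdot\mathrm{id}$ you implicitly need $X$ smooth; if not, pass first to a resolution $\widetilde X\to X$, which is still generically finite over $E^2$ and still carries the tautological section of the pulled-back bundle, so nothing changes.
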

\begin{proof}
 The dimension of $X$ equals either $2$ or $3$. Note that the complex space $S^2/F$ is in the class $\N_1$ by Lemmas \ref{lemma_first_property_Nk} and \ref{lemma_properties_of_Nk}. 
 {Since $\pi^2$ is the algebraic reduction of $S^2$ (see Lemma \ref{lemma_alg_red_of_product}) we get that} $\pi^2_F$ is the algebraic reduction of $S^2/F$, 
 which implies that any divisor $X$ in $S^2/F$ is a preimage of a divisor on $E^2$. But this contradicts to our assumption $\pi_F^2(X)=E^2$, so $\dim(X) = 3$ 
 and $X$ coincides with $S^2/F$.
\end{proof}
\begin{proof}[Proof of Theorem \ref{thm_classification_for_Sn/F}]
 If $Z$ is a smooth curve in $E^n$ and $\deg(p_i|_Z)\neq \deg(p_j|_Z)$ for some $i$ and~$j$, then by 
 Lemma~\ref{lemma_subvar_of_S^n/F_over_curves} the manifold $X$ is non-K\"ahler. Otherwise, by 
 Lemma \ref{lemma_algebraic_fibers_over_curve} the manifold $X$ is K\"ahler. Moreover, if for all $i$ and $j$
 line bundles $(p_i|Z)^*\L\cong (p_j|Z)^*\L$, then by Lemma \ref{lemma_algebraic_fibers_over_curve} the 
 manifold $X$ is algebraic. Remark~\ref{remark_diagonal} and Lemma \ref{lemma_antidiagonal} show situations, 
 when this condition holds.

 If $\dim(Z) \geqslant 2$,  then there exists two indices, say $1$ and $2$, 
 such that $(p_1\times p_2)(Z) = E\times E$.
 Projections to the $i$-th component of the product $P_i\colon S^n \to S$ induce the morphism
 \begin{equation*}
  (P_1\times P_2)_F \colon S^n/F \to S^2/F,
 \end{equation*}
 which maps $X$ to a submanifold $Y$ on $S^2/F$. 
 By the choice of $p_1$ and $p_2$ we get that~$\pi^2_F(Y) = E^2$. 
 By Lemma~\ref{lemma_subvar_of_S2_by_F_surj_on_base} we get {that $Y=S^2/F$ (in particular, it is not K\"ahler) and $Y\in\N_1$.}
 {Then Lemma \ref{lemma_properties_of_Nk} imply that $X$ is in the class $\N_1$}. Take a 
 smooth curve $Z'\subset Z$ such that $\deg(p_1|_{Z'})\neq\deg(p_2|_{Z'})$. As we showed above the intersection $X'= X\cap (\pi^n)^{-1}(Z')$
 is non-K\"ahler. Since it is a submanifold of $X$ this implies that $X$ is non-K\"ahler.
\end{proof}

\subsection{Classification of submanifolds of \texorpdfstring{$Q$}{Q}}
Recall that BG-manifolds $Q$ fit into the following commutative diagram:
\begin{equation}\label{Diagram: BG}
 \xymatrix{
 &W \ar@{^{(}->}[r] \ar[d]^{q} & S^{[n]} \ar[d] \ar[r]^{\delta} & \Sym^n S \ar[d] & S^n \ar[l]_{\alpha} \ar[d]^{r} \\
 Q\ar[rd]_{\Phi}\ar[r]^{p\hspace{10pt}} & W/F \ar[d]^{\Pi} \ar@{^{(}->}[r] & S^{[n]}/F \ar[d] \ar[r]^{\delta_F} 
 & (\Sym^n S)/F\ar[d] & S^n/F\ar[d]^{\pi^n_F} \ar[l]_{\alpha_F} \\
 & \p^{n-1} \ar@{^{(}->}[r] & E^{[n]} \ar[r]^{\cong\hspace{9pt}} & \Sym^n(E) & E^n \ar[l]_{\alpha_E} 
 }
\end{equation}
The vertical arrows here are quotients by the action of $F$ and the horizontal maps in the middle row are induced by the corresponding maps from above.
Denote by $\E\subset Q$ the following preimage of the exceptional locus of $\delta$:
\begin{equation*}
 \E = p^{-1}(q(\Exc(\delta)).
\end{equation*}
The set $\E$ is a divisor of $Q$.
Theorem \ref{thm_classification_for_Sn/F} implies such an assertion on submanifolds of $Q$ outside $\E$.

\begin{proposition}[\bf Theorem B]\label{prop: submanifolds of Q}
\label{thm_classification_for_Q}
 Assume that $X$ is a submanifold of $Q$ and $Z = \Phi(X)$, then:
 \begin{enumerate}
  \item If $Z$ is a point, then $X$ is a subvariety of the fiber $\Phi^{-1}(Z)$, which is an abelian variety for a general point $Z$.
  \item If $Z$ is a smooth curve, then $X$ can be in the class $\N_1$ or Moishezon, depending on $Z$:
  \begin{enumerate}
   \item[$(a)$] If there exists $i$ and $j$ such that $\deg(p_i|_{\widetilde{Z}})\neq \deg(p_j|_{\widetilde{Z}})$, then $X\in \N_1$.
   \item[$(b)$] If for all $i$ and $j$ we have $(p_i|_{\widetilde{Z}})^*\L\cong (p_j|_{\widetilde{Z}})^*\L$ and $X\not\subset \E$, then $X$ is Moishezon.
  \end{enumerate}
  Here by $\widetilde{Z}$ we denote a connected component of the preimage $\alpha_F^{-1}(Z)$.
  \item $\dim(Z) \geqslant 2$ and $X$ belongs to the class $\N_1$.
 \end{enumerate} 
\end{proposition}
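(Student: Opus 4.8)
The plan is to deduce the proposition from the classification of submanifolds of $S^n/F$ in Theorem \ref{thm_classification_for_Sn/F} by transporting a submanifold $X\subset Q$ across the commutative diagram (\ref{Diagram: BG}), along which the relevant horizontal maps are either finite or proper modifications. Concretely, I would first push $X$ forward by the finite morphism $p$ to obtain $X'=p(X)\subset W/F\subset S^{[n]}/F$, with $p|_X\colon X\to X'$ finite surjective and $\dim X=\dim X'$. Away from the exceptional divisor $\E$ the resolution $\delta_F\colon S^{[n]}/F\to(\Sym^n S)/F$ restricts to a bimeromorphic map on $X'$, so I set $X''=\delta_F(X')$; finally I pull $X''$ back by the finite quotient $\alpha_F\colon S^n/F\to(\Sym^n S)/F$ and let $\widetilde X$ be an irreducible component of $\alpha_F^{-1}(X'')$. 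By commutativity of (\ref{Diagram: BG}) one has $\pi^n_F(\widetilde X)=\widetilde Z$, a connected component of the preimage of $Z$ in $E^n$, so $Z$ is a point, a curve, or satisfies $\dim Z\geqslant 2$ exactly when the same holds for $\widetilde Z$, and the numerical invariants $\deg(p_i|_{\widetilde Z})$ and the bundles $(p_i|_{\widetilde Z})^{*}\L$ appearing in the statement are precisely those governing $\widetilde X$ in Theorem \ref{thm_classification_for_Sn/F}.

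For assertion (1), if $Z$ is a point then $\Phi(X)$ is a point and hence $X\subset\Phi^{-1}(Z)$; the claim that this fibre is an abelian variety for general $Z$ is exactly Proposition \ref{lemma_fiber_of_Phi_abelian}, so no further work is needed.

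For assertions (2) and (3) I would apply Theorem \ref{thm_classification_for_Sn/F} to $\widetilde X$ and transfer the conclusion back to $X$. The key point is that membership in $\N_1$ and the Moishezon property are stable along the chain $X\to X'\dashrightarrow X''\leftarrow\widetilde X$: the finite surjections $p|_X$ and $\alpha_F|_{\widetilde X}$ preserve $\N_1$ by Lemma \ref{lemma_properties_of_Nk} (parts (1) and (2) with zero-dimensional general fibre), while the modification $X'\dashrightarrow X''$ preserves both $\N_1$ and Moishezon-ness because the algebraic dimension is a bimeromorphic invariant. Thus in case (3), and in case (2a), where $\widetilde X\in\N_1$ by Theorem \ref{thm_classification_for_Sn/F}, one gets $X''\in\N_1$, then $X'\in\N_1$, and finally $X\in\N_1$; in case (2b), where the bundle condition forces $\widetilde X$ to be algebraic, the hypothesis $X\not\subset\E$ guarantees that $\delta_F$ is an isomorphism along $X'$, so Moishezon-ness propagates from $\widetilde X$ through $X''\cong X'$ to the finite cover $X$.

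The main obstacle I anticipate is purely bookkeeping around the middle of the diagram. The component $\widetilde X$ produced by lifting along $\delta_F$ and $\alpha_F$ need not be smooth, whereas Theorem \ref{thm_classification_for_Sn/F} is phrased for submanifolds; I must therefore either check that its conclusions depend only on the ambient fibre structure $L_{\widetilde Z}/F$ (which is what its proof actually uses) or replace $\widetilde X$ by a suitable resolution. Equally delicate is the precise role of $\E$: one has to verify that, under the hypothesis $X\not\subset\E$, the image $X'$ genuinely avoids the indeterminacy of $\delta_F$, so that $X'\dashrightarrow X''$ is a modification rather than a proper contraction, as this is exactly what makes the algebraic dimension, and hence the Moishezon property, transfer cleanly in case (2b).
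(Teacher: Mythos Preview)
Your proposal is correct and follows essentially the same route as the paper: push $X$ forward along $p$ and $\delta_F$, lift along $\alpha_F$ to obtain $\widetilde X\subset S^n/F$, apply Theorem~\ref{thm_classification_for_Sn/F} to $\widetilde X$, and transfer the conclusion back to $X$ using Lemma~\ref{lemma_properties_of_Nk} together with the bimeromorphic invariance of algebraic dimension. The paper's own proof is considerably terser---it simply sets $\widetilde X$ to be a component of $\alpha_F^{-1}(\delta_F(p(X)))$ and asserts that the $\N_1$ and Moishezon properties propagate---so the bookkeeping issues you flag (smoothness of $\widetilde X$, the precise role of $\E$ in ensuring $\delta_F$ is a modification on $X'$) are genuine subtleties that the paper glosses over rather than points where your argument diverges from it.
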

\begin{proof}
 We use the results of Theorem \ref{thm_classification_for_Sn/F}. Denote by $\widetilde{X}$ a connected component of $\alpha_F^{-1}(\delta_F(p(X)))$. All components of this preimage 
 are isomorphic; thus, we can choose any of them. Set $\widetilde{Z} = \pi_F^n(\widetilde{X})$. 
 If $Z$ is a point, then $X$ is a submanifold of the fiber $\Phi^{-1}(Z)$; for a general $Z$ this is an algebraic (abelian) variety by 
 Proposition \ref{lemma_fiber_of_Phi_abelian}. If $\dim(Z)\geqslant 2$, then $\dim(\widetilde{Z})\geqslant 2$ and $\widetilde{X}$ is a non-K\"ahler manifold in the class $\N_1$. 
 Therefore, $X$ is also in the class $\N_1$ by Lemma~\ref{lemma_properties_of_Nk}. 
 
 Let $Z$ be a curve. Then $\widetilde{Z}$ is a curve and the manifold $\widetilde{X}$ can be in the class $\N_1$ or Moishezon (in fact, algebraic), 
 according to the cases described in Theorem \ref{thm_classification_for_Sn/F} (2). The same, respectively, will hold for $X$. 
\end{proof}

\section{Degenerate fibers of algebraic reductions}\label{sec: degenerate fibers}

 \subsection{Fibers of \texorpdfstring{$\Phi$}{Phi} and \texorpdfstring{$\Pi$}{Pi}} In this section we study singular fibers of the maps to the projective space from manifolds $\Sym^{n}(S)$, $W$, $\Base$ which appear during the Bogomolov construction.
 Denote by $V$ the fiber $\left(\Sigma\circ\Sym^n (\pi)\right)^{-1}(0)$. This is the image $\delta(W)$, see diagram below.
 \begin{equation*}
  \xymatrix{
  Q\ar[drrr]_{\Phi}\ar[r]^{p} & W/F & W\ar[l]_{\hspace{0.3cm}q}\ar[r]^{\delta|_W} \ar[rd]^{\pi^{[n]}} & V \ar[d]^{\Sym^n (\pi)|_V} \ar@{^{(}->}[r] & \Sym^n(S) \ar[d]^{\Sym^n (\pi)} & \\
  & & & \p^{n-1}\cong \Sigma^{-1}(0)  \ar@{^{(}->}[r] & \Sym^n (E) \ar[r]^{\hspace{0.3cm}\Sigma} & E 
  }
 \end{equation*}
 Fix a point $x\in \Sigma^{-1}(0)$ and denote by $Q_x$, $W_x$ and $V_x$ fibers of $\Phi$, $\pi^{[n]}|_W$ and $\Sym^n (\pi)|_V$ over this point.
 Fibers $V_x$ can be easily described:
 \begin{lemma}\label{lemma_fibers_of_V}
  For any fiber $V_x$ there exists a sequence of integers $k_1\geqslant\dots\geqslant k_l>0$ such that $\sum k_i = n$ and
  \begin{equation*}
   V_x = F^{[k_1]}\times F^{[k_2]} \times\dots\times F^{[k_l]}.
  \end{equation*}
 If the point $x = \{x_1,\dots,x_n\}$ is such that $x_i\neq x_j$ for all $i\neq j$, then $l=n$ and $k_1 = \dots = k_n = 1$.
 \end{lemma}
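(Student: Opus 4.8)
The plan is to identify $V_x$ fiber-theoretically by viewing points of $\Sym^n(S)$ as effective $0$-cycles of degree $n$ on $S$ and tracking how the map $\Sym^n(\pi)$ acts on them. First I would note that, since $x\in\Sigma^{-1}(0)$, the condition $\Sigma\circ\Sym^n(\pi)=0$ is automatically satisfied on $(\Sym^n\pi)^{-1}(x)$, so that $V_x=(\Sym^n\pi)^{-1}(x)$; thus it suffices to compute the fibers of $\Sym^n(\pi)\colon\Sym^n(S)\to\Sym^n(E)$ over the point $x$. Under the cycle interpretation, $\Sym^n(\pi)$ is simply the proper pushforward of $0$-cycles along $\pi$, so a cycle $D=\sum_j m_j[p_j]$ of degree $n$ lies in $V_x$ precisely when $\pi_*D=\{x_1,\dots,x_n\}$ as an effective divisor on $E$.

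Next I would organize the combinatorics of $x$. Writing the multiset $\{x_1,\dots,x_n\}$ in terms of its distinct values $y_1,\dots,y_l$ with multiplicities $k_1\geqslant\dots\geqslant k_l>0$ (so $\sum_i k_i=n$), the condition $\pi_*D=\sum_i k_i[y_i]$ forces $D$ to split uniquely as $D=\sum_{i=1}^l D_i$, where each $D_i$ is an effective $0$-cycle of degree $k_i$ supported on the fiber $\pi^{-1}(y_i)$. This bookkeeping yields a natural bijection
\[
V_x\;\cong\;\prod_{i=1}^l\Sym^{k_i}\bigl(\pi^{-1}(y_i)\bigr),
\]
and the key step is to upgrade it to an isomorphism of complex spaces. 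I would do this locally: choosing pairwise disjoint neighbourhoods $U_i\ni y_i$ in $E$, the open subset of $\Sym^n(E)$ consisting of divisors with exactly $k_i$ points in each $U_i$ is canonically isomorphic to $\prod_i\Sym^{k_i}(U_i)$, and its preimage under $\Sym^n(\pi)$ is correspondingly $\prod_i\Sym^{k_i}(\pi^{-1}(U_i))$; restricting to the central fibers over the $y_i$ then yields the isomorphism above.

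Finally I would invoke that $\pi$ is a principal elliptic fibration all of whose fibers are isomorphic to the fixed elliptic curve $F$, so $\pi^{-1}(y_i)\cong F$ for every $i$, together with the fact that for the smooth curve $F$ one has $\Sym^{k}(F)\cong F^{[k]}$ (the same fact used earlier to identify $E^{[n]}\cong\Sym^n E$). Substituting, $V_x\cong F^{[k_1]}\times\dots\times F^{[k_l]}$, as claimed. The last assertion is then immediate: if the $x_i$ are pairwise distinct, every multiplicity equals $1$, so $l=n$ and $k_1=\dots=k_n=1$, giving $V_x\cong F^n$.

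The only genuinely delicate point is the passage from the set-theoretic product decomposition of the fiber to an isomorphism of complex-analytic spaces, which I expect to be the main (though mild) obstacle; it is handled by the local product structure of symmetric products of a curve over disjoint neighbourhoods of the support of $x$, combined with the smoothness of each $\Sym^{k}(F)$.
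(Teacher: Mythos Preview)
Your argument is correct and is exactly the natural one: decompose an effective $0$-cycle in the fiber according to the distinct values of $x$, use the local product structure of symmetric products over disjoint neighbourhoods to get an isomorphism of complex spaces, and then identify $\Sym^{k_i}(\pi^{-1}(y_i))\cong\Sym^{k_i}(F)\cong F^{[k_i]}$. The paper itself offers no proof of this lemma (the authors introduce it with ``Fibers $V_x$ can be easily described''), so your proposal simply spells out what they regard as evident.
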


 Consider the following divisor in $\Sigma^{-1}(0)$:
  \begin{equation}\label{eq_definition_of_D}
   D = \left\{ \{y_1,\dots,y_n\} \in \Sigma^{-1}(0):\ \text{there exist}\ i\neq j \text{ such that } y_i = y_j\right\}.
  \end{equation}

 \begin{lemma}\label{lemma_ind_locus_delta}
  The indeterminancy locus  $(\delta|_{W})^{-1}$ lies in $\Sym^n(\pi)^{-1}(D)$ and for any point $x\in \Sigma^{-1}(0)$
  it intersects~$V_x$ by a proper closed subset.
 \end{lemma}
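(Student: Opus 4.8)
The plan is to identify the indeterminacy locus with (part of) the \emph{big diagonal} of $\Sym^n(S)$ and then read off its trace on each fibre $V_x$ from Lemma~\ref{lemma_fibers_of_V}. Recall that $\delta\colon S^{[n]}\to\Sym^n(S)$ is the Hilbert--Chow morphism, a resolution of singularities which restricts to an isomorphism over the open dense locus of reduced $0$-cycles $\{s_1,\dots,s_n\}$ with pairwise distinct $s_i$. Consequently $(\delta|_W)^{-1}\colon V\dashrightarrow W$ is holomorphic over the complement of this non-reduced locus, and its indeterminacy locus is contained in $V\cap\Delta$, where $\Delta\subset\Sym^n(S)$ denotes the set of $0$-cycles having at least two coincident points. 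This reduces both assertions to statements about $\Delta$.

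For the first assertion I would take a point of $V\cap\Delta$, i.e.\ a cycle $\{s_1,\dots,s_n\}$ with $s_i=s_j$ for some $i\neq j$. Applying $\pi$ componentwise gives $\pi(s_i)=\pi(s_j)$, so the image $\Sym^n(\pi)(\{s_1,\dots,s_n\})=\{\pi(s_1),\dots,\pi(s_n)\}$ has two coincident points and hence lies in the coincidence divisor $D\subset\Sigma^{-1}(0)$ of \eqref{eq_definition_of_D}. Therefore $V\cap\Delta\subseteq\Sym^n(\pi)^{-1}(D)$, and the indeterminacy locus lies in $\Sym^n(\pi)^{-1}(D)$ as claimed.

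For the second assertion I would fix $x\in\Sigma^{-1}(0)$ and use Lemma~\ref{lemma_fibers_of_V} to write $V_x\cong F^{[k_1]}\times\dots\times F^{[k_l]}$, where each factor is the Hilbert scheme of points of the fibre $F$ of $\pi$ over one of the distinct base points occurring in $x$. Since $F$ is a smooth (elliptic) curve, each $F^{[k_i]}\cong\Sym^{k_i}(F)$ is irreducible, so $V_x$ is irreducible, and the non-reduced locus of $F^{[k_i]}$ is empty for $k_i=1$ and a proper closed divisor for $k_i\geqslant 2$. The key geometric point is that fibres of $\pi$ over distinct points of $E$ are disjoint, so a $0$-cycle parametrised by $V_x$ acquires a coincidence \emph{only} within a single factor; thus $V_x\cap\Delta$ decomposes factorwise as the finite union, over $i$, of the products $F^{[k_1]}\times\dots\times(\text{non-reduced locus of }F^{[k_i]})\times\dots\times F^{[k_l]}$. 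Each term is a proper closed subset (its generic point, with all factors reduced, is excluded), so their union is a proper closed subset of the irreducible variety $V_x$, which is exactly the required statement.

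The routine parts are the standard facts about the Hilbert--Chow morphism and about $\Sym^{k}$ of a smooth curve; the one step deserving care is the factorwise decomposition of $V_x\cap\Delta$, which rests on the disjointness of fibres of $\pi$ over distinct base points and is what guarantees properness of the intersection.
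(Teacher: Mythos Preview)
Your proof is correct and follows the same idea as the paper's: both rest on the single observation that $\delta$ is a resolution of singularities and hence an isomorphism over the smooth locus of $\Sym^n(S)$, i.e.\ away from the big diagonal. The paper stops at that one sentence, whereas you carry out the two remaining verifications explicitly---that $V\cap\Delta$ projects into $D$, and that $V_x\cap\Delta$ is proper using the factorwise description from Lemma~\ref{lemma_fibers_of_V}---so your argument is a faithful, more detailed expansion of theirs rather than a different approach.
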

 \begin{proof}
  The map $\delta$ is a resolution of singularities; in particular, it is isomorphism in a neighborhood of a
  smooth point of $\Sym^{n}(S)$. 
 \end{proof}

\begin{lemma}\label{lemma_fibers_of_WF}
 For any point $y\in \Sigma^{-1}(0)\setminus D$ we have~\mbox{$W_y\cong F^n$} and~\mbox{$W_y/F\cong F^{n-1}$.}
 If $y\in D$, then both varieties~$W_y$ and~$W_y/F$ are not birational to abelian varieties.
\end{lemma}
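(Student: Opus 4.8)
The plan is to handle the two cases separately, reducing the harder case $y\in D$ to a statement about uniruledness.

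For $y\in\Sigma^{-1}(0)\setminus D$ the argument is short. By Lemma~\ref{lemma_ind_locus_delta} the indeterminacy of $(\delta|_W)^{-1}$ lies over $D$, so over $V_y$ the morphism $\delta|_W$ is an isomorphism and hence $W_y\cong V_y$. Since $y\notin D$, all multiplicities in Lemma~\ref{lemma_fibers_of_V} equal $1$, so $V_y\cong F^n$, namely the product of the $n$ fibers of $\pi$ through the points of $y$, each a torsor under $F$. Thus $W_y\cong F^n$. Finally the induced $F$-action is the simultaneous (diagonal) translation, and the quotient of a product of $n$ torsors under $F$ by the diagonal $F$ is a torsor under $F^{n-1}$, whence $W_y/F\cong F^{n-1}$.

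For $y\in D$ I would first pass from $W_y$ to $V_y$. By Lemma~\ref{lemma_fibers_of_V} one has $V_y\cong\prod_{i=1}^{l}\Sym^{k_i}F$ with $\sum k_i=n$, and because $y\in D$ at least one $k_i\geqslant 2$; in particular $V_y$ is projective. The Hilbert--Chow morphism $\delta$ restricts to a birational morphism $W_y\to V_y$, an isomorphism over the dense locus of $V_y$ parametrizing reduced cycles (distinct points in each fibre), and this morphism is equivariant for the diagonal translation action of $F$, so it descends to a birational morphism $W_y/F\to V_y/F$. Since uniruledness is a birational invariant and an abelian variety is never uniruled, it suffices to prove that both $V_y$ and $V_y/F$ are uniruled. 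To see this, use the Abel--Jacobi maps $\Sym^{k_i}F\to\Pic^{k_i}(F)\cong F$, which present $V_y$ as a $\prod_i\PP^{k_i-1}$-bundle over $F^l$; as some $k_i\geqslant 2$, a general fibre is a positive-dimensional projective space, so $V_y$ is uniruled. The diagonal translation action of $F$ covers the translation action $t\mapsto(k_1t,\dots,k_lt)$ on $F^l$, which has finite kernel, so it descends to a fibration $V_y/F\to F^l/F$ over an abelian variety of dimension $l-1$ whose general fibre is a finite quotient of $\prod_i\PP^{k_i-1}$, hence unirational and in particular uniruled. Therefore $V_y/F$ is uniruled as well, and both $W_y$ and $W_y/F$ fail to be birational to abelian varieties.

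The main obstacle I anticipate is the passage to the quotient by $F$: one must verify that the translation action does not contract the rational curves coming from the $\PP^{k_i-1}$-fibres, equivalently that the induced action on the base $F^l$ has finite kernel and that a general $F$-orbit meets a fibre in only finitely many points, so that uniruledness genuinely survives in $V_y/F$. Checking that $\delta|_{W_y}$ is birational --- i.e.\ that a general cycle over $y\in D$ is reduced and lies in the locus where $\delta$ is an isomorphism --- is routine but should be stated carefully, and one should note that any extra components of $\delta^{-1}(V_y)$ only make $W_y$ reducible and hence \emph{a fortiori} not birational to an (irreducible) abelian variety.
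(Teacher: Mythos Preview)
Your argument is correct and follows the same architecture as the paper's proof: for $y\notin D$ the two are identical, and for $y\in D$ both reduce to $V_y$ via $\delta$, then use the Abel--Jacobi maps to present $V_y$ as a $\prod_i\PP^{k_i-1}$-bundle over $F^l$ with some $k_i\geqslant 2$.

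The difference lies only in the obstruction invoked. The paper observes that the Albanese dimension of $V_y$ is $l<n$ (and, after noting that every map $F\to\prod_i\mathrm{PGL}(k_i,\CC)$ is constant, that the $\prod_i\PP^{k_i-1}$-fibration descends to $q(W_{y_1})$, giving Albanese dimension $l-1<n-1$). You instead use uniruledness, and for the quotient you compute directly that the induced map $V_y/F\to F^l/F$ has general fibre a finite quotient of $\prod_i\PP^{k_i-1}$. Your treatment of the quotient is arguably more explicit than the paper's one-line $\mathrm{PGL}$ remark; conversely, the Albanese-dimension argument is marginally shorter and sidesteps the orbit/stabiliser bookkeeping you flag as the ``main obstacle''. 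Both invariants are birational and both immediately rule out abelian varieties, so the two routes are interchangeable. Your final caveat about possible extra components of $W_y$ matches exactly what the paper does: it singles out the component $W_{y_1}$ birational to $V_y$ and argues on that one.
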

\begin{proof}
 By \eqref{eq_definition_of_D} a point $y = \{y_1,\dots,y_n\}\in \Sigma^{-1}(0)$ is not inside $D$,
 if for all indices $i\neq j$ we have $y_i\neq y_j$.
 By Lemma \ref{lemma_ind_locus_delta} the fiber $W_y$ is isomorphic to $V_y$. Lemma \ref{lemma_fibers_of_V} implies that $W_y\cong F^n$.
 
 If $y\in D$, then we have an isomorphism  $V_y\cong F^{[k_1]}\times F^{[k_2]} \times\dots\times F^{[k_l]}$, 
 where~$k_1\geqslant 2$. Thus, there is a structure of a fiber space on $V_y$ induced by Abel-Jacobi maps:
 \begin{equation*}
  P\colon V_y\to F^l.
 \end{equation*}
 All fibers of $P$ are isomorphic to $\p^{k_1-1}\times\dots \times\p^{k_l-1}$. This variety is not birational to an abelian variety
 since its Albanese dimension equals $l<n$. 
 
 Since the map $\delta|_W$ is birational and its exceptional locus does not contain fibers of $\pi^{[n]}$,
 we get that  the fiber $W_y$ is a union of several irreducible manifolds $W_{y_1}\cap\dots W_{y_s}$ 
 and $W_{y_1}$ is birational to $V_y$. Thus, $W_{y_1}$ is not birational to an abelian variety.
 
 All maps from the projective elliptic curve $F$ to the affine group $\mathrm{PGL}(k_1,\mathbb{C})\times\dots\times\mathrm{PGL}(k_l,\mathbb{C})$ 
 are constant. Therefore, the structure of $\left(\p^{k_1-1}\times\dots \times\p^{k_l-1}\right)$-fiber space remains on $q(W_{y_1})$. 
 Then the fiber $W_y/F$ contains an irreducible component which is not birational to an abelian variety.
\end{proof}

\begin{lemma}\label{lemma_D_nonrational}
  The subset $D$ is a non-rational divisor in $\Sigma^{-1}(0)$.
 \end{lemma}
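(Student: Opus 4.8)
The plan is to exhibit a dominant rational map from $D$ onto the elliptic curve $E$ and then to invoke the fact that a rational variety admits no non-constant map to a positive-dimensional abelian variety. First I recall that, by Abel's theorem (as already used for $\Sigma^{-1}(0)\cong\p^{n-1}$), the space $\Sigma^{-1}(0)$ parametrizes effective degree-$n$ divisors $\{y_1,\dots,y_n\}$ on $E$ with $y_1+\dots+y_n=0$ in the group law, and by \eqref{eq_definition_of_D} the divisor $D$ is the locus where at least two of the $y_i$ coincide. The generic point of $D$ is a divisor $2p+q_1+\dots+q_{n-2}$ having a \emph{single} double point $p$, with the remaining points pairwise distinct and distinct from $p$; sending such a divisor to its double point defines a rational map
\[
\rho\colon D\dashrightarrow E,\qquad \{y_1,\dots,y_n\}\mapsto p.
\]

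Next I would check that $\rho$ is dominant. Given any $p\in E$, choose $q_1,\dots,q_{n-3}$ freely and set $q_{n-2}=-2p-q_1-\dots-q_{n-3}$; then $2p+q_1+\dots+q_{n-2}$ is an effective degree-$n$ divisor summing to $0$, hence a point of $\Sigma^{-1}(0)$ lying in $D$ and mapping to $p$. Thus $\rho$ surjects onto $E$. Equivalently, $D$ is irreducible and birational to
\[
Y=\bigl\{(p,\xi)\in E\times\Sym^{n-2}E:\ 2p+\sigma(\xi)=0\bigr\}
\]
via $(p,\xi)\mapsto 2p+\xi$, where $\sigma\colon\Sym^{n-2}E\to E$ is the sum map; here $Y$ is the pullback of the isogeny $[2]\colon E\to E$ along $\sigma$, which induces an isomorphism on $H_1$, so $Y$ is connected, and the first projection $Y\to E$ is surjective since $\sigma$ is, reproving the dominance of $\rho$.

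Finally, I would conclude by a standard non-rationality criterion. Passing to a smooth projective model $\widetilde D$ of $D$, the dominant rational map $\rho$ becomes a morphism $\widetilde D\to E$, which factors through the Albanese variety $\mathrm{Alb}(\widetilde D)$. Since $\mathrm{Alb}$ is a birational invariant and $\mathrm{Alb}(\p^{N})=0$, any rational variety has trivial Albanese and hence no non-constant morphism to the positive-dimensional abelian variety $E$. As $\rho$ is non-constant, $\widetilde D$ is not rational, and therefore neither is $D$. (Equivalently, the pullback under $\rho$ of a nonzero holomorphic $1$-form on $E$ yields a nonzero class in $H^0(\widetilde D,\Omega^1_{\widetilde D})$, which already obstructs rationality, as $h^{1,0}$ is a birational invariant vanishing on projective space.)

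The argument is essentially routine once $\rho$ is set up; the only point needing mild care is the description of the generic point of $D$ — that a general non-reduced divisor in $\Sigma^{-1}(0)$ has exactly one double point — which is what makes $\rho$ a genuine rational map rather than a correspondence. This is transparent for $n\geqslant 3$, where $\dim D=n-2$ while the sublocus with two or more coincidences has strictly smaller dimension; the degenerate case $n=2$ (in which $D=E[2]$ is a finite set) is excluded by the standing assumption $n>2$ of the BG construction.
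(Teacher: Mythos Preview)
Your proof is correct and follows essentially the same route as the paper: both construct the birational model $Y\cong\widetilde D=\{(p,\xi)\in E\times\Sym^{n-2}E:\ 2p+\sigma(\xi)=0\}$, observe that the projection to $E$ (with $\PP^{n-3}$-fibers) forces $h^{1,0}=1$ (equivalently, nontrivial Albanese), and conclude non-rationality from the birational invariance of this quantity. The only cosmetic difference is that the paper builds $\widetilde D$ first and then maps it birationally onto $D$, whereas you start from the rational map $\rho\colon D\dashrightarrow E$ and recover $Y$ afterwards; your remark that the sublocus with $\geqslant 2$ coincidences has strictly smaller dimension is exactly what the paper uses implicitly to see that $\iota\colon\widetilde D\to D$ is birational.
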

 \begin{proof}
  Define the following fiber space:
  \begin{equation*}
  \xymatrix{
  \widetilde{D} \ar[d] \ar[r] & E\ar[d]^{\cdot 2}\\
  E^{[n-2]} \ar[r]^{\Sigma} & E
  }
 \end{equation*}
 Any point of $\widetilde{D}$ can be associated with a set $(\{x_1,\dots, x_{n-2}\}, x)$, where $x$ is a point with property:
 \begin{equation*}
  2\cdot x = x_1+\dots+x_{n-2}.
 \end{equation*}
 There is a regular map from $\widetilde{D}$ to $E^{[n]}$:
 \begin{align*}\label{eq_iota}
  \iota\colon \widetilde{D} \to E^{[n]}&&
  \iota(\{x_1,\dots, x_{n-2}\}, x) = \{-x,-x,x_1,\dots,x_{n-2}\}.
 \end{align*}
 The image of the map $\iota$ coincides with $D$. Moreover, the map $\iota$ is birational to the image.
 Therefore, $D$ is birational to $\widetilde{D}$, it is an irreducible subvariety of codimension $1$ in $\Sigma^{-1}(0)$.
 Finally, the Albanese variety of $\widetilde{D}$ equals $E$ since the fibers of the map to $E$ are projective spaces.
 Then we get the following equality:
 \begin{equation*}
  h^{1,0}(\widetilde{D}) = \dim(\mathrm{Alb}(\widetilde{D}) = \dim(E) = 1.
 \end{equation*}
 Since $h^{1,0}$ is a birational invariant of a K\"ahler manifold, we get that $D$ is not rational.
 \end{proof}

 \subsection{\texorpdfstring{$D$}{D} as a dual variety}
 Let $x_0$ be the identity element for the group structure on $E$. If $n\geqslant 3$, then the line bundle $\OP_E(nx_0)$ is
 very ample; denote $W = H^0(E,\OP_E(nx_0))^{\vee}$, it is an $n$-dimensional vector space. Then we have an embedding:
 \begin{equation*}
  E\hookrightarrow \p(W)\cong \p^{n-1} 
 \end{equation*}
 With each hyperplane $H\subset \p^{n-1}$ we can associate a point on a Hilbert scheme of $E$.
 \begin{lemma}\label{lemma_duality}
  If $n\geqslant 3$, then $E$ is not contained in a hyperplane $H\subset \p^{n-1}$. Each $H$ intersects $E$ by the set of $n$ 
  points counted with multiplicities $E\cap H = \{x_1,\dots, x_n\}$; then we have a map:
  \begin{align*}
   \rho\colon \p\left(W^{\vee}\right) \to E^{[n]} \\
   \rho(H)= \{x_1,\dots,x_n\}.
  \end{align*}
 The image of $ \rho$ is contained in $\Sigma^{-1}(0)\subset E^{[n]}$.
  The map $\rho$ induces an isomorphism:
  \begin{equation*}
   D \cong \left \{\ H\subset\PP(W^{\vee}):\ H\ \text{is tangent to}\ E\ \text{in a point}\ x\ \right \}\subset \PP(W^{\vee}).
  \end{equation*}

 \end{lemma}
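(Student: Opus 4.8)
The plan is to recognize $\rho$ as the canonical identification of $\PP(W^{\vee})$ with the complete linear system $|nx_0|$, and then to show that under this identification the divisor $D$ becomes the locus of tangent hyperplanes, i.e.\ the projective dual variety of $E$. I would begin with the nondegeneracy claim. By Riemann--Roch on the genus-one curve $E$ one has $h^0(E,\OP_E(nx_0))=n$ for $n\geq 1$ (the degree is $n>0$, so $h^1=0$), and for $n\geq 3$ the bundle $\OP_E(nx_0)$ is very ample, so the embedding $E\hookrightarrow\PP(W)\cong\PP^{n-1}$ is given by the \emph{complete} linear system. An embedding by a complete linear system is linearly normal and hence nondegenerate: were $E$ contained in a hyperplane $H$, the $n$ functionals spanning $W^{\vee}=H^0(E,\OP_E(nx_0))$ would satisfy a linear relation, contradicting $h^0=\dim W$. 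This gives the first assertion.

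Next I would construct $\rho$ and verify parts (2) and (3) at once. A point of $\PP(W^{\vee})=\PP(H^0(E,\OP_E(nx_0)))$ is a section $s$ up to scalar, whose zero hyperplane $H=\{s=0\}$ cuts out the effective divisor $\operatorname{div}(s)=E\cap H$ of degree $\deg\OP_E(nx_0)=n$; this is the point $\rho(H)\in E^{[n]}\cong\Sym^n E$. On a curve a section is determined up to scalar by its divisor (since $H^0(\OP_E)=\CC$), so $s\mapsto\operatorname{div}(s)$ is exactly the canonical isomorphism $\PP(H^0(\OP_E(nx_0)))\xrightarrow{\sim}|nx_0|$ onto the complete linear system. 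Every divisor in $|nx_0|$ is linearly equivalent to $nx_0$, so by Abel's theorem (already invoked to identify $\Sigma^{-1}(0)$) it satisfies $\sum_i x_i=nx_0=0$; hence $|nx_0|=\Sigma^{-1}(0)$, and $\rho$ is in fact an isomorphism of $\PP(W^{\vee})$ onto $\Sigma^{-1}(0)$. In particular the image of $\rho$ lies in $\Sigma^{-1}(0)$, giving parts (2)--(3).

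For the final assertion I would identify $\rho^{-1}(D)$ with the dual variety. Since $\rho$ is an isomorphism, it restricts to an isomorphism of $\rho^{-1}(D)$ onto $D$, so it is enough to describe $\rho^{-1}(D)$ set-theoretically. By the definition \eqref{eq_definition_of_D}, writing $\rho(H)=\sum_i m_i x_i$, the point $\rho(H)$ lies in $D$ precisely when the hyperplane section $E\cap H$ is non-reduced, i.e.\ some intersection multiplicity satisfies $m_i\geq 2$. This is exactly the condition that $H$ contains the embedded tangent line to $E$ at $x_i$, that is, $H$ is tangent to $E$ at the point $x_i$. Therefore $\rho^{-1}(D)=\{\,H\in\PP(W^{\vee}): H \text{ is tangent to } E \text{ at some point}\,\}$ is the projective dual variety of $E$, and $\rho$ induces the claimed isomorphism.

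The main obstacle I expect is purely book-keeping rather than conceptual: matching the scheme-theoretic intersection multiplicity of $E\cap H$ at a point with the classical notion of tangency, and checking that the chain of canonical identifications $\PP(W^{\vee})=\PP(H^0(\OP_E(nx_0)))=|nx_0|=\Sigma^{-1}(0)$ is compatible, so that ``non-reduced divisor'' on the target corresponds precisely to ``tangent hyperplane'' on the source. Beyond Abel's theorem and the standard theory of complete linear systems there is no genuine geometric difficulty.
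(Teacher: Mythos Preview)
Your proposal is correct. The paper in fact states this lemma without proof, treating it as a standard fact about complete linear systems and dual varieties; your argument supplies precisely the standard justification --- the identification $\PP(W^{\vee})\cong |nx_0|=\Sigma^{-1}(0)$ via Abel's theorem, and the observation that $\rho(H)\in D$ if and only if the hyperplane section $E\cap H$ is non-reduced, i.e.\ $H$ is tangent to $E$.
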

 \begin{example}\label{example_sextic_with_cusps}
 If $n = 3$, then by Lemma \ref{lemma_duality} we see that $D\subset \p\left(W^{\vee}\right)$ is a dual curve to a plane cubic curve $E\subset \p^2$. 
 Thus, $D$ is a plane curve of degree $6$ with $9$ cusps. In case when $n\geqslant 4$ the variety $D$ is a natural generalisation of the dual variety to
 a curve $E$ in the projective space.
 \end{example}
  Now we introduce some notation. For each point $\{x_1,\dots,x_n\}$ in $\Sigma^{-1}(0)\subset E^{[n]}$ denote by $H_{x_1,\dots,x_n}$ a hyperplane in $\p(W)$ such that
  \begin{equation*}
   H_{x_1,\dots,x_n} \cap E = \{x_1,\dots,x_n\}.
  \end{equation*}
  The hyperplane $H_{x_1,\dots,x_n}$ defines a point in $\p\left(W^{\vee}\right)$.
  Fix a subspace $U$ of dimension $n-3$ inside the hyperplane $H_{x_1,\dots,x_n}\subset \p(W)$. 
  Denote by $L$ a line in $\p\left(W^{\vee}\right)$ passing through the point $x$ on $D$ corresponding to $U$:
  \begin{equation*}
   H_{x_1,\dots,x_n}\in L = \left \{ H\in \PP(W^{\vee}):\ U\subset H\ \right \}.
  \end{equation*}
  Now fix a subspace $U$ which does not intersect $E$. 
  Consider the projection from such $U$ to a general line $\p^1\subset \p(W)$:
  \begin{equation*}
   \pi_U\colon \p(W)\dashrightarrow \p^1.
  \end{equation*}
  By the choice of $U$ the restriction $\left.\pi_U\right|_E \colon E\to \p^1$ is a regular finite map of degree $n$. 
  For a point $y\in \p^1$ the following hyperplane $H_y$ corresponds to a point on $L$:
  \begin{equation*}
   H_y = \langle U,y\rangle \in L \subset \p\left(W^{\vee}\right).
  \end{equation*}
 \begin{lemma}\label{lemma_lines_in_dual}
  The choice of a line $L\subset \p\left(W^{\vee}\right)$ is equivalent to a choice of a subspace $U\subset \p(W)$ of codimension $2$.
  With each $U$ which does not intersect $E$ we can associate a regular map 
  \begin{equation*}
   \pi_U\colon E\to \p^{1}
  \end{equation*}
  of degree $n$ and ramification divisor $R_U\subset E$ is of degree $2n$.
 \end{lemma}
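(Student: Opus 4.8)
The plan is to treat the two assertions separately: first the projective-duality correspondence, then the Riemann--Hurwitz count for the resulting projection $\pi_U|_E$.

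For the equivalence between lines in $\p(W^{\vee})$ and codimension-$2$ subspaces of $\p(W)$, I would argue by linear algebra. A point of $\p(W^{\vee})$ is the class of a nonzero functional $\ell\in W^{\vee}$, corresponding to the hyperplane $\{\ell=0\}\subset\p(W)$; a line $L\subset\p(W^{\vee})$ is therefore $\p(\langle\ell_1,\ell_2\rangle)$ for two linearly independent functionals $\ell_1,\ell_2$, and every hyperplane of the resulting pencil $\{\lambda\ell_1+\mu\ell_2=0\}$ contains the common zero locus $U=\{\ell_1=\ell_2=0\}$, a linear subspace of codimension $2$. Conversely, the pencil of all hyperplanes through a fixed codimension-$2$ subspace $U$ is exactly a line in $\p(W^{\vee})$. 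These two assignments are mutually inverse, which gives the claimed bijection.

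Next I would fix $U$ with $U\cap E=\emptyset$ and analyse $\pi_U|_E$. The linear projection $\pi_U\colon\p(W)\dashrightarrow\p^1$ has indeterminacy locus precisely $U$, so disjointness from $E$ makes $\pi_U|_E$ a morphism defined on all of $E$. Its fibre over $y\in\p^1$ is $H_y\cap E$, where $H_y=\langle U,y\rangle$ ranges over the hyperplanes of the pencil; since $E\subset\p^{n-1}$ is embedded by the complete linear system $|\OP_E(nx_0)|$, it is non-degenerate (Lemma \ref{lemma_duality}), so $E\not\subset H_y$ and each hyperplane section is a divisor of degree $n$ on $E$. Hence $\pi_U|_E$ has degree $n$. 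I would then feed this degree-$n$ map of smooth curves into the Riemann--Hurwitz formula: with $g(E)=1$ and $g(\p^1)=0$ one obtains $0=2g(E)-2=n\bigl(2g(\p^1)-2\bigr)+\deg R_U=-2n+\deg R_U$, whence $\deg R_U=2n$.

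None of the steps presents a genuine difficulty; the only point needing a moment of care is the identification of the hyperplane class on $E$ with $\OP_E(nx_0)$, which is what forces each fibre to have length $n$ and thereby drives the whole count. Since this is built into the construction of the embedding, I expect the argument to go through cleanly.
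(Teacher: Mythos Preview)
Your argument is correct and matches the paper's treatment: the paper does not give a separate proof of this lemma, but rather records inline (in the paragraph preceding the statement) the duality between lines in $\p(W^{\vee})$ and codimension-$2$ subspaces of $\p(W)$, together with the observation that $\pi_U|_E$ is regular of degree $n$ when $U\cap E=\emptyset$, leaving the ramification degree $2n$ as an evident consequence of Riemann--Hurwitz. Your write-up simply makes that last step explicit.
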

 \begin{definition}
  Denote by $Z$ the set
  \begin{equation*}
   Z = \left\{H\in \p\left(W^{\vee}\right ) :\ |H\cap E|\leqslant n-2\right \}\subset D\subset\p\left(W^{\vee}\right).
  \end{equation*}
  Here we consider a set-theoretical intersection $H\cap E$. 

 \end{definition}

 \begin{lemma}\label{lemma_general_line_in_dual_space}
 The set $Z$ is a closed and proper subset of \ $D$.
 \end{lemma}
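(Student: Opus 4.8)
The plan is to realize $Z$ as the preimage, under the morphism $\rho\colon\PP(W^{\vee})\to\Sigma^{-1}(0)\subset E^{[n]}$ of Lemma \ref{lemma_duality}, of a ``small diagonal'' locus in the symmetric product, and then to exhibit a single simply tangent hyperplane witnessing that $Z\neq D$. Throughout I use that $\rho$ sends a hyperplane $H$ to the effective degree-$n$ divisor $H\cap E$ regarded as a point of $E^{[n]}\cong\Sym^n E$, and that $\rho$ is holomorphic.

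For closedness I would argue as follows. Inside $\Sym^n E = E^n/\SG_n$ the set of points whose representatives have at most $n-2$ distinct coordinates is the image of a finite union of diagonals in $E^n$; this union is closed and $\SG_n$-invariant, so it descends to a closed subset $\Delta\subset\Sym^n E$ (the quotient by the finite group $\SG_n$ being a closed map). Since $|H\cap E|$ is exactly the number of distinct points in the support of $\rho(H)$, the definition of $Z$ gives $Z=\rho^{-1}(\Delta)$, which is closed because $\rho$ is holomorphic. As $Z\subset D$ by definition, $Z$ is a closed subset of $D$.

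It then remains to show $Z\subsetneq D$, i.e. that a generic tangent hyperplane meets $E$ in exactly $n-1$ distinct points. For this I would invoke the birational parametrization $\iota\colon\widetilde D\to D$ from Lemma \ref{lemma_D_nonrational}, under which a point $(\{x_1,\dots,x_{n-2}\},x)$ of $\widetilde D$ with $2x=x_1+\cdots+x_{n-2}$ maps to the divisor $\{-x,-x,x_1,\dots,x_{n-2}\}$. Because $\widetilde D$ is irreducible and fibers over $E^{[n-2]}$ with projective-space fibers, at a generic point the $n-2$ points $x_1,\dots,x_{n-2}$ are pairwise distinct and also distinct from $-x$. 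Hence the support of $\iota(\{x_1,\dots,x_{n-2}\},x)$ has exactly $n-1$ points, so the corresponding hyperplane lies in $D\setminus Z$. Thus $D\setminus Z$ contains the generic point of the irreducible variety $D$, and $Z$ is a proper closed subset.

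The main obstacle I anticipate is the bookkeeping in the properness step: one must check that the conditions $x_i=x_j$ and $x_i=-x$ carving out the degenerate configurations inside $\widetilde D$ are genuinely \emph{proper} closed subsets, which could fail only if some such coincidence were forced on all of $\widetilde D$. This is precisely where the hypothesis $n\geqslant 3$ and the explicit description of $\widetilde D$ as a projective-space bundle over $E^{[n-2]}$ enter, guaranteeing that a generic point avoids every diagonal.
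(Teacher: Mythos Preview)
Your argument is correct, and both halves are sound. One minor slip: the projection $\widetilde D\to E^{[n-2]}$ has finite fibers (four points, coming from $[2]\colon E\to E$), not projective-space fibers; it is the other projection $\widetilde D\to E$ whose fibers are $\PP^{n-3}$. This does not affect your reasoning, since what you actually use is the irreducibility of $\widetilde D$, which is established in Lemma~\ref{lemma_D_nonrational}.

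The paper takes a more bare-hands route for properness. Rather than invoking the parametrization $\iota$ and arguing by genericity, it simply writes down a single explicit point of $D\setminus Z$: pick $n-3$ pairwise distinct points $x_1,\dots,x_{n-3}\in E$, choose $y\notin\{x_1,\dots,x_{n-3}\}$, and set $z=-2y-\sum x_i$; for generic $y$ one has $z\notin\{x_1,\dots,x_{n-3},y\}$, so $\{x_1,\dots,x_{n-3},y,y,z\}$ lies in $D$ with support of size $n-1$. For closedness the paper says only ``closed by definition''. Your closedness argument via $Z=\rho^{-1}(\Delta)$ is more explicit and arguably cleaner; your properness argument is equivalent in content to the paper's but packages the same witness through $\widetilde D$ and a genericity claim, at the cost of needing irreducibility of $\widetilde D$ as input.
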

 \begin{proof}
  The subset $Z$ is closed by the definition. To prove its properness take $n-3$ pairwise distinct points~\mbox{$x_1,\dots, x_{n-3}\in E$} and choose $y$ 
  such that $y\not \in \{x_1,\dots,x_{n-3}\}$ and
  \begin{equation*}
   z = \left(-2y-\sum_{i=1}^{n-3} x_i\right) \not\in \{x_1,\dots,x_{n-3},y\}
  \end{equation*}
 Then $\{x_1,\dots,x_{n-3},y,y,z\}$ is a point on $D$, but it does not lie in $Z$. Thus, $Z$ does not coincide with $D$. 
 \end{proof}
 Consider a divisor $D$ in $\p^N$ of degree $d$. We say that a point $x\in D$ is of multiplicity $m$, if for a general line $l\subset \p^N$ 
 passing through $x$ we have:
 \begin{equation*}
  |l\cap D| = d-m+1.
 \end{equation*}

 \begin{proposition}\label{prop_singularities_of_D}
  The degree of the divisor $D$ in $\p\left(W^{\vee}\right)$ equals \ $2n$.
  Fix a point $x=\{x_1,\dots,x_n\}\in D$ such that $|\{x_1,\dots,x_n\}| = r$. Then $x$ is a point on $D$ of multiplicity $n - r$.
 \end{proposition}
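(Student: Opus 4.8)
The plan is to reduce both assertions to the geometry of the degree-$n$ branched cover $\pi_U\colon E\to\p^1$ supplied by Lemma~\ref{lemma_lines_in_dual}, exploiting the fact that $L\cap D$ is the branch divisor of $\pi_U$. Recall that a line $L\subset\p(W^{\vee})$ is the same datum as a codimension-$2$ subspace $U\subset\p(W)$, and that when $U\cap E=\varnothing$ the pencil of hyperplanes through $U$ produces a finite map $\pi_U\colon E\to\p^1$ of degree $n$ whose fibre over the point $[H]\in\p^1\cong L$ is the divisor $H\cap E$. The key observation is that $H$ is tangent to $E$, i.e. $[H]\in D$, exactly when $[H]$ is a branch point of $\pi_U$; as divisors on $L\cong\p^1$ the intersection $L\cap D$ coincides with the branch divisor (discriminant) of $\pi_U$.

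For the degree I would take $L$, hence $U$, general, so that $U\cap E=\varnothing$ and every branch point of $\pi_U$ is simple. By Riemann-Hurwitz the ramification divisor has degree $\deg R_U=(2g(E)-2)-n(2g(\p^1)-2)=2n$, as already recorded in Lemma~\ref{lemma_lines_in_dual}. Simple branching makes each branch point a reduced point of the intersection, so $|L\cap D|=2n$ with transverse intersection, and hence $\deg D=2n$.

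For the multiplicity, fix $x=\{x_1,\dots,x_n\}\in D$ with $r$ distinct points and write $H_x\cap E=\sum_{i=1}^{r}m_ip_i$, where $\sum_i m_i=n$. A general line $L$ through $x$ corresponds to a general codimension-$2$ subspace $U\subset H_x$; since $E\not\subset H_x$ by Lemma~\ref{lemma_duality}, the set $E\cap H_x=\{p_1,\dots,p_r\}$ is finite and a general such $U$ avoids it, so $U\cap E=\varnothing$. The fibre of $\pi_U$ over $[H_x]$ is then exactly $H_x\cap E$, so $\pi_U$ is totally ramified of index $m_i$ at each $p_i$; in local coordinates it is modelled on $z\mapsto z^{m_i}$, contributing $m_i-1$ to the branch divisor at $[H_x]$. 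Summing over $i$, the branch divisor has multiplicity $\sum_i(m_i-1)=n-r$ at $[H_x]$, while for general $U$ its remaining $n+r$ branch points are simple. Since $L\cap D$ is this branch divisor, the local intersection number is $(L\cdot D)_x=n-r$, so $|L\cap D|=(n+r)+1=2n-(n-r)+1$ and $x$ has multiplicity $n-r$ on $D$ by definition.

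The delicate point is the identification of $L\cap D$, the pullback of the dual variety to $L$, with the branch divisor of $\pi_U$ as divisors, i.e. with the correct multiplicities at the non-transverse point $[H_x]$; once this is granted the value $n-r$ is just the standard discriminant count $\sum_i(m_i-1)$ for a finite cover of smooth curves, and the genericity of $U$ is only needed to force the remaining branch points to be simple. As a consistency check, in the case $n=3$ a flex of $E\subset\p^2$ has $r=1$ and $m_1=3$, giving multiplicity $2$; these are precisely the nine cusps of the dual sextic exhibited in Example~\ref{example_sextic_with_cusps}.
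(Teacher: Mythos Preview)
Your argument is correct and follows the same route as the paper: both reduce to the branched cover $\pi_U\colon E\to\p^1$ associated to a general line $L$ through $x$, read off $\deg D=2n$ from Riemann--Hurwitz, and obtain the multiplicity $n-r$ by counting $|L\cap D|$ via the ramification contribution $\sum_i(m_i-1)=n-r$ at $x$ against simple branching elsewhere. The only cosmetic difference is that the paper packages the genericity step through the auxiliary locus $Z=\{H:\ |H\cap E|\leqslant n-2\}$ and Lemma~\ref{lemma_general_line_in_dual_space} (so that a general line through $x$ meets $Z$ only at $x$), whereas you assert directly that the remaining branch points are simple for general $U$; given the paper's set-theoretic definition of multiplicity, your flagged ``delicate point'' about the scheme-theoretic identification $L\cap D=\mathrm{Branch}(\pi_U)$ is in fact not needed.
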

 \begin{proof}
 By Lemma \ref{lemma_general_line_in_dual_space} we can choose a line $L$ in $\p\left(W^{\vee}\right)$ which does not intersect $Z$. By Lemma \ref{lemma_lines_in_dual} this  line  $L$ corresponds to a subspace $U\subset \p(W)$. Since $L$ does not intersect $Z$, each hyperplane
 $H$ which contain $U$ is not a bitangent to $E$. Therefore, the ramification divisor $R_U$ of $\pi_U$ consists of $2n$ distinct points.
 By the generality of the choice of $L$ we see that $\deg(D)=\deg(R_U)=2n$ by Lemma \ref{lemma_lines_in_dual}.
 
 Consider a point $x$ as in the assumption of the lemma. 
 If $r=n-1$, then a general line $L$ through~$x$ does not intersect $Z$ by Lemma \ref{lemma_general_line_in_dual_space} and the multiplicity of $x$ equals $1$.
 
 If $r<n-1$, then $x\in Z$. Since $Z$ is of codimension $2$, a general line $L$ passing through $x$ does not intersect $Z$ in any point except $x$.
 Then we conclude that the point $x$ is of multiplicity $n - r$.
 \end{proof}
 
Denote by $\Z_k$ the set of singular points on $D\subset \p\left(W^{\vee}\right)$ of multiplicity $\geqslant k$. These sets give a stratification of $D$:
\begin{equation*}
  \emptyset = \Z_{n} \subsetneq \Z_{n-1} \subsetneq \dots \subsetneq \Z_{2} \subsetneq \Z_{1} = D.
\end{equation*}

\begin{proposition}\label{prop_points_in_D}
 The set $\Z_{n-1}\subset D$ is a finite set of singular points of maximal multiplicity. It does not lie on any hyperplane in~$\p\left(W^{\vee}\right) \cong \Sigma^{-1}(0)$.
\end{proposition}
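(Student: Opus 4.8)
The plan is to determine $\Z_{n-1}$ set-theoretically from the multiplicity formula of Proposition~\ref{prop_singularities_of_D}, and then to prove the non-degeneracy statement by exploiting the action of the $n$-torsion subgroup $E[n]$ on the linear system $|\OP_E(nx_0)|$.

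First I would pin down the stratum. By Proposition~\ref{prop_singularities_of_D} a point $\{x_1,\dots,x_n\}\in D$ whose support consists of $r$ distinct points has multiplicity $n-r$; hence it lies in $\Z_{n-1}$ exactly when $n-r\geqslant n-1$, i.e. $r=1$. Thus $\Z_{n-1}$ consists of the configurations $\{x,\dots,x\}$ of $n$ coincident points lying in $\Sigma^{-1}(0)$, i.e. with $nx=0$ in the group law of $E$. These are precisely the $n^2$ points indexed by the torsion subgroup $E[n]\cong(\ZZ/n\ZZ)^2$, so $\Z_{n-1}$ is finite. Since $\Z_{n}=\emptyset$, the value $n-1$ is the largest multiplicity attained on $D$, and as $n-1\geqslant 2$ these are genuine singular points; this proves the first assertion.

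Next I would translate the non-degeneracy statement into linear algebra on sections. Under the isomorphism $\rho\colon\p(W^{\vee})\xrightarrow{\sim}\Sigma^{-1}(0)$ of Lemma~\ref{lemma_duality}, a point of $\p(W^{\vee})=\p\big(H^0(E,\OP_E(nx_0))\big)$ is a section $s$ up to scalar, and the hyperplane of $\p(W)$ it defines cuts out on $E$ the divisor $\operatorname{div}(s)$. Hence the point of $\Z_{n-1}$ attached to $x\in E[n]$ is the class $[s_x]$ of the (unique up to scalar) section $s_x$ with $\operatorname{div}(s_x)=n\cdot x$. The points of $\Z_{n-1}$ lie on a common hyperplane of $\p(W^{\vee})$ if and only if the vectors $\{s_x:x\in E[n]\}$ fail to span $W^{\vee}=H^0(E,\OP_E(nx_0))$. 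So the proposition reduces to showing that these $n^2$ sections span the $n$-dimensional space $H^0(E,\OP_E(nx_0))$.

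This is the main step, and here the group action enters. For $a\in E[n]=K(\OP_E(nx_0))$ one has $t_a^{*}\OP_E(nx_0)\cong\OP_E(nx_0)$, so translation by $a$ induces a linear automorphism of $H^0(E,\OP_E(nx_0))$, well defined up to scalar; on divisors it sends $n\cdot x$ to the translate $n\cdot(x-a)$, so $t_a^{*}s_x$ is proportional to $s_{x-a}$, and as $a$ runs over $E[n]$ these operators permute the set $\{s_x\}$. Consequently $V'=\langle s_x:x\in E[n]\rangle\subseteq H^0(E,\OP_E(nx_0))$ is invariant under the projective action of $E[n]$. This action lifts to the linear action of the theta group $\mathcal{G}(\OP_E(nx_0))$, whose central $\CC^{*}$ acts by scalars and hence also preserves $V'$; thus $V'$ is a $\mathcal{G}$-submodule. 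Since the natural representation of $\mathcal{G}(\OP_E(nx_0))$ on $H^0(E,\OP_E(nx_0))$ is irreducible \cite{Mumford} and $V'\neq 0$, we conclude $V'=H^0(E,\OP_E(nx_0))$. Equivalently, the intersection $\bigcap_{x\in E[n]}H_x$ of the hyperosculating hyperplanes in $\p(W)$ is an $E[n]$-invariant linear subspace, hence empty, so no point of $\p(W)$ lies on all of them --- which is exactly the assertion that $\Z_{n-1}$ is contained in no hyperplane of $\p(W^{\vee})$.

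The only genuinely non-formal input is the irreducibility of the theta-group representation on the space of sections, equivalently the absence of a proper $E[n]$-invariant linear subspace of $\p(W)$; everything else is bookkeeping. For an elliptic curve this irreducibility is classical and can either be quoted from \cite{Mumford} or checked by hand from the action of the two standard generators of $E[n]$ on the sections $s_x$, so I expect no serious difficulty there.
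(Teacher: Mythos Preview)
Your proof is correct and follows essentially the same approach as the paper: identify $\Z_{n-1}$ with the $n^2$ hyperosculating configurations indexed by $E[n]$, observe that the span of the corresponding lines in $W^{\vee}$ is invariant under the projective action of $E[n]$ lifted to a linear action, and conclude by irreducibility of that representation. The only cosmetic difference is that the paper phrases the lift as the standard representation of the finite Heisenberg group $\H_n$ (citing Hulek) whereas you phrase it as the theta group $\mathcal{G}(\OP_E(nx_0))$ (citing Mumford); these are the same object and the same irreducibility statement.
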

\begin{proof}
 By Lemma \ref{lemma_duality} and Proposition \ref{prop_singularities_of_D} each point of $\Z_{n-1}$ 
 corresponds to a point $H_{x,\dots,x}$ of $\p\left(W^{\vee}\right)$, where $x$ is a point in the group of $n$-torsion $E[n]\cong \ZZ/n\times\ZZ/n$. Thus, 
 \begin{equation*}
  |\Z_{n-1}| = n^2.
 \end{equation*}
 
 There is a standard action of a group of $n$-torsion $E[n]$ on the projective space $\p\left(W^{\vee}\right)$. 
 For each~\mbox{$\xi\in E[n]$} and a point $H_{x_1,\dots,x_n}$ in $\p\left(W^{\vee}\right)$ we have:
 \begin{equation}\label{eq_action_of_En}
  \xi\cdot H_{x_1,\dots,x_n} =  H_{x_1+\xi,\dots,x_n+\xi}.
 \end{equation}
 This action induces the action of $E[n]$ on $\p(W)$ which leaves the curve $E\subset \p(W)$ invariant and acts on it by 
 translation with $n$-torsion points. By \cite[Theorem I.2.5.]{Hulek} this projective action is induced by the linear action of the standard representation of the finite Heisenberg 
 group $\H_n$ on the~\mbox{$n$-di}\-mensional vector space $W$ (for the definition see \cite[Page 11]{Hulek}). 
 
 Denote by $L_x$ a 1-dimensional subspace of $W^{\vee}$ associated  with a 
 point $H_{x,\dots,x}\in \p\left(W^{\vee}\right)$ for a point~\mbox{$x\in E[n]$}. 
 By \eqref{eq_action_of_En} the action of each element of $\H_n$ maps $L_x$ to $L_y$ for some $y\in E[n]$. Thus, there is a subrepresentation of $W^{\vee}$:
 \begin{equation*}
  L = \left\langle L_x\right\rangle_{x\in E[n]} \subset W^{\vee}.
 \end{equation*}
 However the standard representation $W$ of $\H_n$ and its dual $W^{\vee}$ are irreducible. Then $L = W^{\vee}$
 and there is no hyperplane in $\p\left(W^{\vee}\right)$ which contain all points $H_{x,\dots,x}$ for $x\in E[n]$. 
\end{proof}

\section{Automorphisms of \texorpdfstring{$W_F$}{W\_F} and \texorpdfstring{$Q$}{Q}}\label{sec: auto}

The goal of this paragraph is to prove Theorem C. The definition of the Jordan property is given in Section \ref{sec: introduction}.
Here we will need some other definitions.

\begin{definition}
	Let $\Gamma$ be a group.
	\begin{itemize}
		\item We say that $\Gamma$ is {\it bounded} if there is an integer $\Bound=\Bound(G)\in\ZZ_{\geqslant 0}$ such that each finite subgroup $G\subset\Gamma$ satisfies $|G|\leqslant \Bound$.
		\item We say that $\Gamma$ is {\it quasi-bounded} \cite{BandmanZarhin} or {\it has finite subgroups of bounded rank} \cite{PS-Jordan} if there is an integer $A=A(\Gamma)\in\ZZ_{\geqslant 0}$ such that each finite abelian subgroup of $\Gamma$ is generated by at most $A$ elements.
		\item We say that $\Gamma$ is {\it strongly Jordan} \cite{PS-3folds} if it is Jordan and quasi-bounded.
	\end{itemize}
\end{definition}

\begin{lemma}[{\cite[Lemma 4.1]{PS-Kahler}}]\label{lem: PS}
	Let $X$ and $Y$ be compact complex manifolds, and $\phi: X\to Y$ be a surjective morphism with connected fibers. 
	Let $\{G_i\}$ be a countable family of finite subgroups in $\Bim(X)_\phi$. Then there exists a smooth, irreducible and reduced fiber $F$ of the map $\phi$ of dimension $\dim X-\dim Y$ such that all the groups $G_i$ are embedded into $\Bim(F)$. Moreover, if $\dim(Y)>0$ and we are given a countable family $\Xi$ of proper closed analytic subsets in $Y$, then the fiber $F$ can be chosen so that $\phi(F)$ does not lie in $\Xi$.
\end{lemma}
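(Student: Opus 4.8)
The plan is to manufacture the fiber $F$ by choosing its image point $y\in Y$ outside a carefully assembled \emph{countable} union of proper closed analytic subsets, and then to verify that restriction to $F$ produces the claimed embeddings. Write $\mathcal G=\bigcup_i G_i$; since each $G_i$ is finite and there are countably many of them, $\mathcal G$ is a countable subset of $\Bim(X)_\phi$ that is closed under inverses (each $G_i$ being a group). The mechanism underlying the whole argument is that an irreducible compact complex manifold is a Baire space, hence is \emph{not} a countable union of nowhere dense subsets; in particular a point $y$ avoiding countably many proper closed analytic subsets of $Y$ always exists. First I would record the good locus of $\phi$: by generic flatness and generic smoothness there is a proper closed analytic subset $Y_0\subsetneq Y$ such that over $Y\setminus Y_0$ the morphism $\phi$ is smooth with fibers of pure dimension $d=\dim X-\dim Y$, and since these fibers are connected by hypothesis they are then smooth, irreducible and reduced. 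Thus any $y\notin Y_0$ already yields a fiber of the required type.

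Next I would control each $g\in\mathcal G$. Because $g$ is fiberwise, i.e.\ $\phi\circ g=\phi$ as meromorphic maps, $g$ carries the dense part of any fiber into the same fiber. Let $Z_g\subsetneq X$ be the proper closed analytic subset off which both $g$ and $g^{-1}$ are biholomorphic (the union of the indeterminacy and exceptional loci of $g$ and $g^{-1}$). If $F$ is irreducible and $F\not\subseteq Z_g$, then $F\cap Z_g$ is nowhere dense in $F$ and $g$ restricts to a bimeromorphic self-map $g|_F\colon F\dashrightarrow F$. The decisive point is the claim that $B_g=\{\,y\in Y:\phi^{-1}(y)\subseteq Z_g\,\}$ is contained in a proper closed analytic subset of $Y$. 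To prove it I would decompose $Z_g$ into irreducible components. A component $Z'$ dominating $Y$ satisfies $\dim Z'\leqslant\dim X-1$, so its general fiber over $Y$ has dimension $\dim Z'-\dim Y\leqslant d-1$; by upper semicontinuity of fiber dimension the locus $\{\,y:\dim(\phi|_{Z'})^{-1}(y)\geqslant d\,\}$ is then a proper closed analytic subset, and off it the irreducible $d$-dimensional fiber $\phi^{-1}(y)$ cannot lie in $Z'$. A component $Z'$ not dominating $Y$ has $\overline{\phi(Z')}\subsetneq Y$ by Remmert's proper mapping theorem, and this proper closed set absorbs its contribution to $B_g$. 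Hence $B_g$ is contained in a finite union of proper closed analytic subsets, which (as $Y$ is irreducible) is again proper and closed analytic.

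Finally I would secure injectivity of the restriction homomorphisms. For a nontrivial $g\in\mathcal G$ its fixed locus $\Fix(g)\subsetneq X$ is proper closed analytic, since a bimeromorphic self-map that is the identity on a dense open set is the identity on the irreducible $X$; the identical dimension count shows that $C_g=\{\,y:\phi^{-1}(y)\subseteq\Fix(g)\,\}$ also lies in a proper closed analytic subset of $Y$. I would then choose $y$ avoiding the countable family consisting of $Y_0$, all the sets $B_g$ for $g\in\mathcal G$, all the sets $C_g$ for nontrivial $g\in\mathcal G$, and, for the last assertion, the given family $\Xi$; such $y$ exists by the Baire argument above. For this $y$ the fiber $F=\phi^{-1}(y)$ is smooth, irreducible and reduced of dimension $d$; each $g\in\mathcal G$ restricts to $g|_F\in\Bim(F)$; restriction is a homomorphism $G_i\to\Bim(F)$ because all the relevant compositions agree on the dense open locus of $F$ where every map is biholomorphic; and it is injective because $g|_F=\id_F$ would force $F\subseteq\Fix(g)$, which is excluded by $y\notin C_g$. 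For the moreover part, $y\notin\bigcup\Xi$ precisely says that $\phi(F)=\{y\}$ lies in no member of $\Xi$.

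I expect the claim about $B_g$ (equivalently $C_g$) to be the main obstacle: it is what upgrades the merely \emph{generic} good behaviour of a single bimeromorphic map to a statement about \emph{entire} fibers, and it is exactly here that one must separate dominating from non-dominating components of $Z_g$ and invoke the generic constancy of fiber dimension. Everything else reduces to the Baire principle and the bookkeeping of countably many proper analytic subsets.
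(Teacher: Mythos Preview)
The paper does not give its own proof of this lemma: it is quoted verbatim from \cite[Lemma~4.1]{PS-Kahler} and used as a black box in the proofs of Propositions~\ref{prop: Aut is Jordan} and~\ref{prop: bim commutes}. So there is nothing in the present paper to compare your argument against.

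That said, your proposal is the standard proof of this statement and is essentially correct. The three ingredients --- (i) generic smoothness of $\phi$ to produce the bad locus $Y_0$, (ii) for each $g$ the analysis of the components of the indeterminacy/exceptional locus $Z_g$ (dominating vs.\ non-dominating) to show $B_g$ lies in a proper analytic subset, and (iii) the same argument applied to the fixed locus to guarantee injectivity --- together with the Baire category argument on $Y$, are exactly how Prokhorov and Shramov establish the result in the cited reference. Your identification of the ``main obstacle'' is accurate: the passage from a generic statement about $g$ to a statement about entire fibers is precisely the content of the upper-semicontinuity/Remmert step, and you have handled it correctly.
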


For a surjective morphism of compact complex manifolds $\phi:X\to Y$ we denote by $\Bim(X)_\phi$ the subgroup 
of $\Bim(X)$ which consists of bimeromorphic selfmaps whose action is fiberwise with respect to $\phi$. Also set
\[
\Aut(X)_\phi=\Aut(X)\cap\Bim(X)_\phi.
\]

\begin{proposition}\label{prop: Aut is Jordan}
	Let $Q$ be a BG-manifold of dimension $2n-2$. Then the group $\Aut(Q)$ is strongly Jordan.
\end{proposition}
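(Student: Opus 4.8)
The plan is to exploit the action of $\Aut(Q)$ on the algebraic reduction $\Phi\colon Q\to\PP^{n-1}$ (Corollary~\ref{corollary_alg_red_of_Q}). Since $\Phi$ is canonical, every $g\in\Aut(Q)$ permutes the fibres of $\Phi$; as the general fibre is connected (Proposition~\ref{lemma_fiber_of_Phi_abelian}) and $\PP^{n-1}$ is projective, $g$ descends to a biregular automorphism $\bar g\in\Aut(\PP^{n-1})=\PGL_n(\CC)$. This produces a homomorphism $\rho\colon\Aut(Q)\to\PGL_n(\CC)$ fitting into a short exact sequence
\[
1\longrightarrow\Aut(Q)_\Phi\longrightarrow\Aut(Q)\overset{\rho}{\longrightarrow}\Delta\longrightarrow 1,\qquad \Delta=\rho(\Aut(Q))\subseteq\PGL_n(\CC).
\]
I would then prove that both outer terms are strongly Jordan and combine them.

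For the kernel $\Aut(Q)_\Phi$, the main tool is Lemma~\ref{lem: PS}: any countable family of finite subgroups of $\Bim(Q)_\Phi\supseteq\Aut(Q)_\Phi$ embeds into $\Bim(\Phi^{-1}(z))$ for a suitable general point $z$, where $\Phi^{-1}(z)\cong F^{n-1}$ is an abelian variety by Proposition~\ref{lemma_fiber_of_Phi_abelian}. Being non-uniruled and minimal, this fibre satisfies $\Bim(\Phi^{-1}(z))=\Aut(\Phi^{-1}(z))=F^{n-1}\rtimes\Aut_0(F^{n-1})$; its finite subgroups are controlled, since the translation part is abelian of rank $\leqslant 2(n-1)$ and the linear part $\Aut_0(F^{n-1})\subseteq\GL_{2(n-1)}(\ZZ)$ has finite subgroups of bounded order by Minkowski's theorem. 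Hence $\Aut(\Phi^{-1}(z))$ is strongly Jordan. A standard contradiction argument then upgrades this: a hypothetical sequence of finite subgroups violating the Jordan or the quasi-bounded bound constitutes a countable family which Lemma~\ref{lem: PS} embeds into the strongly Jordan group $\Aut(\Phi^{-1}(z))$, a contradiction; so $\Aut(Q)_\Phi$ is strongly Jordan.

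The delicate point is the image $\Delta$, and here I would use the degenerate-fibre analysis of Section~\ref{sec: degenerate fibers}. The set $\Z_{n-1}$ of $n^2$ maximal-multiplicity points of the discriminant of $\Phi$ is intrinsically attached to $Q$, so $\Delta$ permutes it; by Proposition~\ref{prop_points_in_D} these points lie on no hyperplane, i.e. they span $\PP^{n-1}$. This gives a homomorphism $\Delta\to\SG_{n^2}$ whose kernel fixes a spanning set of points pointwise and is therefore contained in a maximal torus of $\PGL_n(\CC)$, showing $\Delta$ is at least virtually abelian. To finish cleanly I would strengthen this to \emph{finiteness} of $\Delta$: the configuration $\Z_{n-1}$ carries the $E[n]$-structure arising from the standard Heisenberg representation of $\H_n$ (proof of Proposition~\ref{prop_points_in_D}), so any element of $\Delta$ normalises the image of $\H_n$ in $\PGL_n(\CC)$, which is a finite group; equivalently, the $n^2$ points are in sufficiently general position that fixing them forces the identity. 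Thus $\Delta$ is bounded.

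Finally I would combine the pieces. Given that $\Aut(Q)_\Phi$ is strongly Jordan and $\Delta$ is bounded, the conclusion follows from the standard fact that a strongly-Jordan-by-bounded group is strongly Jordan: for a finite $G\subseteq\Aut(Q)$ one sets $N=G\cap\Aut(Q)_\Phi$ of index $\leqslant|\Delta|$, chooses an abelian $A\trianglelefteq N$ of bounded index, and intersects its $\leqslant[G:N]$ conjugates to obtain an abelian subgroup normal in $G$ of index bounded in terms of $\Jord(\Aut(Q)_\Phi)$ and $|\Delta|$; quasi-boundedness is inherited in the same way. I expect the boundedness (indeed finiteness) of $\Delta$ to be the main obstacle: without it one is left with an extension of an abelian bounded-rank group by a group containing a nontrivial torus, and such extensions can in general produce Heisenberg-type finite subgroups of unbounded index, defeating the Jordan property. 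It is precisely the rigidity of the Heisenberg configuration $\Z_{n-1}$ that rules this out and makes the statement true.
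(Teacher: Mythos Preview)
Your proposal is correct and follows essentially the same route as the paper: the short exact sequence coming from the algebraic reduction, Lemma~\ref{lem: PS} together with the abelian-variety structure of the general fibre for the kernel, and the spanning configuration $\Z_{n-1}$ from Proposition~\ref{prop_points_in_D} for the image, followed by the ``finite-by-strongly-Jordan'' step.

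Two small remarks on presentation. First, the paper does not leave the invariance of $\Z_{n-1}$ (equivalently of $D$) to the phrase ``intrinsically attached to $Q$''; it routes through Proposition~\ref{prop: bim commutes}, whose proof in turn relies on Lemma~\ref{lemma_fibers_of_WF} to distinguish the fibres over $D$ bimeromorphically from the generic abelian fibre. Your sketch would be strengthened by making this explicit, since ``$\Z_{n-1}$ is the worst stratum of the discriminant of $\Phi$'' is only meaningful once one knows the discriminant of $\Phi$ really is $D$. Second, you are actually more careful than the paper about the finiteness of $\Delta$: the paper simply asserts that a subgroup of $\PGL_n$ preserving a finite spanning set is finite, whereas you correctly note that the kernel of $\Delta\to\SG_{n^2}$ lands in a torus and then invoke the Heisenberg structure. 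Either way one arrives at $\Delta$ finite, and the conclusion is the same.
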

\begin{proof}
	By Proposition \ref{prop: bim commutes}, there is a short exact sequence of groups
	\[
	1\to\Aut(Q)_\Phi\to\Aut(Q)\to\Xi_D\to 1,
	\]
	where $\Xi_D\subset\Aut(\PP^{n-1},D)$, the latter group being the group of projective automorphisms preserving $D$. 
	This is a finite group, because by Proposition \ref{prop_points_in_D} it preserves a finite subset of $n^2$ points $\Z_{n-1}\subset D$, which spans $\PP^{n-1}$.
	As in \cite[Corollary 4.2]{PS-Kahler} we note that the group $\Aut(Q)_\Phi$ is strongly Jordan. Namely, assuming the contrary, we use Lemma \ref{lem: PS} to deduce that the group $\Aut(F)$ is not strongly Jordan, where $F$ is a general fiber of $\Phi$. However, we know from Proposition \ref{lemma_fiber_of_Phi_abelian} that $F$ is an abelian variety, so
	\begin{equation}\label{eq: abelian variety auto}
	\Aut(F)\cong F(\CC)\rtimes\Gamma,
	\end{equation}
	where $\Gamma\subset\GL_{2\dim F}(\ZZ)$. By Minkowski's theorem, the latter group is bounded. Therefore, $\Aut(F)$ is strongly Jordan. This is a contradiction. 
	
	Finally, it is easy to see that an extension of a finite group by a strongly Jordan group is again strongly Jordan. 
\end{proof}

\begin{remark}
	Note that every finite subgroup $G\subset\Aut(Q)$ fits into exact sequence
		\[
		1\to G_F\to G\to G_D\to 1,
		\]
		where $G_F$ embeds into $\Aut(F)$ and $G_D$ embeds into $\SG_{n^2}$. By Chermak-Delgado theorem, $G_F$ contains a characteristic abelian subgroup of index at most $|\Gamma|^2$. Therefore, $\Jord(\Aut(Q))\leqslant |\Gamma|^2\cdot|\SG_{n^2}|$. By the results of S. Friedland and W. Feit \cite{Friedland}, one has $|\Gamma|\leqslant |\Ort_{2\dim F}(\ZZ)|=2^{2\dim F}(2\dim F)!$ for $2\dim F>10$ (the remaining cases are somewhat sporadic, see the references therein). Thus, putting $d=\dim Q=2\dim F=2n-2$ we can get a rough estimate
		\[
		\Jord(\Aut(Q))\leqslant 4^d(d!)^2[(d/2+1)^2]!= 2^{4n-4}[(2n-2)!]^2(n^2)!
		\]
		for any BG manifold $Q$ of dimension $d>10$.
\end{remark}

We finish this section by deriving some consequences about the structure of $\Bim(Q)$.

\begin{proposition}\label{prop: bim commutes} 
	Let $Q$ be a BG manifold of dimension $2n-2$ and $W_F$ be its base. Then
	\begin{enumerate}
		\item Any $\varphi\in\Bim(W_F)$ induces $\varphi_0\in \Bim(\p^{n-1})$ 
		such that $\Pi\circ\varphi = \varphi_0\circ\Pi$ and $\varphi_0(D)=D$. Further, there exists the integer $M$ such that
		for any  $\varphi\in\Bim(Q)$ its power~$\varphi^M$ induces~\mbox{$\varphi_0\in \Bim(\p^{n-1})$} and $\varphi_0(D)  = D$.
		\item There exists an exact sequence of groups
		\begin{equation}\label{eq: Bim exact sequence}
			1\to\Bim(Q)_\Phi\to\Bim(Q)\to\Delta\to 1,
		\end{equation}
		where $\Delta$ is a subgroup of the Cremona group $\Bim(\PP^{n-1})\cong\Bir(\PP^{n-1})$. Moreover, the group $\Bim(Q)_\Phi$ is strongly Jordan, and the group $\Bim(Q)$ has finite subgroups of bounded rank.
	\end{enumerate}
	
\end{proposition}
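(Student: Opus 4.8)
The plan is to deduce both parts from the functoriality of the algebraic reduction. By Corollary~\ref{corollary_alg_red_of_Q} and Proposition~\ref{proposition_k_W/F}, the maps $\Phi$ and $\Pi$ are the algebraic reductions of $Q$ and $W_F$, so $\Mer(Q)\cong\Mer(W_F)\cong\CC(\PP^{n-1})$. Since the algebraic reduction is canonical up to bimeromorphic equivalence of the base, every $\varphi\in\Bim(Q)$ acts on $\Mer(Q)$ and hence induces $\varphi_0\in\Bir(\PP^{n-1})=\Bim(\PP^{n-1})$ with $\Phi\circ\varphi=\varphi_0\circ\Phi$. The assignment $\varphi\mapsto\varphi_0$ is a group homomorphism; its image is by definition $\Delta$, and its kernel consists exactly of the fiberwise maps $\Bim(Q)_\Phi$. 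This already produces the exact sequence $1\to\Bim(Q)_\Phi\to\Bim(Q)\to\Delta\to 1$ of part (2), and the same reasoning applies to $W_F$ with $\Pi$.

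To obtain $\varphi_0(D)=D$ I would argue that $D$ is intrinsic to the fibration. By Lemma~\ref{lemma_fibers_of_WF} the fibers of $\Pi$ over $\PP^{n-1}\setminus D$ are abelian varieties $F^{n-1}$, whereas those over $D$ are not even birational to abelian varieties; the same dichotomy persists for $\Phi$, since by Proposition~\ref{lemma_fiber_of_Phi_abelian} its fibers are finite unramified covers of those of $\Pi$, and such a cover of a variety whose Albanese dimension is $<n-1$ cannot be abelian. As $\varphi$ restricts to bimeromorphisms $Q_x\dashrightarrow Q_{\varphi_0(x)}$ between general fibers, and being birational to an abelian variety is a bimeromorphic invariant, $\varphi_0$ must preserve the locus $\PP^{n-1}\setminus D$ of abelian fibers, whence $\varphi_0(D)=D$. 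For $\varphi\in\Bim(W_F)$ this applies directly. For $\varphi\in\Bim(Q)$ the finite cover $p\colon Q\to W_F$ of degree $n^2$ introduces a deck/monodromy ambiguity that one removes by passing to a uniform power $\varphi^M$, with $M$ independent of $\varphi$. I expect this descent step — reconciling a bimeromorphic self-map of the cover $Q$ with the intrinsic discriminant $D$ living downstairs on $\PP^{n-1}$ — to be the main technical obstacle of the statement.

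Next I would establish that $\Bim(Q)_\Phi$ is strongly Jordan, following \cite[Corollary~4.2]{PS-Kahler}. Assume the contrary. Then there is a countable family of finite subgroups of $\Bim(Q)_\Phi$ witnessing the failure, and by Lemma~\ref{lem: PS} they embed simultaneously into $\Bim(F)$ for a suitable general fiber $F$ of $\Phi$. By Proposition~\ref{lemma_fiber_of_Phi_abelian} the fiber $F$ is an abelian variety, so every bimeromorphic self-map of $F$ is biregular and $\Bim(F)=\Aut(F)\cong F(\CC)\rtimes\Gamma$ with $\Gamma\subset\GL_{2\dim F}(\ZZ)$. By Minkowski's theorem $\Gamma$ is bounded, while finite abelian subgroups of $F(\CC)$ have rank at most $2\dim F$; hence $\Aut(F)$ is strongly Jordan, a contradiction.

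Finally, for the quasi-boundedness of $\Bim(Q)$ I would again use the exact sequence. Let $G\subset\Bim(Q)$ be a finite abelian subgroup. Its intersection $G\cap\Bim(Q)_\Phi$ embeds into $\Aut(F)$ and so is generated by a bounded number of elements by the previous paragraph, while its image in $\Delta\subset\Bir(\PP^{n-1})$ has rank bounded independently of $G$, because $\PP^{n-1}$ is rationally connected and finite abelian subgroups of $\Bir(\PP^{n-1})$ have bounded rank by Prokhorov--Shramov \cite{PS-Jordan}. Adding the two bounds yields a uniform bound on the rank of $G$, so $\Bim(Q)$ has finite subgroups of bounded rank. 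As reflected in the subsequent \emph{Question}, this argument does not by itself deliver the full Jordan property for $\Bim(Q)$, the missing ingredient being Jordanness of the image $\Delta$ together with control of the extension.
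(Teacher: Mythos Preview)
Your treatment of part~(2) and of the $W_F$ half of part~(1) matches the paper's proof essentially verbatim: the exact sequence comes from functoriality of the algebraic reduction, strong Jordanness of $\Bim(Q)_\Phi$ from Lemma~\ref{lem: PS} applied to an abelian general fiber, and quasi-boundedness of $\Bim(Q)$ from quasi-boundedness of $\Aut(F)$ and of $\Bir(\PP^{n-1})$.

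There is, however, a genuine gap in your direct argument for the $Q$ half of part~(1). You claim that the dichotomy of Lemma~\ref{lemma_fibers_of_WF} persists for $\Phi$ because ``a finite unramified cover of a variety whose Albanese dimension is $<n-1$ cannot be abelian''. This is false: an abelian surface \'etale-covers a bielliptic surface, which has Albanese dimension~$1$. So even granting that $p$ is \'etale over the $D$-fibers (which Proposition~\ref{lemma_fiber_of_Phi_abelian} asserts only for a \emph{generic} point), low Albanese dimension downstairs does not prevent the cover from being abelian, and your argument does not show that $D$ is intrinsic to $\Phi$. If your direct argument worked, incidentally, no power $M$ would be needed at all.

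The paper sidesteps this by exactly the descent you allude to but do not carry out. For general $x$, both $Q_x$ and $(W_F)_x$ are abelian varieties and $Q_x\to (W_F)_x$ is an isogeny; the restriction $\varphi_x\colon Q_x\to Q_{\varphi_0(x)}$ is then a regular map of abelian varieties, and a uniform power $\varphi^M$ descends through these isogenies to a bimeromorphic self-map of $W_F$. One then invokes the $W_F$ case, where Lemma~\ref{lemma_fibers_of_WF} applies directly. So the mechanism behind $M$ is the fiberwise isogeny structure, not a global deck-transformation argument on $p\colon Q\to W_F$.
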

\begin{proof}
	(1) Consider a bi\-me\-ro\-morphic automorphism $\varphi$ of $Q$ and denote by $\varphi_x$ the restriction of $\varphi$
	to a fiber $Q_x$ which is not in the exceptional or indeterminancy locus:
	\begin{equation*}
	\varphi_x\colon Q_x \dashrightarrow Q_y.
	\end{equation*}
	For a general point $x$ both manifolds $Q_x$ and $Q_y$ are abelian; thus, $\varphi_x$ is regular.
	Moreover, $Q_x$ and $Q_y$ are finite covers of abelian varieties $(W_F)_x$ and $(W_F)_y$ and these 
	finite covers induces an isogeny. Therefore, some degree $M$ of $\varphi_x$ induces a morphism between $(W_F)_x$ and $(W_F)_y$
	and $\varphi^M$ induces a bi\-me\-ro\-morphic automorphism of $W_F$. Then it suffices to
	show that any automorphism of $W_F$ fixes a divisor $D$.
	
	Consider a bimeromorphic automorphism $\varphi$ of $W_F$. Since $\Pi$ is the algebraic reduction of $W_F$, we get a bimeromorphic map $\varphi_0$ of the projective space which commutes with $\Pi$. Since $\varphi$ is bimeromorphic, there exists open subsets $\U$ and $\V$ in $Q$, such that $\varphi|_U$ is an isomorphism to~$\V$:
	\begin{equation*}
	\varphi|_{\U}\colon \U \overset{\raisebox{0.25ex}{$\sim\hspace{0.2ex}$}}{\smash{\longrightarrow}} \V.
	\end{equation*}
	Denote by $\U_0$ and $\V_0$ the images of $\U$ and $\V$ in $\p^{n-1}$; these are open subsets in $\p^{n-1}$. 
	For each point~$x\in \p^{n-1}$ denote by $\U_x$ and $\V_x$ dense subvarieties of fibers $\Pi^{-1}(x)\cap \U$ and $\Pi^{-1}(x)\cap \V$. Fix a point $x\in \U_0\cap D$, where $D$ is as in \eqref{eq_definition_of_D}. Then $\varphi(\U_x) \cong \V_{\varphi_0(x)}$; in
	particular, the fiber $\V_{\varphi_0(x)}$ is not birational to an abelian variety. By Lemma \ref{lemma_fibers_of_WF} this implies
	that $\varphi_0(x)\in D$.  
	
	(2) The existence of the short exact sequence (\ref{eq: Bim exact sequence}) is clear, since $\Phi$ is the algebraic reduction of $Q$. To see that $\Bim(Q)_\Phi$ is strongly Jordan we note that $\Bim(F)=\Aut(F)$ and use Lemma \ref{lem: PS}. It remains to notice that both $\Aut(F)$ and $\Bir(\PP^{n-1})$ are quasi-bounded (see \cite[Remark 6.9]{PS-Jordan}), hence the same is true for $\Bim(Q)$.
\end{proof}

\begin{remark}\label{rem: boundednes of Aut}
	It would be interesting to investigate the properties of the groups in (\ref{eq: Bim exact sequence}). For example, we still do not know if $\Bim(Q)$ is a Jordan group, or even whether $\Aut(Q)$ and $\Bim(Q)$ are bounded (note that this is indeed the case for {\it hyperk\"ahler} manifolds, see \cite[Theorem 1.4]{Cattaneo} or \cite[Theorem C]{KurnosovYasinsky}).
\end{remark}

\begin{proof}[Proof of Theorem C]
	Claim (1) was proven in Proposition \ref{prop: Aut is Jordan}. Claim (2) is a part of Proposition \ref{prop: bim commutes}.
\end{proof}

\printbibliography


\end{document}